\newcommand{\rphis}[2]{{}_{#1\vphantom{#2}}\phi_{#2\vphantom{#1}}}
\newcommand{\rphisn}[3]{{}_{#1\vphantom{#2}}^{\vphantom{(#3)}}\phi_{#2\vphantom{#1}}^{(#3)}}
\newcommand{\rWs}[2]{{}_{#1\vphantom{#2}}W_{#2\vphantom{#1}}}
\newcommand{\rWsn}[3]{{}_{#1\vphantom{#2}}^{\vphantom{(#3)}}W_{#2\vphantom{#1}}^{(#3)}}
\newcommand{\rphisx}[4]{\rphis{#1}{#2}\left( \begin{array}{c} #3 \end{array};q,#4\right)}
\newcommand{\rphisxk}[5]{\rphisn{#1}{#2}{#3}\left( \begin{array}{c} #4 \end{array};q,#5\right)}
\newcommand{\rphisxkn}[3]{\rphisn{}{}{#1}\left( \begin{array}{c} #2 \end{array};q,#3\right)}
\newcommand{\ditto}{\prime\prime}
\newcounter{xtemp1}
\newcounter{ytemp1}
\newcounter{xtemp2}
\newcounter{ytemp2}
\newcommand{\mput}[2]{\put(#1){\makebox(0,0){#2}}}
\newcommand{\connect}[5]{
 \setcounter{xtemp1}{#1+ (#2-#1) * #4/30}
 \setcounter{xtemp2}{#2+(#1-#2)*#5/30}
 \setcounter{ytemp1}{#3+#4}
 \setcounter{ytemp2}{#3+30-#5}
 \drawline(\arabic{xtemp1},\arabic{ytemp1})(\arabic{xtemp2}, \arabic{ytemp2}) 
}
\DeclareMathOperator{\Stab}{Stab}
\DeclareMathOperator{\Res}{Res}
\numberwithin{equation}{section}
\newtheorem{Theorem}{Theorem}[section]
\newtheorem{Lemma}[Theorem]{Lemma}
\newtheorem{Proposition}[Theorem]{Proposition}
{\theoremstyle{definition}
\newtheorem{Definition}[Theorem]{Definition}
\newtheorem{Remark}[Theorem]{Remark}
}
\begin{document}


\renewcommand{\thefootnote}{$\star$}

\renewcommand{\PaperNumber}{059}

\FirstPageHeading

\ShortArticleName{Basic Hypergeometric Functions as Limits of Elliptic Hypergeometric Functions}

\ArticleName{Basic Hypergeometric Functions\\ as Limits of Elliptic Hypergeometric Functions\footnote{This paper is a contribution to the Proceedings of the Workshop ``Elliptic Integrable Systems, Isomonodromy Problems, and Hypergeometric Functions'' (July 21--25, 2008, MPIM, Bonn, Germany). The full collection
is available at
\href{http://www.emis.de/journals/SIGMA/Elliptic-Integrable-Systems.html}{http://www.emis.de/journals/SIGMA/Elliptic-Integrable-Systems.html}}}

\Author{Fokko J. VAN DE BULT and Eric M. RAINS}

\AuthorNameForHeading{F.J. van de Bult and E.M. Rains}

\Address{MC 253-37, California Institute of Technology, 91125, Pasadena, CA, USA }
\Email{\href{mailto:vdbult@caltech.edu}{vdbult@caltech.edu}, \href{mailto:rains@caltech.edu}{rains@caltech.edu}}

\ArticleDates{Received February 01, 2009; Published online June 10, 2009}

\Abstract{We describe a uniform way of obtaining basic hypergeometric functions as limits of the elliptic beta integral.
This description gives rise to the construction of a polytope with a
dif\/ferent basic hypergeometric function attached to each face of this
polytope. We can subsequently obtain various relations, such as
transformations and three-term relations, of these functions by considering
geometrical properties of this polytope.
The most ge\-ne\-ral functions we describe in this way are sums of two
very-well-poised ${}_{10}\phi_9$'s and their Nassrallah--Rahman type
integral representation.}

\Keywords{elliptic hypergeometric functions, basic hypergeometric functions, transformation formulas}

\Classification{33D15}

\section{Introduction}

Hypergeometric functions have played an important role in mathematics, and
have been much studied since the time of Euler and Gau\ss. One of the goals
of this research has been to obtain hypergeometric identities, such as
evaluation and transformation formulas. Such formulas are of interest due
to representation-theoretical interpretations, as well as their use in
simplifying sums appearing in combinatorics.

In more recent times people have been trying to understand the structure
behind these formulas. In particular people have studied the symmetry
groups associated to certain hypergeometric functions, or the three terms
relations satisf\/ied by them (see \cite{LvdJ1} and \cite{LvdJ}).

Another recent development is the advent of elliptic hypergeometric
functions. This def\/ines a whole new class of hypergeometric functions, in
addition to the ordinary hypergeometric functions and the basic
hypergeometric functions. A nice recent overview of this theory is given in
\cite{Spiress}. For several of the most important kinds of formulas for
classical hypergeometric functions there exist elliptic hypergeometric
analogues. It is well known that one obtains basic hypergeometric functions
upon taking a limit in these elliptic hypergeometric functions. However a
systematic description of all possible limits had not yet been undertaken.

In this article we provide such a description of limits, extending work by
Stokman and the authors \cite{vdBRS}. This description provides some extra
insight into elliptic hypergeometric functions, as it indicates what
relations for elliptic hypergeometric functions correspond to what kinds of
relations for basic hypergeometric functions. Conversely we can now more
easily tell for what kind of relations there have not yet been found proper
elliptic hypergeometric analogues.

More importantly though, this description provides more insight into the
structure of basic hypergeometric functions and their relations, in the
form of a geometrical description of a large class of functions and
relations. All the results for basic hypergeometric functions we obtain
can be shown to be limits of previously known relations satisf\/ied by sums
of two very-well-poised ${}_{10}\phi_9$'s and their Nassrallah--Rahman like
integral representation. However, we would have been unable to place them
in a geometrical picture as we do in this article without considering these
functions as limits of an elliptic hypergeometric function.

In this article we focus on the (higher-order) elliptic beta integral
\cite{evalform}. For any $m\in \mathbb{Z}_{\geq 0}$ the function $E^m(t)$
is def\/ined for $t\in \mathbb{C}^{2m+6}$ satisfying the balancing condition
\[
\prod_{r=0}^{2m+5} t_r = (pq)^{m+1}
\]
by the formula
\[
E^m(t) = \Biggl(\prod_{0\leq r<s\leq 2m+5} (t_rt_s;p,q)\Biggr) \frac{(p;p)(q;q)}{2} \int_{\mathcal{C}} \frac{\prod\limits_{r=0}^{2m+5} \Gamma(t_r z^{\pm 1})}{\Gamma(z^{\pm 2})} \frac{dz}{2\pi i z}.
\]
Here $\Gamma$ denotes the elliptic gamma function and is def\/ined in Section
\ref{secnot}, as are the $(p,q)$-shifted factorials $(x;p,q)$.

Two important results for the elliptic beta integral are the existence of
an evaluation formula for $E^0$
and the fact that $E^1$ is invariant under an action of the Weyl group
$W(E_7)$ of type~$E_7$~\cite{e7trafo}. A more thorough discussion of the
elliptic beta integral is provided in Section~\ref{secellbeta}.

The main result of this paper is the following (see Theorems~\ref{thmlimitsexist}--\ref{thmorthface}), and its analogues for $m=0$, $m>1$.
\begin{Theorem}\label{th1}
Let $P$ denote the convex polytope in $\mathbb{R}^8$ with vertices
\[
e_i+e_j, \quad 0\leq i<j\leq 7, \qquad \frac12 \left(\sum_{r=0}^7 e_r\right) - e_i-e_j, \quad 0\leq i<j\leq 7.
\]
Then for each $\alpha \in P$ the limit
\[
B^1_{\alpha}(u) = \lim_{p\to 0} E^1\big(p^{\alpha_0} u_0, \ldots, p^{\alpha_7} u_7\big)
\]
exists as a function of $u\in \mathbb{C}^8$ satisfying the balancing
condition $\prod u_r = q^{2}$. Moreover, $B^1_{\alpha}$ depends only on
the face of the polytope which contains $\alpha$ and is a
function of the projection of $\log(u)$ to the space orthogonal to that face.
\end{Theorem}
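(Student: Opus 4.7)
\emph{Plan.}
Substitute $t_r=p^{\alpha_r}u_r$, so that the balancing condition $\prod t_r=(pq)^2$ becomes $\sum\alpha_r=2$ and $\prod u_r=q^2$ (consistent with $P$ lying in the hyperplane $\sum\alpha_r=2$), and then carry out a term-by-term asymptotic analysis of the integrand and prefactor of $E^1(p^{\alpha}u)$ as $p\to 0^{+}$. From the product expansion $(x;p,q)=\prod_{i,j\geq 0}(1-xp^iq^j)$ one obtains
\[
\big(p^{\alpha_r+\alpha_s}u_ru_s;p,q\big)\longrightarrow
\begin{cases}(u_ru_s;q)_\infty & \alpha_r+\alpha_s=0,\\ 1 & \alpha_r+\alpha_s>0,\end{cases}
\]
while the factorisation $\Gamma(x;p,q)=\prod_{i,j\geq 0}(1-p^{i+1}q^{j+1}/x)/(1-p^iq^jx)$ yields
\[
\Gamma\big(p^{\alpha_r}u_rz^{\pm 1};p,q\big)\longrightarrow
\begin{cases}(u_rz^{\pm 1};q)_\infty^{-1} & \alpha_r=0,\\ \big(q u_r^{-1}z^{\mp 1};q\big)_\infty & \alpha_r=1,\\ 1 & \alpha_r\in(0,1),\end{cases}
\]
with $1/\Gamma(z^{\pm 2};p,q)\to(z^{\pm 2};q)_\infty$ and $(p;p)\to 1$. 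Cases with $\alpha_r<0$ (arising from the vertices $\frac{1}{2}\sum e_r-e_i-e_j$) or $\alpha_r+\alpha_s<0$ are reduced to the preceding ones by the reflection identity $\Gamma(x;p,q)\Gamma(pq/x;p,q)=1$, which trades each factor for one with positive exponent.

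\emph{Polytope condition and exchange of limit.}
The 56 vertices of $P$ constitute the minuscule orbit of $W(E_7)$ (the Gosset polytope $3_{21}$), and the defining inequalities of $P$ are precisely those guaranteeing that, after applying the reflection identity, the divergent prefactor contributions are exactly balanced by vanishing elliptic gamma contributions, so that the pointwise limit of the integrand is finite and nonzero. The same inequalities allow one to choose a contour $\mathcal{C}$ which, uniformly for small $p$, separates the increasing and decreasing geometric pole sequences of $\prod_r\Gamma(p^{\alpha_r}u_rz^{\pm 1};p,q)$. A $p$-independent integrable majorant then justifies a dominated convergence argument, so the limit may be interchanged with the contour integral; this proves existence of $B^1_\alpha(u)$ and produces an explicit formula in terms of $q$-shifted factorials and a single $z$-integral.

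\emph{Face-dependence, orthogonality, and main obstacle.}
The combinatorial data entering this formula---the set of pairs $(r,s)$ with $\alpha_r+\alpha_s$ at its minimum on $P$, together with the indices $r$ at which $\alpha_r\in\{0,1\}$ (or their reflected analogues)---are constant on the relative interior of any face $F\subseteq P$, so $B^1_\alpha$ depends only on $F$. These same data describe the linear functionals cutting $F$ out of the balancing hyperplane, and each $u_r$ enters the explicit formula precisely through exponentials of those functionals; consequently $B^1_\alpha(u)$ factors through the projection of $\log u$ onto the orthogonal complement $V_F^{\perp}$ of the tangent space to $F$. The main technical obstacle is the uniform pole/contour control as $p\to 0$ on faces containing both vertex types, where divergent prefactors and vanishing gamma factors must be tracked simultaneously; verifying quantitatively that the polytope inequalities orchestrate the exact cancellation, rather than just doing so combinatorially, is the heart of the argument.
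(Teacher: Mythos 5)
Your approach works only on the sub-polytope where all $\alpha_r\in[0,1]$ (the paper's $P_{\rm I}$, Proposition~\ref{proptrivlimit}); on the rest of $P$ it breaks down at the step where you claim the reflection identity $\Gamma(x)\Gamma(pq/x)=1$ reduces the cases with $\alpha_r<0$ to the previous ones and that a fixed contour plus dominated convergence then suffices. The vertices $\frac12\sum_r e_r-e_i-e_j$ have two coordinates equal to $-1/2$, and for any $\alpha$ with some $\alpha_r<0$ the pole sequence $z=u_rp^{\alpha_r}q^k$, which the contour must keep on a fixed side, escapes to infinity as $p\to0$ while its reciprocal sequence collapses to $0$; no $p$-independent contour separates them, and the reflection identity merely moves the offending gamma factor to the denominator without changing this. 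More importantly, on these regions the limit is \emph{not} the integral of the pointwise limit of the integrand: the divergent prefactor $(p^{\alpha_r+\alpha_s}u_ru_s;p,q)$ with $\alpha_r+\alpha_s<0$ is compensated by the integral itself tending to zero, and the finite limit is carried either by an infinite collection of residues of the escaping poles (yielding the \emph{series} of Proposition~\ref{propsum}, e.g.\ sums of two very-well-poised ${}_{10}\phi_9$'s at the vertices $w_{ij}$), or by an integral obtained only after breaking the $z\mapsto z^{-1}$ symmetry of the integrand via the theta identity \eqref{eqtheta} and rescaling $z\to zp^{\beta}$ (Proposition~\ref{propsymbreak}). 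A pointwise analysis of the symmetric integrand on a fixed contour cannot see either mechanism, so the "main technical obstacle" you flag at the end is in fact not addressed by the argument you give.

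There is a second gap in the face-dependence and orthogonality part. Since three qualitatively different limiting procedures are needed, the paper must decompose $P$ into $P_{\rm I}$ and permutations of $P_{\rm II}$, $P_{\rm III}$ (Proposition~\ref{propdecomp}), and two kinds of faces of $P$ (the interior and the facets $\alpha_i+\alpha_j+\alpha_k+\alpha_l=0$) are not faces of any piece of the decomposition; constancy of $B^1_\alpha$ on those faces needs a separate evaluation argument, and there is no single explicit formula valid on all of $P$ from which constancy on faces and the dependence only on the orthogonal projection of $\log(u)$ can be read off. The paper instead derives the orthogonality statement from the iterated limit property (Theorem~\ref{thmiteratedlimits}), which itself requires a residue analysis for limits passing from the integral representations on the boundary facets of $P_{\rm II}$ to the series in its interior; your proposal contains no substitute for this step.
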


\begin{Remark} The polytope $P$ was studied in an unrelated context in \cite{CS},
where it was referred to as the ``Hesse polytope'', as antipodal pairs of
vertices are in natural bijection with the bitangents of a plane quartic
curve.
\end{Remark}

As stated the theorem is rather abstract, but for each point in this
polytope we have an explicit expression of the limit as either a basic
hypergeometric integral, or a basic hypergeometric series, or a product of
$q$-shifted factorials (and sometimes several of these options). A graph
containing all these functions is presented in Appendix \ref{secapp1}. We
also obtain geometrical descriptions of various relations between these
limits $B^1_{\alpha}$.

Note that the vertices of the polytope are given by the roots satisfying $\rho\cdot u=1$
of the root system $R(E_8) = \{ u\in \mathbb{Z}^8 \cup (\mathbb{Z}^8 + \rho)
~|~ u\cdot u=2\}$, where $\rho=\{1/2\}^8$. In particular, the Weyl group
$W(E_7)=\Stab_{W(E_8)}(\rho)$ acts on the polytope in a natural way, which
is consistent with the $W(E_7)$-symmetry of $E^1$. As an immediate
corollary of this $W(E_7)$ invariance we obtain both the symmetries of
the limit $B^1_\alpha$ (determined by the stabilizer in $W(E_7)$ of the
face containing $\alpha$) and transformations relating dif\/ferent limits
(determined by the orbits of the face $\alpha$). Special cases of these
include many formulas found in Appendix III of Gasper and Rahman
\cite{GandR}. For example, they include Bailey's four term transformation
of very-well-poised ${}_{10}\phi_9$'s (as a symmetry of the sum of two
${}_{10}\phi_9$'s), the Nassrallah--Rahman integral representation of a
very-well-poised ${}_8\phi_7$ (as a transformation between two dif\/ferent
limits) and the expression of a very-well-poised ${}_8\phi_7$ in terms of
the sum of two ${}_4\phi_3$'s.

\looseness=1
Three term relations involving the dif\/ferent basic hypergeometric functions
can be obtained as limits of $p$-contiguous relations satisf\/ied by $E^1$
(and geometrically correspond to triples of points in $P$ dif\/fering by
roots of $E_7$), while the $q$-contiguous relations satisf\/ied by $E^1$ reduce to the
($q$-)contiguous relations satisf\/ied by its basic hypergeometric limits. In
particular, we see that these two qualitatively dif\/ferent kinds of formulas
for basic hypergeometric functions are closely related: indeed, they are
dif\/ferent limits of essentially the same elliptic identity!

A similar statement can be made for $E^0$, which leads to evaluation
formulas of its basic hypergeometric limits. Special cases of these
include Bailey's sum for a very-well-poised ${}_8\phi_7$ and the
Askey--Wilson integral evaluation.

We would like to remark that a similar analysis can be performed for
multivariate integrals. In particular the polytopes we obtain here are the
same as the polytopes we get for the multivariate elliptic Selberg
integrals (previously called type $I\!I$ integrals) of
\cite{vDS, vDS2, e7trafo,limits}.
In a future article the authors will also consider the limits of the
(bi-)orthogonal functions of~\cite{e7trafo}, generalizing and
systematizing the $q$-Askey scheme.

\looseness=1
The article is organized as follows. We begin with a small section on
notations, followed by a review of some of the properties of the elliptic
beta integrals. In Section~\ref{seclimits} we will describe the explicit
limits we consider. In Section~\ref{secpolytope} we def\/ine convex
polytopes, each point of which corresponds to a direction in which we can
take a limit. Moreover in this section we prove the main theorems of this
article, describing some basic properties of these basic hypergeometric
limits in terms of geometrical properties of the polytope. In Section~\ref{sectrafo} we harvest by considering the consequences in the case we
know non-trivial transformations of the elliptic beta integral. Section~\ref{Heine} is then devoted to explicitly giving some of these
consequences in an example, on the level of $\rphis{2}{1}$. Section~\ref{seceval} describes some
peculiarities specif\/ic to the evaluation $(E^0)$ case. Finally in Section~\ref{secrq} we consider some remaining questions, in particular focusing on
what happens for limits outside our polytope. The appendices give a graphical
representation of the dif\/ferent limits we obtain and a quick way of
determining what kinds of relations these functions satisfy.

\section{Notation}\label{secnot}

Throughout the article $p$ and $q$ will be complex numbers satisfying
$|p|,|q|<1$, in order to ensure convergence of relevant series and
products. Note that $q$ is generally assumed to be f\/ixed, while~$p$ may
vary.

We use the following notations for $q$-shifted factorials and theta functions:
\[
(x;q) = (x;q)_\infty = \prod_{j=0}^\infty (1-xq^j), \qquad
(x;q)_k = \frac{(x;q)_\infty}{(xq^k;q)_\infty},
\qquad \theta(x;q) = (x,q/x;q),
\]
where in the last equation we used the convention that $(a_1,\ldots,
a_n;q)= \prod_{i=1}^n (a_i;q)$, which we will also apply to gamma
functions. Moreover we will use the shorthand $(xz^{\pm 1};q) =
(xz,xz^{-1};q)$.

Many of the series we obtain as limits are conf\/luent, and in some cases,
highly conf\/luent. To simplify the description of such limits, we will use
a slightly modif\/ied version of the notation for basic hypergeometric series
in \cite{GandR}. In particular we set
\begin{gather*}
\rphisxk{r}{s}{n}{a_1,a_2,\ldots,a_r \\ b_1,b_2,\ldots,b_s}{z} =
\sum_{k=0}^\infty \frac{(a_1,a_2,\ldots,a_r;q)_k}{(q,b_1,b_2,\ldots,b_s;q)_k} z^k \left( (-1)^k q^{\binom{k}{2}} \right)^{n+s+1-r}.
\end{gather*}
In terms of the original ${}_r\phi_s$ from \cite{GandR} this is
\begin{gather*}
\rphisxk{r}{s}{n}{a_1,a_2,\ldots,a_r \\ b_1,b_2,\ldots,b_s}{z} =
 \begin{cases}
\rphisx{r}{s+n}{a_1,a_2,\ldots,a_r \\ b_1,b_2,\ldots,b_s, \underbrace{0,\ldots,0}_n}{z} & \text{ if $n> 0$,} \\
\rphisx{r}{s}{a_1,a_2,\ldots,a_r \\ b_1,b_2,\ldots,b_s}{z} & \text{ if $n= 0$,} \\
\rphisx{r-n}{s}{a_1,a_2,\ldots,a_r,\overbrace{0,\ldots,0}^{-n} \\ b_1,b_2,\ldots,b_s}{z} & \text{ if $n<0$.}
\end{cases}
\end{gather*}
In the case $n=0$ we will of course in general omit the $(0)$, as we then
re-obtain the usual def\/inition of ${}_r\phi_s$. Moreover, when
considering specif\/ic series, we will often omit the $r$ and $s$ from the
notation as they can now be derived by counting the number of parameters.
We also extend the def\/inition of very-well-poised series in this way:
\[
\rWsn{r}{r-1}{n}(a;b_1,\ldots,b_{r-3};q,z) =
\rphisxk{r}{r-1}{n}{a, \pm q \sqrt{a}, b_1,\ldots,b_{r-3} \\
\pm \sqrt{a},aq/b_1,\ldots,aq/b_{r-3}}{z}.
\]
Note, however, that this function cannot be obtained simply by setting some
parameters to 0 in the usual very-well-poised series. Indeed, setting the
parameter $b$ to zero in a very-well-poised series causes the corresponding
parameter $aq/b$ to become inf\/inite, making the limit fail.
For the basic hypergeometric bilateral series we use the usual notation
\[
{}_r\psi_r\left( \begin{array}{c} a_1, \ldots, a_r \\ b_1, \ldots b_r \end{array};q,z \right) =
\sum_{k\in \mathbb{Z}} \frac{(a_1, \ldots, a_r;q)_k}{(b_1, \ldots, b_r;q)_k} z^k.
\]

We def\/ine $p,q$-shifted factorials by setting
\[
(z;p,q) = \prod_{j,k\geq 0} (1-p^j q^k z).
\]
The elliptic gamma function \cite{ellgamma} is def\/ined by
\[
\Gamma(z) = \Gamma(z;p,q)= \frac{(pq/z;p,q)}{(z;p,q)} = \prod_{j,k=0}^\infty \frac{1-p^{j+1}q^{k+1}/z}{1-p^j q^kz} .
\]
We omit the $p$ and $q$ dependence whenever this does not cause confusion.
Note that the elliptic gamma function satisf\/ies the dif\/ference equations
\begin{equation}\label{eqellgammadiff}
\Gamma(qz) = \theta(z;p) \Gamma(z), \qquad \Gamma(pz) = \theta(z;q) \Gamma(z)
\end{equation}
and the ref\/lection equation
\[
\Gamma(z) \Gamma(pq/z)=1.
\]

\section{Elliptic beta integrals}\label{secellbeta}
In this section we introduce the elliptic beta integrals and we recall
their relevant properties. As a generalization of Euler's beta integral
evaluation, the elliptic beta integral was introduced by Spiridonov in
\cite{evalform}. An extension by two more parameters was shown to satisfy a
transformation formula \cite{Spirthi,e7trafo}, corresponding to a symmetry
with respect to the Weyl group of $E_7$. We can generalize the beta
integral by adding even more parameters, but unfortunately not much is
known about these integrals, beyond some quadratic transformation formulas
for $m=2$ \cite{eli} and a transformation to a multivariate integral~\cite{e7trafo}.

\begin{Definition}
Let $m \in \mathbb{Z}_{\geq 0}$. Def\/ine the set $\mathcal{H}_m =\{ z\in
\mathbb{C}^{2m+6} ~|~ \prod_{i} z_i = (pq)^{m+1} \}/ \sim$, where $\sim$ is
the equivalence relation induced by $z\sim -z$. For parameters $t\in
\mathcal{H}_m$ we def\/ine the renormalized elliptic beta integral by
\begin{equation}\label{eqdefem}
E^m(t) =\Bigl(\prod_{0\leq r<s\leq 2m+5} (t_rt_s;p,q)\Bigr)
 \frac{(p;p) (q;q)}{2} \int_{\mathcal{C}} \frac{\prod\limits_{r=0}^{2m+5} \Gamma(t_r z^{\pm 1})}{\Gamma(z^{\pm 2})} \frac{dz}{2\pi i z},
\end{equation}
where the integration contour $\mathcal{C}$ circles once around the origin
in the positive direction and separates the poles at $z = t_r p^j q^k$
($0\leq r\leq 2m+5$ and $j,k\in \mathbb{Z}_{\geq 0}$) from the poles at
$z=t_r^{-1} p^{-j}q^{-k}$ ($0\leq r\leq 2m+5$ and $j,k \in \mathbb{Z}_{\geq
 0}$). For parameters $t$ for which such a contour does not exist
(i.e.\ if $t_rt_s \in p^{\mathbb{Z}_{\leq 0}} q^{\mathbb{Z}_{\leq 0}}$) we
def\/ine $E^m$ to be the analytic continuation of the function to these
parameters.
\end{Definition}

Observe that this function is well-def\/ined, in the sense that $E^m(t) =
E^m(-t)$ by a change of integration variable $z\to -z$. We can choose the
contour in \eqref{eqdefem} to be the unit circle itself whenever $|t_r|<1$
for all $r$. If $t_rt_s=p^{-n_1}q^{-n_2}$ for some $n_1,n_2\ge 0$, $r\ne
s$, then the desired contour fails to exist, but we can obtain the analytic
continuation by picking up residues of of\/fending poles before specializing
the parameter $t$. In particular the prefactor $\prod_{0\leq r<s\leq 2m+5}
(t_rt_s;p,q)$ cancels all the poles of these residues and thus ensures
$E^m$ is analytic at those points. In this case the integral reduces to a
f\/inite sum. Indeed for $t_0t_1=p^{-n_1}q^{-n_2}$, we have
\begin{gather*}
E^m(t)
= (pq/t_0t_1;p,q)\Biggl(\prod_{\substack{0\leq r<s\leq 2m+5 \\ (r,s)\neq
 (0,1)}} (t_rt_s;p,q)\Biggr)
\Gamma(pqt_0^{ 2}, t_1/t_0) \prod_{r=2}^{2m+5} \Gamma(t_r t_0^{\pm 1})
 \\
 \phantom{E^m(t) =}{} \times
 \sum_{k=0}^{n_1}
 \prod_{r=0}^{2m+5} \frac{\theta(t_rt_0;q,p)_k }{\theta(pqt_0/t_r;q,p)_k }
\frac{ \theta(pqt_0^{2};q,p)_{2k}}{\theta(t_0^2;q,p)_{2k} }
\sum_{l= 0}^{n_2} \prod_{r=0}^{2m+5} \frac{\theta(t_rt_0;p,q)_l}{\theta(pqt_0/t_r;p,q)_l}
\frac{ \theta(pqt_0^{2};p,q)_{2l}}{\theta(t_0^2;p,q)_{2l}},
\end{gather*}
where we use the notation $\theta(x;q,p)_k=\prod_{r=0}^{k-1} \theta(x p^r;q)$.
There are other singular cases, more dif\/f\/icult to evaluate, but in general
$E^m(t)$ is analytic on all of ${\mathcal H}_m$, as follows from
\cite[Lem\-ma~10.4]{e7trafo}.

The elliptic beta integral evaluation of \cite{evalform} is now given by
\begin{Theorem}
For $t\in \mathcal{H}_0$ we have 
\begin{equation}\label{eqeval}
E^0(t) = \prod_{0\leq r<s\leq 5} (pq/t_rt_s;p,q).
\end{equation}
\end{Theorem}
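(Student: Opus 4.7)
My plan is to follow the outline of Spiridonov's original argument. Define
\[
F(t) = \frac{E^0(t)}{\prod_{0\leq r<s\leq 5}(pq/t_rt_s;p,q)},
\]
so the claim reduces to showing $F \equiv 1$ on $\mathcal{H}_0$. By the remarks following the definition of $E^m$, $E^0$ is analytic on all of $\mathcal{H}_0$, and the only zeros we need to fear in the denominator are compensated by corresponding zeros of the prefactor inside $E^0$; so $F$ extends to a meromorphic (in fact holomorphic, once we are done) function on $\mathcal{H}_0$, and the $S_6$-symmetry in the $t_r$ and the $t\to-t$ symmetry (via $z\to -z$ in the contour) are manifest.

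The central step is a $q$-contiguous relation for $E^0$, obtained by residue calculus. Starting from the Riemann three-term theta identity
\[
\theta(xw^{\pm 1};p)\theta(yz^{\pm 1};p) - \theta(xz^{\pm 1};p)\theta(yw^{\pm 1};p) = \tfrac{y}{w}\theta(xy^{\pm 1};p)\theta(wz^{\pm 1};p),
\]
and using $\Gamma(qz) = \theta(z;p)\Gamma(z)$, I would choose $x,y,w$ built from the $t_r$ so that the identity can be rewritten as a statement that a linear combination
\[
A(t)\,\Delta(z;t^{(1)}) + B(t)\,\Delta(z;t^{(2)}) - \Delta(z;t)
\]
of integrands of $E^0$ (with $t^{(1)}, t^{(2)}$ suitable $q$-shifts of $t$ respecting the balancing $\prod t_r = pq$) is an exact $q$-difference $h(qz;t) - h(z;t)$, with $h$ engineered to cancel the extra poles introduced by the shifts. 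Integrating over $\mathcal{C}$ and checking that the contour $z \mapsto qz$ crosses no poles of $h$ forces the right-hand side to vanish, yielding a three-term contiguous relation for $E^0$.

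Dividing this contiguous relation by the normalizing product turns it, after some bookkeeping with theta functions, into a difference equation for $F$ saying that $F$ is $p$-elliptic in (say) $t_0$ along the balancing slice. Combined with the $S_6$-symmetry and holomorphy on $\mathcal{H}_0$, a Liouville-type argument on a fundamental domain forces $F$ to be independent of each $t_r$, hence constant on $\mathcal{H}_0$. The value of that constant is then read off by specializing to the singular configuration $t_0 t_1 = 1$ (i.e.\ $n_1 = n_2 = 0$ in the notation preceding the theorem): the integral reduces to the explicit one-term residue formula displayed in the excerpt, which one checks directly to equal the right-hand side, so the constant is $1$. The main obstacle will be the construction of the auxiliary function $h(z;t)$ making the shifted-integrand combination an exact $q$-difference; this is a subtle theta-function manipulation and is the technical heart of the proof.
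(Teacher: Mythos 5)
First, a point of comparison: the paper does not actually prove this theorem --- it is quoted from Spiridonov's paper on the elliptic beta function, with the remark that elementary proofs appear in the references by Spiridonov (short proofs) and Rains. So your sketch must be judged against those arguments, whose global strategy (a parameter difference equation for the normalized integrand obtained from the theta addition formula, ellipticity/Liouville, and normalization at a pinched contour) you have correctly identified. Your normalization step in particular is sound: at $t_0t_1=1$ the renormalized integral reduces to the single residue term displayed just before the theorem, and a short computation using $\Gamma(x)\Gamma(pq/x)=1$ and $t_2t_3t_4t_5=pq$ shows this equals $\prod_{r<s}(pq/t_rt_s;p,q)$, so the constant is $1$.

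There are, however, two genuine gaps. The more serious one is logical. After dividing by the conjectured product (which satisfies the same contiguous relation), a three-term relation becomes $A'F(t^{(1)})+B'F(t^{(2)})=F(t)$ with $A'+B'=1$; this is a single linear constraint among three values of $F$ and does \emph{not} imply that $F$ is elliptic, nor even that $F$ is invariant under $t_0\mapsto qt_0$. The step ``turns it into a difference equation for $F$ saying that $F$ is $p$-elliptic in $t_0$'' is a non sequitur as written (and the $p$ versus $q$ bookkeeping is also muddled: a $q$-shift relation can at best give $q$-periodicity). The proofs you are emulating use a \emph{two-term} relation: one shows directly that $\rho(z;qt_0,t_1,\dots,t_4,q^{-1}t_5)-\rho(z;t)$ is an exact $q$-difference $h(qz)-h(z)$ for the normalized integrand $\rho=\Delta/\prod_{r<s}(pq/t_rt_s;p,q)$ (two of the three terms of the theta addition formula produce the shifted and unshifted integrands, the third is absorbed into $h$ via $\Gamma(qz)=\theta(z;p)\Gamma(z)$). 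Integration then gives invariance of $F$ under $t_0\to qt_0$, $t_5\to q^{-1}t_5$; invariance under the corresponding $p$-shift comes for free from the $p\leftrightarrow q$ symmetry of the integrand, and it is the combination of the two shifts for multiplicatively independent $p,q$, together with holomorphy on a suitable parameter region, that forces constancy --- not ellipticity in a single modulus. The second gap is that the construction of $h$ is the entire analytic content of the proof and you have deferred it; relatedly, your ``Liouville-type argument'' needs $F$ to be pole-free where you apply it (the denominator $\prod(pq/t_rt_s;p,q)$ has zeros, and you concede that holomorphy of $F$ is only known ``once we are done''), so either an a priori vanishing argument for $E^0$ at those points or a restriction to a zero-free parameter domain must be supplied.
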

Apart from in \cite{evalform}, elementary proofs of this theorem are given
in~\cite{Spirspeb} and~\cite{e7trafo}. Moreover in~\cite{e7trafo} several
multivariate extensions of this result are presented.

A second important result is the $E_7$ symmetry satisf\/ied by $E^1$. Before we can state this in a~theorem we f\/irst have to
introduce the Weyl groups and their actions.
\begin{Definition}\label{defe8}
Let $\rho\in \mathbb{R}^8$ be the vector $\rho = (1/2,\ldots, 1/2)$.
Def\/ine the root system $R(E_8)$ of $E_8$ by 
$R(E_8) = \{ v \in \mathbb{Z}^8 \cup (\mathbb{Z}^8+\rho) ~|~ v\cdot v =2\}$.
Moreover the root system $R(E_7)$ of $E_7$ is given by $R(E_7)=\{ v\in R(E_8) ~|~ v\cdot \rho = 0\}$.
Denote by $s_{\alpha}$ the ref\/lection in the hyperplane orthogonal to $\alpha$ (i.e.\ $s_{\alpha}(\beta) = \beta - (\alpha \cdot \beta) \alpha$ for $\alpha \in R(E_8)$).
The corresponding Weyl group $W(E_7)$ is the ref\/lection group generated by $\{ s_\alpha ~|~ \alpha \in R(E_7)\}$.
Apart from the natural action of $E_7$ on $\mathbb{R}^8$, we need the action on $\mathcal{H}_1$ given by
$wt = \exp (w (\log(t)))$ for $t\in \mathcal{H}_1$ (where $\log((t_0,\ldots, t_7)) = (\log(t_0), \ldots, \log(t_7))$ and similarly
for $\exp$). Finally we will often meet the $W(E_7)$ orbit $S$ in $R(E_8)$ given by $S=\{v\in R(E_8) ~|~ s\cdot \rho =1\}$.
\end{Definition}
Note that the action of $W(E_7)$ on $\mathcal{H}_1$ is well-def\/ined due to
the equivalence of $t\sim -t$. Indeed, if we ref\/lect in a root of the form
$\rho -e_i-e_j-e_k-e_l$ then we have to take square roots of the~$t_j$, but
if we do this consistently (such that $\prod_j \sqrt{t_j} = pq$), the f\/inal
result will dif\/fer at most by a~factor~$-1$. A more thorough analysis of
this action is given in \cite{vdBRS}.

Now we can formulate the following theorem describing the transformations
satisf\/ied by $E^1$ (see \cite{Spirthi} and \cite{e7trafo}, the latter
containing also a multivariate extension).

\begin{Theorem}
The integral $E^1$ is invariant under the action of $W(E_7)$, i.e.\ for all
$w\in W(E_7)$ and $t\in \mathcal{H}_1$ we have $E^1(t) = E^1(wt)$.
\end{Theorem}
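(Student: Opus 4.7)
The plan is to reduce to a small generating set for $W(E_7)$ and dispatch each generator separately. The integer roots $e_i-e_j$ (for $0\le i<j\le 7$) lie in $R(E_7)$, and the corresponding reflections already generate the symmetric group $S_8$ acting by permuting the coordinates; adding just one ``half-integer'' reflection $s_\alpha$, say for $\alpha = \rho - e_0 - e_1 - e_2 - e_3$, then generates all of $W(E_7)$ (conjugation by $S_8$ sweeps out all $35$ half-integer reflections, after which every positive root is accounted for). So it is enough to verify invariance of $E^1$ under $S_8$ and under this single~$s_\alpha$.

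The $S_8$-invariance is immediate from \eqref{eqdefem}: both the integrand $\prod_r \Gamma(t_r z^{\pm 1})/\Gamma(z^{\pm 2})$ and the prefactor $\prod_{r<s}(t_r t_s;p,q)$ are symmetric in $(t_0,\ldots,t_7)$, and no contour deformation is needed.

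The substantive content is the identity $E^1(t) = E^1(s_\alpha t)$. Using the action of Definition~\ref{defe8}, a short computation gives
\[
(s_\alpha t)_i = \begin{cases} t_i/u, & i\in\{0,1,2,3\}, \\ t_i u, & i\in\{4,5,6,7\}, \end{cases}
\qquad u^2 = \frac{t_0 t_1 t_2 t_3}{pq},
\]
with the sign ambiguity in $u$ absorbed by $t\sim -t$ on $\mathcal{H}_1$. To prove this, I would follow the double-integral strategy of \cite{Spirthi,e7trafo} and introduce an auxiliary two-variable contour integral whose integrand is (up to prefactors)
\[
\frac{\prod_{r=0}^3 \Gamma(a_r z^{\pm 1}) \prod_{r=0}^3 \Gamma(b_r w^{\pm 1}) \, \Gamma(c z^{\pm 1} w^{\pm 1})}{\Gamma(z^{\pm 2})\Gamma(w^{\pm 2})},
\]
with the dictionary $b_r = t_r$, $a_r = t_{r+4}/c$, and $c = 1/u$. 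Then both ``inner'' integrations satisfy the balancing hypothesis of the elliptic beta evaluation~\eqref{eqeval}. Carrying out the $z$-integral first, the $E^0$-formula collapses it to $\prod_{r<s}\Gamma(a_ra_s)\cdot\prod_r\Gamma((a_rc)w^{\pm 1})\cdot\Gamma(c^2)$, and the remaining $w$-integral is precisely (a prefactor times) the defining integral of $E^1(t)$, since $\{b_r\}\cup\{a_rc\}=\{t_r\}$. Carrying out the $w$-integral first, the same formula produces the defining integral of $E^1$ in $z$ with parameters $\{a_r\}\cup\{b_rc\} = \{t_{r+4}u\}\cup\{t_r/u\}=s_\alpha t$. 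Fubini then yields $E^1(t) = E^1(s_\alpha t)$.

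Two aspects are genuinely the obstacles. First, the matching of prefactors: the identity $\Gamma(x) = (pq/x;p,q)/(x;p,q)$ must be used repeatedly to convert the $\prod_{r<s}(pq/x_rx_s;p,q)$ output of \eqref{eqeval} into the $(\cdot;p,q)$-factors appearing in the definition of the target $E^1$; this is bookkeeping, but of the delicate sort where a sign or a factor of $\Gamma(c^2)$ would break the whole argument. Second, one must show that the contours in $z$ and $w$ can be chosen simultaneously so as to separate upward from downward poles for both orders of integration, justifying the Fubini step. For generic $t$ this is routine; singular parameter values are then covered by the analytic-continuation argument already invoked after~\eqref{eqdefem}, which extends $E^m$ to all of $\mathcal{H}_m$.
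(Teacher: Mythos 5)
Your proposal is correct and is exactly the double-integral argument of the cited references \cite{Spirthi,e7trafo}, which is where the paper itself defers for the proof (the paper only remarks that its prefactor $\prod_{r<s}(t_rt_s;p,q)$ is chosen so that the elliptic gamma factors appearing in those references cancel, yielding the clean statement $E^1(t)=E^1(wt)$). Your reduction to $S_8$ plus the single reflection $s_{\rho-e_0-e_1-e_2-e_3}$, the computed action $t_i\mapsto t_i/u$, $t_{i+4}\mapsto t_{i+4}u$ with $u^2=t_0t_1t_2t_3/(pq)$, and the parameter dictionary making both inner integrals balanced all check out, so the only remaining work is the prefactor bookkeeping and the contour/Fubini justification you already identify.
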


In the cited references the transformation has certain products of elliptic
gamma functions on one or both sides of the equation, but these factors are
precisely canceled by our choice of prefactor.

Let us recall the following contiguous relations satisf\/ied by $E^1$
\cite{evalform} (it is shown there for $m=0$, but the proof is identical to
that of the $m=1$ case, apart from the use of the Weyl group action). We
have rewritten it in a clearly $W(E_7)$ invariant form.
\begin{Theorem}\label{thcontell}
Let us denote $t^{\rho}= \prod_j t_j^{\rho_j}$, and $t\cdot p^{\rho} =
(t_0p^{\rho_0}, \ldots, t_7 p^{\rho_7})$. Then if $\alpha, \beta, \gamma
\in R(E_7)$ form an equilateral triangle $($i.e.\ $\alpha\cdot \beta = \alpha
\cdot \gamma = \beta\cdot \gamma=1)$ we have
\begin{gather}
\prod_{\substack{\delta\in S \\ \delta \cdot (\alpha-\beta)=\delta \cdot (\alpha-\gamma) = 1}} (t^{\delta} p^{\delta \cdot \beta};q)
t^{\gamma} \theta(t^{\beta-\gamma};q) E^1(t\cdot p^{\alpha}) \nonumber\\
\qquad\qquad {} +\prod_{\substack{\delta\in S \\ \delta \cdot (\beta-\gamma) =\delta \cdot (\beta-\alpha)=1 }} (t^{\delta} p^{\delta \cdot \gamma};q)
t^{\alpha} \theta(t^{\gamma-\alpha};q) E^1(t\cdot p^{\beta}) \nonumber\\
\qquad\qquad{} + \prod_{\substack{\delta\in S \\ \delta \cdot (\gamma-\alpha) = \delta \cdot (\gamma-\beta) = 1}} (t^{\delta} p^{\delta \cdot \alpha};q)
t^{\beta} \theta(t^{\alpha-\beta};q) E^1(t\cdot p^{\gamma})=0 .\label{eqpcont}
\end{gather}
\end{Theorem}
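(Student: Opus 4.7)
The plan is to reduce \eqref{eqpcont} to a classical three-term $q$-theta identity, applied pointwise inside the contour integral. The identity is stated in a manifestly $W(E_7)$-covariant form: since $E^1$ is $W(E_7)$-invariant and $S$ is a single $W(E_7)$-orbit, conjugating $(t,\alpha,\beta,\gamma)$ by any $w\in W(E_7)$ permutes the three summands and preserves the relation. Hence it suffices to verify it for one convenient equilateral triangle, say $\alpha=e_0-e_1$, $\beta=e_0-e_2$, $\gamma=e_0-e_3$, where each $p$-shift acts on only a single pair of parameters and all exponents are integers.

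Next, I would use the difference equation $\Gamma(pz)=\theta(z;q)\Gamma(z)$ from \eqref{eqellgammadiff}, together with the reflection equation $\Gamma(z)\Gamma(pq/z)=1$ to handle the negative shifts, to rewrite each of $E^1(t\cdot p^{\alpha})$, $E^1(t\cdot p^{\beta})$, $E^1(t\cdot p^{\gamma})$ so that all three integrals share the same $\Gamma$-factor integrand $\prod_r\Gamma(t_rz^{\pm 1})/\Gamma(z^{\pm 2})$, up to explicit $q$-theta factors $F_{\alpha}(z)$, $F_{\beta}(z)$, $F_{\gamma}(z)$. The same difference equation applied to the prefactor $\prod_{r<s}(t_rt_s;p,q)$ produces precisely the products $(t^{\delta}p^{\delta\cdot\beta};q)$ indexed by $\delta\in S$ appearing in \eqref{eqpcont}: in the chosen coordinates the relevant $\delta$'s are the integer vertices $e_r+e_s$, and the shifts combine additively through the inner products $\delta\cdot\alpha$, $\delta\cdot\beta$, $\delta\cdot\gamma$. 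After dividing by a common nonvanishing prefactor, \eqref{eqpcont} reduces to the pointwise assertion
\[
c_{\alpha}F_{\alpha}(z)+c_{\beta}F_{\beta}(z)+c_{\gamma}F_{\gamma}(z)\equiv 0,
\]
with $c_{\star}$ exactly the coefficients $t^{\gamma}\theta(t^{\beta-\gamma};q)$, etc., listed in \eqref{eqpcont}.

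Each $F_{\star}(z)$ is a short product of $q$-theta functions in $z$, and the required identity is a three-term $q$-theta relation of Riemann type, expressing a linear dependence among three such products when the underlying elliptic divisor classes are constrained as they are by the equilateral condition $\alpha\cdot\beta=\alpha\cdot\gamma=\beta\cdot\gamma=1$. This is the same theta identity that drives the $m=0$ contiguous relation proved in \cite{evalform}; the argument here differs only in the systematic use of $W(E_7)$-covariance to free the triangle from a specific representative. The main obstacle is the bookkeeping connecting the concrete theta factors produced by the gamma difference equations in the chosen coordinates with the $W(E_7)$-invariant products over $\delta\in S$ in \eqref{eqpcont}; once this dictionary is set up, the identity collapses to a standard three-term theta relation and the theorem follows.
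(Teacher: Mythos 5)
Your proposal is correct and follows essentially the same route as the paper: the paper likewise verifies the relation at the level of integrands for the single triangle $\alpha=e_1-e_0$, $\beta=e_2-e_0$, $\gamma=e_3-e_0$ using the fundamental three-term theta identity \eqref{eqtheta} (your ``Riemann-type'' relation), and then invokes the $W(E_7)$-covariant form of the statement together with the transitivity of $W(E_7)$ on equilateral triangles of roots to conclude. The only cosmetic difference is that the paper records the theta identity explicitly rather than citing it as standard.
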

\begin{proof}
Observe that the relation is satisf\/ied by the integrands when
$\alpha=e_1-e_0$, $\beta=e_2-e_0$ and $\gamma=e_3-e_0$, where $\{e_i\}$
form the standard orthonormal basis of $\mathbb{R}^8$, due to the
fundamental relation
\begin{equation}\label{eqtheta}
\frac{1}{y} \theta\big(wx^{\pm1}, yz^{\pm 1};q\big) +
\frac{1}{z} \theta\big(wy^{\pm1}, zx^{\pm 1};q\big) +
\frac{1}{x} \theta\big(wz^{\pm1}, xy^{\pm 1};q\big) =0.
\end{equation}
Integrating the identity now proves the contiguous relations for these
special $\alpha$, $\beta$ and $\gamma$. As the equation is invariant under
the action of $W(E_7)$, which acts transitively on the set of all
equilateral triangles of roots, the result holds for all such triangles.
\end{proof}

These contiguous relations can be combined to obtain relations of three
$E^1$'s which dif\/fer by shifts along any vector in the root lattice of
$E_7$ (i.e., the smallest 7-dimensional lattice in~$\mathbb{R}^8$ containing~$R(E_7)$). In particular the equation relating $E^1(t \cdot p^{\alpha})$,
$E^1(t)$ and $E^1(t \cdot p^{-\alpha})$ for $\alpha = e_1-e_0$ is the
elliptic hypergeometric equation studied by Spiridonov in, amongst others,~\cite{Spircehf}.

\section{Limits to basic hypergeometric functions}\label{seclimits}

In order to obtain basic hypergeometric limits from these integrals we let
$p\to 0$. As our parameters can not be chosen independently of $p$ (due to
the balancing condition), we have to explicitly describe how they behave as
$p\to 0$. Dif\/ferent ways the parameters depend on $p$ require dif\/ferent
ways of obtaining the limit. In this section we describe the dif\/ferent
limits of interest to us.

Using the notation of Theorem \ref{thcontell} we see that $u \cdot
p^{\alpha}$, for $u$ independent of $p$, is an element of~$\mathcal{H}_m$
if $\alpha \in \mathbb{R}^{2m+6}$ with $\sum_{r} \alpha_r = m+1$, and $u
\in \tilde{\mathcal{H}}_m = \{ z\in \mathbb{C}^{2m+6} ~|~ \prod_{i} z_i =
q^{m+1} \}/ \sim$ (where we again have $z\sim -z$). In particular in this
section we will describe various conditions on $\alpha$ which ensure that
the limit
\begin{equation}\label{eqlim}
B^m_{\alpha}(u) = \lim_{p\to 0} E^m(u \cdot p^{\alpha})
\end{equation}
is well-def\/ined, and give explicit expressions for this limit. In
particular, for $m=1$ we would like such expressions for $\alpha$ in the
entire Hesse polytope as def\/ined in Theorem \ref{th1}.

The simplest way to obtain a limit is given by the following proposition.

\begin{Proposition}\label{proptrivlimit}
For $\alpha \in \mathbb{R}^{2m+6}$ satisfying $\sum_r \alpha_r = m+1$ and
such that $0\leq \alpha_r \leq 1$ for all $r$, the limit in \eqref{eqlim}
exists and we have
\[
B^m_\alpha(u) = \prod_{\substack{0\leq r<s\leq 2m+5 \\ \alpha_r=\alpha_s=0}} (u_ru_s;q)
 \frac{(q;q)}{2} \int_{\mathcal{C}} (z^{\pm 2};q)
 \frac{\prod\limits_{r\colon \alpha_r = 1} (q/u_r z^{\pm 1};q)}{\prod\limits_{r\colon \alpha_r=0} (u_r z^{\pm 1};q)} \frac{dz}{2\pi i z},
\]
where the contour is a deformation of the unit circle which separates the
poles at $z=u_r q^n$ ($\alpha_r=0, n\geq 0$) from those at $u_r^{-1}
q^{-n}$ $(\alpha_r=0$, $n\geq 0)$.
\end{Proposition}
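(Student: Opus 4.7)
The plan is to analyze the pointwise limit of the integrand and prefactor term by term, then justify interchanging the limit with integration. Writing $t_r = u_r p^{\alpha_r}$, I would treat the factor $\Gamma(t_r z^{\pm 1};p,q)$ case by case. When $\alpha_r = 0$ the defining product of the elliptic gamma gives $\Gamma(u_r z^{\pm 1};p,q) \to 1/(u_r z^{\pm 1};q)$ as $p \to 0$. When $\alpha_r = 1$, I would apply the reflection equation $\Gamma(z)\Gamma(pq/z)=1$ to write $\Gamma(u_r p z^{\pm 1};p,q) = 1/\Gamma(q/u_r z^{\pm 1};p,q)$, whose limit is $(q/u_r z^{\pm 1};q)$. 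When $0 < \alpha_r < 1$, every factor in both the numerator and denominator of the defining product of $\Gamma(u_r p^{\alpha_r} z^{\pm 1};p,q)$ tends individually to $1$, so the whole elliptic gamma tends to $1$. Similarly $\Gamma(z^{\pm 2};p,q) \to 1/(z^{\pm 2};q)$ and $(p;p) \to 1$.

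For the prefactor I would use $(x;p,q) = \prod_{j\geq 0}(xp^j;q)$. If $\alpha_r + \alpha_s > 0$ then every factor $(u_r u_s p^{j+\alpha_r+\alpha_s};q)$ tends to $1$, so $(t_r t_s;p,q) \to 1$; if $\alpha_r = \alpha_s = 0$ then only the $j = 0$ factor survives and contributes $(u_r u_s;q)$. Multiplying everything together, the prefactor times the integrand converges pointwise to exactly the expression appearing on the right-hand side of the proposition.

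Finally, to pass the limit inside the integral, I would fix a contour $\mathcal{C}$ depending only on $u$ that separates $u_r q^n$ from $u_r^{-1} q^{-n}$ for those $r$ with $\alpha_r = 0$ and lies in a fixed annulus bounded away from $0$ and $\infty$. For any $r$ with $\alpha_r > 0$, the inner poles of $\Gamma(u_r p^{\alpha_r} z;p,q)$ at $z = u_r p^{\alpha_r + j} q^k$ collapse to $0$ and the corresponding outer poles diverge to $\infty$ as $p \to 0$, so $\mathcal{C}$ separates them from the opposite set of poles for all sufficiently small $p$. The main obstacle is establishing uniform convergence of the integrand on $\mathcal{C}$; I would do this by combining the uniform convergence of the products defining the elliptic gamma function on compact subsets avoiding poles with elementary tail estimates on $(x;p,q)$ that are uniform in $p$ throughout a bounded neighbourhood of $0$. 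Dominated convergence then yields the stated identity.
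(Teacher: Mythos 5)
Your proposal is correct and follows essentially the same route as the paper: termwise evaluation of the limits of the elliptic gamma functions and $(p,q)$-shifted factorials (the paper simply observes $\Gamma(p^{\gamma}z)$ is continuous at $p=0$ for $0\le\gamma\le1$ rather than invoking the reflection equation for $\gamma=1$, but the content is identical), followed by the observation that poles with $\alpha_r>0$ migrate to $0$ or $\infty$ so a fixed contour works for small $p$, and a uniform-convergence argument. The only piece you omit is the paper's final appeal to the Stieltjes--Vitali theorem to cover the codimension-one divisors $u_ru_s\in q^{-\mathbb{Z}_{\ge 0}}$ (with $\alpha_r=\alpha_s=0$) where no admissible contour exists and the limit must be identified with the analytic continuation.
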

We want to stress that the limit also exists if the integral above is not well-def\/ined
(i.e.\ when there exists no proper contour, when $u_ru_s=q^{-n}$ for some $\alpha_r=\alpha_s=0$).
In that case the limit~$B^m_{\alpha}$ is equal to the
analytic continuation of the integral representation to these values of the parameters.
\begin{proof}
Observe that we can determine limits of the elliptic gamma function by
\[
\lim_{p\to 0} \Gamma(p^{\gamma}z) = \begin{cases}
\frac{1}{(z;q)} & \text{if $\gamma=0$}, \\
1 & \text{if $0<\gamma<1$}, \\
(q/z;q) & \text{if $\gamma=1$}.
\end{cases}
\]
In fact $\Gamma(p^{\gamma} z)$ is well-def\/ined and continuous in $p$ at $p=0$ for $0\leq \gamma\leq 1$. These
limits can thus be obtained by just plugging in $p=0$.
Similarly observe that
\[
\lim_{p\to 0} (p^{\gamma} z;p,q) = \begin{cases}
(z;p,q) & \text{if $\gamma=0$}, \\
1 & \text{if $\gamma>0$}.
\end{cases}
\]
The result now follows from noting that an integration contour which
separates the poles at $z=u_r q^n$ ($\alpha_r=0, n\geq 0$) from those at
$u_r^{-1} q^{-n}$ ($\alpha_r=0, n\geq 0$) will also work in the def\/inition
of $E_m(u \cdot p^{\alpha})$ if $p$ is small enough (as the poles of the
integrand created by $u_r$'s with $\alpha_r >0$ will all converge either to
0 or to inf\/inity; in particular they will remain on the correct side of the
contour for small enough $p$). Thus we can just plug in $p=0$ in the
integral to obtain the limit.

This proof only works when the parameters $u$ are such that there exists a
contour for the limiting integral. However, this implies these limits work
outside a f\/inite set of co-dimension one divisors. Indeed, on compacta
outside these divisors the convergence is uniform.
Using the Stieltjes--Vitali theorem we can conclude that the limit also holds on these
divisors, and is in fact uniform on compacta of the entire parameter space. Moreover
Stieltjes--Vitali tells us that the limit function is analytic in these points as well.
\end{proof}

A second kind of limit, following \cite[\S~5]{limits}, can be obtained by
f\/irst breaking the symmetry of the integrand. This leads to the following
proposition.

\begin{Proposition}\label{propsymbreak}
Let $\alpha \in \mathbb{R}^{2m+6}$ satisfy $\sum_r \alpha_r = m+1$ and
$\alpha_0\leq \alpha_1 \leq \alpha_2$. Define
$\beta=\alpha_0+\alpha_1+\alpha_2$ and impose the extra conditions
$\beta\leq \alpha_r\leq -\beta$ for $r=0,1,2$ and $-\beta\leq \alpha_r\leq
1+\beta$ for $r\geq 3$. Then the limit in \eqref{eqlim} exists, and takes
one of the following forms:
\begin{itemize}\itemsep=0pt
\item
If $\alpha_0=\alpha_1=-\alpha_2$ (thus $\beta=\alpha_0$), then
\begin{gather*}
B_\alpha^m(t) =
\frac{\prod\limits_{r\geq 3\colon \alpha_r=-\alpha_0} (u_ru_0,u_ru_1;q) }{(q/u_0u_2,q/u_1u_2;q)}
(u_0u_1;q)^{1_{\{\alpha_0=-1/2\}}}
 \\
 \phantom{B_\alpha^m(t) =}{} \times (q;q) \int_{\mathcal{C}} \theta(u_0u_1u_2/z;q)
 \frac{(q/u_2z;q)}{(u_0/z,u_1/z;q)} \\
\phantom{B_\alpha^m(t) =}{}
 \times
 \frac{\prod\limits_{r\geq3 \colon \alpha_r=1+\alpha_0} (qz/u_r;q)}{\prod\limits_{r\geq 3\colon \alpha_r=-\alpha_0} (u_rz;q)}
 \left( \frac{(1-z^2) (qz/u_2;q) }{(u_0z,u_1z;q)} \right)^{1_{\{\alpha_0=-1/2\}}}
 \frac{dz}{2\pi i z},
\end{gather*}
where the contour separates the downward from the upward pole
sequences. Here $1_{\{\alpha_0=-1/2\}}$ equals 1 if $\alpha_0=-1/2$ and 0
otherwise.
\item
If $\alpha_0<\alpha_1=-\alpha_2$ (again $\beta=\alpha_0$), then
\begin{gather*}
B_\alpha^m(u) =
\frac{(q;q)}{(q/u_1u_2;q)}
 \prod_{\substack{3\leq r\leq 2m+5 \\ \alpha_r=-\alpha_0}} (u_ru_0;q)
\int_{\mathcal{C}} \theta(u_0u_1u_2/z;q) \\
\phantom{B_\alpha^m(u) =}{} \times \frac{1}{(u_0/z;q)}
\frac{\prod\limits_{r\geq 3\colon \alpha_r=1+\alpha_0} (qz/u_r;q) }{
\prod\limits_{r\geq 3\colon \alpha_r=-\alpha_0} (u_rz;q)}
\left(\frac{(1-z^2)}{(u_0z;q)}\right)^{1_{\{\alpha_0=-1/2}\}} \frac{dz}{2\pi i z},
\end{gather*}
where the contour separates the downward poles from the upward ones.
\item
Finally, if $\alpha_1<-\alpha_2$ (thus $\beta<\alpha_0$), then
\begin{equation*}
B^m(t) =
 (q;q) \int_{\mathcal{C}}
 \theta(u_0u_1u_2/z;q)
 \frac{\prod\limits_{r\colon \alpha_r=1+\beta} (qz/u_r;q)}{\prod\limits_{r\colon \alpha_r=-\beta} (u_rz;q)}
 \big(1-z^2\big)^{1_{\{\beta=-1/2\}}}
 \frac{dz}{2\pi i z},
\end{equation*}
where the contour excludes the poles but circles the essential singularity
at zero.
\end{itemize}
\end{Proposition}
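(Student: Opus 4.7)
The argument extends Proposition~\ref{proptrivlimit}, following \cite[Section~5]{limits}. The obstruction in this regime is that some exponents $\alpha_r$ (in particular $\alpha_0,\alpha_1,\alpha_2$) may fall outside $[0,1]$, so naively setting $p=0$ in $E^m(u\cdot p^{\alpha})$ makes certain $\Gamma$-factors of the integrand blow up or vanish simultaneously on the integration contour. We resolve this by making an asymmetric change of integration variable, roughly $z = w\,p^{-\beta}$, which breaks the $z\mapsto z^{-1}$ symmetry of the integrand and replaces each $\Gamma(u_r p^{\alpha_r}z^{\pm 1})$ by $\Gamma(u_r p^{\alpha_r\mp\beta}w^{\pm 1})$. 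The three cases of the statement correspond to how many of $\alpha_0,\alpha_1,\alpha_2$ coincide with the boundary value $-\beta$ (two, one, and zero, respectively), which determines which factors survive nontrivially in the limit.

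\textbf{Small-parameter factors and the theta.} For the three ``small'' parameters $r=0,1,2$, the hypothesis $\beta\leq\alpha_r\leq -\beta$ gives $\alpha_r+\beta\in[2\beta,0]$ and $\alpha_r-\beta\in[0,-2\beta]$, so that one of the two factors $\Gamma(u_r p^{\alpha_r\mp\beta} w^{\pm 1})$ has exponent in $[0,1]$ and is treated by the piecewise recipe in the proof of Proposition~\ref{proptrivlimit}, while the other has exponent in $[-1,0]$ and must be rewritten using the reflection $\Gamma(x)\Gamma(pq/x)=1$ followed by iterated application of $\Gamma(px)=\theta(x;q)\Gamma(x)$ from~\eqref{eqellgammadiff}. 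This reflect-and-shift operation, applied in parallel to the three small factors and to the $1/\Gamma(z^{\pm 2})$ in the denominator (which itself carries exponent $\mp 2\beta$ after the substitution), telescopes the accumulated $\theta(\cdot;q)$ contributions down to the single residual factor $\theta(u_0u_1u_2/z;q)$ in the limit. The Kronecker-delta exponents $1_{\{\alpha_0=-1/2\}}$ and $1_{\{\beta=-1/2\}}$ appear precisely when $\beta$ or $\alpha_0$ lies exactly on a half-integer boundary, requiring one extra integer shift of the difference equation that produces the additional $(1-z^2)$ factor (together with its accompanying single-$u_r$ pieces in the first case).

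\textbf{Large-parameter factors, prefactor, and contour.} For $r\geq 3$ the hypothesis $-\beta\leq\alpha_r\leq 1+\beta$ lets one handle each $\Gamma(u_r p^{\alpha_r\mp\beta}w^{\pm 1})$ exactly as in Proposition~\ref{proptrivlimit}: one sign of $z^{\pm 1}$ gives a nontrivial $q$-Pochhammer in the limit and the other gives~$1$. Concretely, those $r$ with $\alpha_r=-\beta$ contribute $(u_r w;q)$ in the denominator, while those with $\alpha_r=1+\beta$ contribute $(qw/u_r;q)$ in the numerator, matching the products displayed in each case. The $(p,q)$-shifted factorial prefactors $(u_ru_s p^{\alpha_r+\alpha_s};p,q)$ converge term-by-term, and only those pairs with $\alpha_r+\alpha_s=0$ survive, giving the $(u_ru_s;q)$ factors in front of each integral (and the extra $(u_0u_1;q)$ in the first case when $\alpha_0=-1/2$). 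For the contour: in the first two cases the rescaled contour still separates infinite downward and upward pole sequences coming from the $r\geq 3$ gammas; in the third case, where $\beta<\alpha_0$, \emph{all} of the $r=0,1,2$ pole sequences disappear in the limit, and the essential singularity of $\theta(u_0u_1u_2/w;q)$ at $w=0$ becomes the only obstruction, forcing the contour to encircle $w=0$ while excluding the finite poles coming from $r\geq 3$.

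\textbf{Main obstacle.} The truly delicate step is the telescoping above: one must verify that the reflect-and-shift procedure, applied to the three small factors and to $1/\Gamma(z^{\pm 2})$, collapses to the single factor $\theta(u_0u_1u_2/z;q)$ with the correct boundary corrections, and correctly identifies when the flags $1_{\{\alpha_0=-1/2\}}$ or $1_{\{\beta=-1/2\}}$ switch on. The remaining issues -- pointwise convergence, justification of interchanging limit and integral, and extension to parameters for which the limit integral representation is ill-defined -- are handled, as in Proposition~\ref{proptrivlimit}, by observing that the convergence is uniform on compacta outside a codimension-one divisor and invoking the Stieltjes--Vitali theorem to obtain an analytic continuation to all of $\tilde{\mathcal{H}}_m$.
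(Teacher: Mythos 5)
The peripheral parts of your proposal (treatment of the $r\ge 3$ gamma factors, convergence of the prefactor, uniformity on compacta plus Stieltjes--Vitali) match the paper, but the central analytic device is not the paper's and, as described, does not work. A change of integration variable $z=wp^{-\beta}$ cannot by itself ``break the $z\mapsto z^{-1}$ symmetry'' of the integrand in any useful sense, and the subsequent reflect-and-shift manipulation does not telescope to $\theta(u_0u_1u_2/z;q)$. Concretely, after your substitution the problematic factors are $\prod_{r=0}^{2}\Gamma\bigl(u_rp^{\alpha_r+\beta}w^{-1}\bigr)\big/\Gamma\bigl(p^{2\beta}w^{-2}\bigr)$, and one application of $\Gamma(px)=\theta(x;q)\Gamma(x)$ rewrites this as gamma functions with exponents in $[0,1]$ times the theta ratio $\theta\bigl(p^{2\beta}w^{-2};q\bigr)\big/\prod_{r=0}^{2}\theta\bigl(u_rp^{\alpha_r+\beta}w^{-1};q\bigr)$. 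No product identity collapses this to a single theta function; worse, since $\log\bigl|(p^{\gamma}y;q)\bigr|\sim-\gamma^{2}(\log|p|)^{2}/(2\log|q|)$ for $\gamma<0$ and $(2\beta)^{2}-\sum_{r=0}^{2}(\alpha_r+\beta)^{2}=-\sum_{r}\alpha_r^{2}-\beta^{2}<0$, this ratio tends to $0$: the three divergent Pochhammer products in the denominator overwhelm the single one in the numerator. So the pointwise limit of your rewritten integrand is identically zero, and no local manipulation of individual gamma factors can recover the stated nonzero answer. (Relatedly, for $\alpha_r<0$ the pole sequences that must lie inside and outside the contour migrate in incompatible directions, so there is no $p$-independent contour on which ``plugging in $p=0$'' is even meaningful.)

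The missing idea is the theta-function partition of unity derived from \eqref{eqtheta},
\[
\frac{\theta(s_0s_1s_2/z,s_0z,s_1z,s_2z;q)}{\theta(z^2,s_0s_1,s_0s_2,s_1s_2;q)}+\bigl(z\leftrightarrow z^{-1}\bigr)=1 ,
\]
which one multiplies into the ($z\mapsto z^{-1}$ invariant) integrand so that the two resulting halves integrate to the same value and only one need be kept. Specializing $s_r=t_r$ for $r=0,1,2$ and using $\Gamma(t_rz)\theta(t_rz;q)=\Gamma(pt_rz)$ together with $1/\bigl(\Gamma(z^{\pm2})\theta(z^2;q)\bigr)=\theta(z^{-2};p)$ converts the offending factors into $\Gamma(pt_rz,t_r/z)$ and $\theta(z^{-2};p)$, all of whose exponents lie in $[0,1]$ after the shift $z\to zp^{\beta}$; the factor $\theta(t_0t_1t_2/z;q)=\theta(u_0u_1u_2p^{\beta}/z;q)$ is inserted by this identity and becomes exactly $\theta(u_0u_1u_2/z;q)$ after the shift --- it is not generated by reflections of gamma functions. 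Only after this step does the ``set $p=0$'' argument of Proposition~\ref{proptrivlimit} apply, and only then do the contour and the $(1-z^2)^{1_{\{\beta=-1/2\}}}$ boundary terms (coming from $\theta(p^{-2\beta}z^{-2};p)$) emerge as you describe. Without the symmetry-breaking identity the argument has no route to a nonzero limit.
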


\begin{proof}
In order to obtain these limits we will break the symmetry of the
integral. We f\/irst rewrite \eqref{eqtheta} in the form
\[
\frac{\theta(s_0s_1s_2/z,s_0z,s_1z,s_2z;q)}{\theta(z^2,s_0s_1,s_0s_2,s_1s_2;q)} +
\big(z\leftrightarrow z^{-1}\big) = 1.
\]
Since the integrand of $E^m$ is invariant under the interchange of $z\to
z^{-1}$, we can multiply by the left hand side of the above equation and
observe that the integrand splits in two parts, each integrating to the
same value. Therefore, the integral itself is equal to twice the integral
of either part, and we thus obtain
\begin{gather}
E^m(t) =\prod_{0\leq r<s\leq 2m+5} (t_rt_s;p,q)
 (p;p) (q;q)\nonumber \\
 \phantom{E^m(t) =}{} \times \int_{\mathcal{C}} \frac{\prod\limits_{r=0}^{2m+5} \Gamma(t_r z^{\pm 1})}{\Gamma(z^{\pm 2})} \frac{\theta(s_0s_1s_2/z,s_0z,s_1z,s_2z;q)}{\theta(z^2,s_0s_1,s_0s_2,s_1s_2;q)}
 \frac{dz}{2\pi i z}.\label{eqemasym}
\end{gather}
The poles introduced by the factor $1/\theta(z^2;q)$ are canceled by zeros
of the factor $1/\Gamma(z^{\pm 2})$, as we have
\[
\frac{1}{\Gamma(z^{\pm 2}) \theta(z^2;q)} = \frac{\Gamma(pq z^2)}{\Gamma(pz^2)} = \theta\big(pz^2;p\big)=\theta\big(z^{-2};p\big)
\]
using the dif\/ference and ref\/lection equations satisf\/ied by the elliptic
gamma functions. This process therefore does not introduce any extra poles
to the integrand; we may therefore use the same contour as before. In
fact, since some of the original poles might have been cancelled, the constraints
on the contour can be correspondingly weakened.

Now, specialize $s_r=t_r$ ($r=0,1,2$) in \eqref{eqemasym} and simplify to
obtain
\begin{gather}
E^m(t) =
\frac{\prod\limits_{0\leq r<s\leq 2} (pt_rt_s;p,q) \prod\limits_{r=0}^2 \prod\limits_{s=3}^{2m+5} (t_rt_s;p,q) \prod\limits_{3\leq r<s\leq 2m+5} (t_rt_s;p,q)}{(q/t_0t_1,q/t_0t_2,q/t_1t_2;q)}
\nonumber \\
\phantom{E^m(t) =}{} \times (p;p) (q;q)\int_{\mathcal{C}}
 \theta(z^{-2};p) \theta(t_0t_1t_2/z;q)
 \prod_{r=0}^2 \Gamma(pt_rz,t_r/z)
 \prod_{r=3}^{2m+5} \Gamma(t_r z^{\pm 1})
 \frac{dz}{2\pi i z} .\label{eqsymbreak3spec}
\end{gather}
Now change the integration variable $z\to zp^{\beta}$.
The inequalities $\alpha_0,\alpha_1,\alpha_2\ge\beta$ and
$-\beta\le \alpha_r$, $3\le r$ ensure that the downward poles remain
bounded and the upward poles remain bounded away from 0 as $p\to 0$. There
thus (for generic $u_r$) exists a contour valid for all suf\/f\/iciently small
$p$. After f\/ixing such a contour, the limit again follows by simply
plugging in $p=0$; the constraints on $\alpha$ are necessary and suf\/f\/icient
to ensure that all gamma functions in the integrand have well-def\/ined
limits.
\end{proof}

The two previous limits still do not allow us to take limits for each
possible vector in the Hesse polytope (in the $m=1$ case). Indeed (as we
will show below) we have covered the polytope, modulo the action of $S_8$
to sort the entries $\alpha_0\leq \cdots \leq \alpha_{7}$, as long as
either $\alpha_0\geq 0$ (Proposition~\ref{proptrivlimit}) or
$\alpha_1+\alpha_2\leq 0$ (Proposition~\ref{propsymbreak}). The remaining
limits require a more careful look and are given by the following
proposition.
\begin{Proposition}\label{propsum}
Let $\alpha \in \mathbb{R}^{2m+6}$ satisfy $\sum_r \alpha_r = m+1$ and
assume $-1/2 \leq \alpha_0<0$, $\alpha_0\leq \alpha_1\leq
\alpha_2\leq \cdots\leq \alpha_{m+3}\le 1+\alpha_0$ and
for $2\leq k\leq m+3$,
\begin{equation*}
\sum_{r\in I} (\alpha_r+\alpha_0) \geq 2\alpha_0 , \qquad I \subset \{1,2,\ldots, 2m+5\}, \qquad |I|=k
\end{equation*}
hold. Then the limit in \eqref{eqlim} exists.
\begin{itemize}\itemsep=0pt
\item
If $\alpha_0=\alpha_1=-1/2$ $($thus $\alpha_2=\cdots=\alpha_{2m+5}=1/2)$ we have
\begin{gather*}
B^m_\alpha(u) = \frac{\prod\limits_{r=2}^{2m+5} (u_ru_1,qu_0/u_r;q)}{ (qu_0^2,u_0u_1,u_1/u_0;q)} \\
\phantom{B^m_\alpha(u) =}\times
\rWs{2m+8}{2m+7}\big(u_0^2;u_0u_1,u_0u_2,\ldots,u_0u_{2m+5};q,q\big)
+ (u_0 \leftrightarrow u_1).
\end{gather*}
\item
If $\alpha_0=-1/2< \alpha_1$, and if $\alpha_1+\alpha_2=0$ the extra condition $|u_1u_2|<1$, we have with $n=\#\{r\colon \alpha_r<1/2\}-3 $
\begin{gather*}
B^m_\alpha(u) = \frac{\{(u_1u_2;q)\}\!\!\prod\limits_{r\colon \alpha_r=1/2}\!\! (qu_0/u_r;q)}{(qu_0^2;q)}
\rWsn{}{}{n}\Bigg(\!u_0^2;u_0u_r\colon \alpha_r=1/2 ;q, u_0^n \!\!\prod_{r>0\colon \alpha_r<1/2}\!\! u_r\!\Bigg),
\end{gather*}
where the notation implies we take as parameters $u_0u_r$ for those $r$
which satisfy $\alpha_r=1/2$ and the factor $(u_1u_2;q)$ appears only if $\alpha_1+\alpha_2=0$.

\item If $-1/2<\alpha_0=\alpha_1<0$ then
\begin{gather*}
B^m_\alpha(u) = \frac{\prod\limits_{\alpha_r=-\alpha_0} (u_1u_r;q) \prod\limits_{\alpha_r=1+\alpha_0} (qu_0/u_r;q)}{(u_1/u_0;q)}
\\
\phantom{B^m_\alpha(u) =}{} \times
\rphisxkn{n}{\quad \;\, \qquad u_0u_r\colon \alpha_r=-\alpha_0 \\ qu_0/u_1,qu_0/u_r\colon \alpha_r=1+\alpha_0 }{q}
+ (u_0 \leftrightarrow u_1),
\end{gather*}
where $n= \# \{r\colon \alpha_r=-\alpha_0\} - \# \{r\colon \alpha_r=1+\alpha_0\} -2$.

\item
If $-1 < 2\alpha_0 = \sum_{r\geq 1\colon \alpha_r+\alpha_0 < 0} (\alpha_r+\alpha_0)$ and $\alpha_1>\alpha_0$,
and if $\alpha_1+\alpha_2=0$ the extra condition $|u_1u_2|<1$, we get
\begin{gather*}
B^m_\alpha(u) = \{(u_1u_2;q)\} \prod_{r\colon \alpha_r=1+\alpha_0} (qu_0/u_r;q) \\
\hphantom{B^m_\alpha(u) =}{}\times \rphisxkn{n}{ u_0u_r\colon \alpha_r=-\alpha_0 \\ qu_0/u_r\colon \alpha_r=1+\alpha_0}{u_0^{-2} \prod_{r>0\colon \alpha_r<-\alpha_0} (u_ru_0)},
\end{gather*}
where $n= \# \{r\colon \alpha_r<-\alpha_0\}
-4-\#\{r\colon \alpha_r=1+\alpha_0\}+\#\{r\colon \alpha_r=-\alpha_0\}$, and the factor
$(u_1u_2;q)$ appears only if $\alpha_1+\alpha_2=0$.

\item
Finally if $2\alpha_0< \sum_{r\geq 1\colon \alpha_r+\alpha_0 < 0} (\alpha_r+\alpha_0)$ we get
\[
B^m_\alpha(u) = \prod_{r\colon \alpha_r=1+\alpha_0} (qu_0/u_r;q).
\]
\end{itemize}
\end{Proposition}

\begin{proof}
Note that limits in the cases $\alpha_0=\alpha_1=-1/2$ and $-1/2<\alpha_0=\alpha_1\geq -\alpha_r$ ($r\geq 2)$
are given in Proposition \ref{propsymbreak}. Together with the limits in this proposition we have thus covered
all of the possible values for $\alpha$ at least once.

Due to the condition $\alpha_0<0$, in the integral def\/inition of
$E^m(u\cdot p^{\alpha})$ there always exist poles which have to be excluded
from the contour which go to zero as $p\to 0$, for example
$z=u_0p^{\alpha_0} q^k$ for $k\in \mathbb{Z}_{\geq 0}$. Similarly there are
poles going to inf\/inity as $p\to 0$ which have to be included. The proof
of this proposition in essence consists of f\/irst picking up the residues
belonging to these poles, and taking the contour of the remaining integral
close to the unit circle. Subsequently we take the limit as $p\to 0$ (which
involves picking up an increasing number of residues), and show that the
sums of these residues converge to one or two basic hypergeometric series,
while the remaining integral converges to zero.

\looseness=-1 Proving that we are allowed to interchange sum and limit and that the
remaining integral vanishes in the limit consists of a calculation giving
upper bounds on the integrand and residues, after which we can use
dominated convergence. This calculation is quite tedious and hence omitted.

The necessary bounds of the elliptic gamma function can be obtained by using the
dif\/ference equation \eqref{eqellgammadiff} to ensure the argument of the
elliptic gamma function is of the form $\Gamma(p^{\gamma} z)$ for $0\leq \gamma \leq 1$, and
using the known asymptotic behavior of the theta functions outside their poles and zeros.

This gives a bound on the integrand for a contour which is at least
$\epsilon>0$ away from any poles of the integrand, and moreover gives us a
summable bound on the residues, thus showing that any residues
corresponding to points not of the form $z=t_0q^n$ must vanish in the limit
(here we use $\alpha_0<\alpha_r$ for $r>0$). However a contour as required
does in general not exist for all values of~$p$.

Therefore choose parameters $u$ in a compact subset $K$ of the complement
of the $p$-independent divisors (i.e.\ such that there are no
$p$-independent pole-collisions of the integrand of $E^m$). For any $p$
for which we can obtain a contour which stays $\epsilon$ away from any
poles of the integrand (for all $u\in K$), we can use our estimates to
bound $|E^m-B^m_{\alpha}|$ uniformly for $u\in K$ and $a=|p|$, with the
bound going to zero as $a\to 0$. As long as $\log(p)$ stays $\epsilon$
away from conditions of the form
$u_r^{-1}u_s^{-1}q^{-n}=p^{l+\alpha_r+\alpha_s}$ ($l,n\in \mathbb{N}$,
$u_r,u_s$ range over the projection of $K$ to the $r$'th and $s$'th
coordinate) the poles of the integrand near the unit circle stay
$\mathcal{O}(\epsilon)$ away from each other and we can f\/ind a desired
contour. Moreover this ensures that the residues we pick up are at least~$\epsilon$ distance away from any other poles.

Note that we only need to consider conditions with $l+\alpha_r+\alpha_s<0$ as the other condition cannot be
satisf\/ied for small enough $p$, this implies there is only a f\/inite set of possible $l$, $r$ and~$s$.
Hence, if we start with small enough $K$ and $\epsilon$, we can ensure that
these excluded values of $p$ form disjoint sets. In particular we can, in
the $p$-plane, create a circle around these disjoint sets, and use the
maximum principle to show that $E^m-B^m_\alpha$ is bounded in absolute
value inside these circles by the maximum of the absolute value on the
circle. As the circle consists entirely of $p$'s for which our estimates
work, we see that inside the circle the dif\/ference is bounded as well (by a
bound corresponding to a slightly larger radius). Hence for all values of
$p$ with $|p|>0$ we f\/ind that $|E^m-B^m_{\alpha}|$ is bounded uniformly in
$u$ and $a=|p|$ with the bound going to zero as $a\to 0$. in particular the
limit holds uniformly for $u\in K$. Finally we can use the
Stieltjes--Vitali theorem again to show the limit holds for all values of
$u$.
\end{proof}

Note that there is some overlap in the conditions of Proposition \ref{propsymbreak} and
Proposition \ref{propsum}. Indeed we get two dif\/ferent representations of the same function (one integral
and one series) in the case of $\alpha \in \mathbb{R}^{2m+6}$ satisfying $\sum_r \alpha_r = m+1$,
$\alpha_0 \leq \alpha_r\leq -\alpha_0$ for $r=1,2$, $\alpha_1+\alpha_2=0$,
$-\alpha_0 \leq \alpha_r \leq 1+\alpha_0$ for $r\geq 3$.

Moreover, in some special cases we have integral representations of the series in Proposition~\ref{propsum}, 
which were not covered in Proposition~\ref{propsymbreak}. Moreover we sometimes f\/ind a second, slightly dif\/ferent, expression
for the integrals of Proposition~\ref{propsymbreak}. Indeed we have
\begin{Proposition}\label{propsumintegral}
For $\alpha \in \mathbb{R}^{2m+6}$ satisfying $\sum_r \alpha_r = m+1$ and $\alpha_0\leq \alpha_1 \leq \cdots \leq \alpha_{2m+5}$
such that $-1/2\leq \alpha_0=\alpha_1 < 0$ and $-\alpha_0 \leq \alpha_2$ and $\alpha_{2m+5}\leq 1+\alpha_0$
the limit in \eqref{eqlim} exists and we have
\begin{gather*}
B^m_\alpha(u) = \prod_{r\geq 2\colon \alpha_r=-\alpha_0} (u_0u_r,u_1u_r;q)
 (q;q) \int_{\mathcal{C}} \frac{\theta(u_0u_1w/z,wz;q)}{\theta(u_0w,u_1w;q)}
\\
\phantom{B^m_\alpha(u) =}{} \times
 \frac{\prod\limits_{r\geq 2\colon \alpha_r=1+\alpha_0} (qz/u_r;q) }{\prod\limits_{r\geq 2\colon \alpha_r=-\alpha_0} (u_rz;q)}
 \frac{1}{(u_0/z,u_1/z;q)}
 \left( \frac{1-z^2}{(u_0z,u_1z;q)} \right)^{1_{\{\alpha_0=-1/2}\}}
 \frac{dz}{2\pi iz};
\end{gather*}
where the contour is a deformation of the unit circle separating the poles in downward sequences from the poles in upward sequences.
\end{Proposition}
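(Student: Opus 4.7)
The approach mirrors the symmetry-breaking technique used in the proof of Proposition~\ref{propsymbreak}, but with a less symmetric specialization of the auxiliary parameters in \eqref{eqemasym}: take $s_0 = t_0$, $s_1 = t_1$, and $s_2 = w p^{-\alpha_0}$ where $w$ is an auxiliary parameter independent of $p$. This scaling is chosen so that after the change of variables $z \to z p^{\alpha_0}$, the factors $s_0 s_1 s_2/z$ and $s_2 z$ become the finite expressions $u_0 u_1 w / z$ and $wz$, while the denominator factors $\theta(s_0 s_2;q) = \theta(u_0 w;q)$ and $\theta(s_1 s_2;q) = \theta(u_1 w;q)$ remain $p$-independent and provide exactly the $w$-dependent denominator of the stated integrand.

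The subsequent simplifications are parallel to \eqref{eqsymbreak3spec}: the difference equation $\theta(z;q)\Gamma(z) = \Gamma(pz)$ absorbs the numerator factors $\theta(t_0 z, t_1 z;q)$ into $\Gamma(t_0 z^{\pm 1})\Gamma(t_1 z^{\pm 1})$, producing $\Gamma(pt_0z)\Gamma(t_0/z)\Gamma(pt_1z)\Gamma(t_1/z)$; the denominator $\theta(z^2;q)$ cancels against $\Gamma(z^{\pm 2})^{-1}$ to give $\theta(z^{-2};p)$; and the prefactor $(t_0t_1;p,q)$ combines with the denominator $\theta(t_0t_1;q)$ via $(x;p,q)/\theta(x;q) = (px;p,q)/(q/x;q)$. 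The remaining prefactors $(t_0t_r, t_1t_r;p,q)$ for $r \geq 2$ contribute $(u_0u_r, u_1u_r;q)$ precisely when $\alpha_r = -\alpha_0$ and tend to $1$ otherwise, while the $(t_rt_s;p,q)$ with $r, s \geq 2$ all tend to $1$ since $\alpha_r + \alpha_s \geq -2\alpha_0 > 0$.

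After substituting $t_r = u_r p^{\alpha_r}$ and performing the variable change, the hypotheses $-1/2 \leq \alpha_0 = \alpha_1 < 0$ and $-\alpha_0 \leq \alpha_r \leq 1+\alpha_0$ for $r \geq 2$ ensure that every remaining elliptic gamma function has argument of the form $p^\gamma \cdot (\text{finite})$ with $\gamma \in [0,1]$. The $p \to 0$ limit is then taken using the formulas for limits of $\Gamma(p^\gamma z)$ from the proof of Proposition~\ref{proptrivlimit}: arguments with $\gamma = 0$ contribute $(\cdot;q)^{-1}$ factors (giving $(u_0/z, u_1/z;q)^{-1}$ and $(u_rz;q)^{-1}$ for $\alpha_r = -\alpha_0$), those with $\gamma = 1$ contribute $(q/\cdot;q)$ factors (giving $(qz/u_r;q)$ for $\alpha_r = 1+\alpha_0$), and those with $\gamma \in (0,1)$ tend to $1$. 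At the boundary $\alpha_0 = -1/2$, the factors $\Gamma(pt_0z), \Gamma(pt_1z)$ have $\gamma = 1 + 2\alpha_0 = 0$ and contribute $(u_0z, u_1z;q)^{-1}$; simultaneously $\theta(z^{-2};p)$ transforms into $\theta(z^{-2}p;p) \to 1 - z^2$. Both contribute to the factor $\left(\frac{1-z^2}{(u_0z, u_1z;q)}\right)^{1_{\{\alpha_0 = -1/2\}}}$.

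The main technical obstacle, as in the proofs of Propositions~\ref{propsymbreak} and~\ref{propsum}, is the contour analysis: one must verify that a single deformation of the unit circle serves uniformly for all sufficiently small $p$ and converges to a contour separating the correct pole sequences of the limiting integrand, so that a dominated-convergence argument justifies the interchange of limit and integration. The Stieltjes--Vitali theorem then extends the result from a dense open set of parameters to all of $\tilde{\mathcal{H}}_m$. The integrand's apparent dependence on the auxiliary $w$ is illusory, since the equality forces it to match the $w$-independent $B^m_\alpha(u)$; direct verification can be given using the theta identity \eqref{eqtheta}.
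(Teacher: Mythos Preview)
Your approach is essentially identical to the paper's: both specialize $s_0=t_0$, $s_1=t_1$ in \eqref{eqemasym}, leave $s_2$ as a free parameter scaled by $p^{-\alpha_0}$, and then substitute $z\to p^{\alpha_0}z$ before plugging in $p=0$. The paper records this as equation \eqref{eqsymbreak2spec} and dispatches the limit in one sentence; your version supplies the intermediate simplifications explicitly and correctly.
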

The theta functions involving the extra parameter $w$ combine to give a
$q$-elliptic function of $w$ and in fact the integrals are independent of
$w$ (though this is only obvious from the fact that~$B_{\alpha}(u)$ does
not depend on $w$). In the case $\alpha_0=\alpha_1=-\alpha_2$, which is
also treated in Proposition~\ref{propsymbreak}, we can specialize $w=u_2$
to re-obtain the previous integral expression of that limit.

\begin{proof}
As in the proof of Proposition \ref{propsymbreak}, we start with the
symmetry broken version of $E^m$, as in \eqref{eqemasym}. Now we
specialize $s_0=t_0$, $s_1=t_1$ and $s_2=w$. Thus we get
\begin{gather}
E^m(t) = \frac{(pt_0t_1;p,q)}{(q/t_0t_1;q)} \prod_{r=0,1} \prod_{s=2}^{2m+5} (t_rt_s;p,q)
\prod_{2\leq r<s\leq 2m+5} (t_rt_s;p,q)
 (p;p) (q;q) \nonumber\\
\phantom{E^m(t) =}{}
 \times \int_{\mathcal{C}}
 \prod_{r=0}^1 \Gamma(pt_rz,t_r/z)
 \prod_{r=2}^{2m+5} \Gamma(t_r z^{\pm 1}) \frac{\theta(t_0t_1w/z,wz;q)}{\theta(t_0w,t_1w;q)} \theta\big(z^{-2};p\big)
 \frac{dz}{2\pi i z}.\label{eqsymbreak2spec}
\end{gather}
Replacing $z\to p^{\alpha_0}z$ and $w\to p^{-\alpha_0}w$ and using
$t_r=p^{\alpha_r}u_r$ we can subsequently plug in $p=0$ as before to obtain
the desired limit.
\end{proof}

\section{The polytopes}\label{secpolytope}
In this section we describe a polytope (for each value of $m$) such that
points of the polytope correspond to vectors $\alpha$ with respect to which
we can take limits. Moreover we describe how the limiting functions
$B_{\alpha}$ depend on geometrical properties of $\alpha$ in the polytope.

Let us begin by def\/ining the polytopes.
\begin{Definition}\label{defPm}
For $m\in \mathbb{N}$ we def\/ine the vectors $\rho^{(m)}$, $v_{j_1j_2 \cdots j_m}^{(m)}$ ($0\leq j_1<j_2<\cdots<j_m\leq 2m+5$) and $w_{ij}^{(m)}$ ($0\leq i<j\leq 2m+5$) by
\begin{equation*}
\rho^{(m)} = \frac12 \sum_{r=0}^{2m+5} e_r, \qquad
v_{j_1j_2 \cdots j_{m+1}}^{(m)} = \sum_{r=1}^{m+1} e_{j_r}, \qquad
w_{ij}^{(m)} = \rho^{(m)} - e_i-e_j,
\end{equation*}
where the $e_k$ ($0\leq k\leq 2m+5$) form the standard orthonormal basis of $\mathbb{R}^{2m+6}$. Sometimes we write $v_{S}^{(m)}$ for $S\subset \{0,1,\ldots, 2m+5\}$ with
$|S|=m+1$.

The polytope $P^{(m)}$ is now def\/ined as the convex hull of the vectors
$v_{S}^{(m)}$ ($|S|=m+1$) and $w_{ij}^{(m)}$ ($0\leq i<j\leq 2m+5$).
In the notation for both vectors and polytopes we often omit the $(m)$ if the value of $m$ is clear from context.
\end{Definition}

We will now state the main results of this section. The proofs follow after
we have stated all theorems. The main result of this section will be the
following theorem.

\begin{Theorem}\label{thmlimitsexist}
For $\alpha\in P^{(m)}$ the limit in \eqref{eqlim} exists and
$B_{\alpha}^m(u)$ depends only on the (open) face of $P^{(m)}$ which
contains $\alpha$, i.e.\ if $\alpha$ and $\beta$ are contained in the same
face of $P^{(m)}$ then $B_{\alpha}^m(u) = B_{\beta}^m(u)$.
\end{Theorem}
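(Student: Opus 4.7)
The plan is to assemble the explicit limit formulas from Propositions~\ref{proptrivlimit}, \ref{propsymbreak} and \ref{propsum} into a single case analysis that covers every $\alpha \in P^{(m)}$, and then to show that the resulting formula depends only on the open face of $P^{(m)}$ on which $\alpha$ sits. First I would use the fact that $E^m$ and $P^{(m)}$ are both invariant under the coordinate permutation action of $S_{2m+6}$ (which permutes the vertex sets $\{v_S\}$ and $\{w_{ij}\}$ among themselves), so that after relabeling we may assume $\alpha_0 \leq \alpha_1 \leq \cdots \leq \alpha_{2m+5}$. Every vertex of $P^{(m)}$ has coordinates in $[-1/2,1]$ summing to $m+1$, hence so does any $\alpha \in P^{(m)}$.

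With this sorting in place, I would split $P^{(m)}$ into three (overlapping) regions. If $\alpha_0 \geq 0$, the hypotheses $0 \leq \alpha_r \leq 1$ of Proposition~\ref{proptrivlimit} hold directly. If $\alpha_1 + \alpha_2 \leq 0$, the sorted ordering together with the fact that $\alpha$ is a convex combination of the explicit vertices $v_S$ and $w_{ij}$ forces the bounds $\beta \leq \alpha_r \leq -\beta$ for $r \leq 2$ and $-\beta \leq \alpha_r \leq 1+\beta$ for $r \geq 3$ required by Proposition~\ref{propsymbreak}, where $\beta = \alpha_0+\alpha_1+\alpha_2$. The remaining region $\alpha_0 < 0$, $\alpha_1 + \alpha_2 > 0$ must be covered by Proposition~\ref{propsum}, and this is where I expect the main technical obstacle to lie: one has to verify that the sum conditions $\sum_{r \in I}(\alpha_r + \alpha_0) \geq 2\alpha_0$ for $|I| = k$ and $2 \leq k \leq m+3$ hold throughout this region. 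The natural way to handle this is to recognize these inequalities as facet-type inequalities of $P^{(m)}$ cut out by specific subfamilies of vertices, and then to check each one vertex by vertex using the explicit coordinates of $v_S$ and $w_{ij}$; the conditions $\alpha_r \geq \alpha_0$ and $\alpha_r \leq 1+\alpha_0$ of the proposition reduce similarly to the coordinatewise bounds already established.

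For the face-dependence, I would observe that in every explicit formula produced by Propositions~\ref{proptrivlimit}--\ref{propsumintegral}, the $\alpha$-dependence enters only through discrete data: the sign conditions on $\alpha_0$ and $\alpha_0+\alpha_1$, which of the critical equalities $\alpha_r \in \{0,1,\pm 1/2,-\alpha_0,1+\alpha_0\}$ hold, and which of the sum inequalities of Proposition~\ref{propsum} are tight. Each such condition is precisely the assertion that $\alpha$ lies on a particular supporting hyperplane of $P^{(m)}$, and conversely the supporting hyperplanes of $P^{(m)}$ are cut out by exactly these linear functionals (again checked vertex by vertex). Hence the list of active conditions at $\alpha$ is constant on each open face, so the formula is as well. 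Finally, on the nonempty overlaps between the three regions one must verify directly that the different expressions coincide; on an overlap both expressions compute the same limit $B^m_\alpha(u)$ for generic $u$, and the equality then extends everywhere by the Stieltjes--Vitali argument already used in the propositions. Together these three steps give existence of $B^m_\alpha$ on all of $P^{(m)}$ and constancy on open faces.
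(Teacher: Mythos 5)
Your existence argument follows the paper's route: decompose $P^{(m)}$ (after sorting by $S_{2m+6}$) into the regions covered by Propositions~\ref{proptrivlimit}, \ref{propsymbreak} and \ref{propsum}, which is exactly the content of Proposition~\ref{propdecomp} together with the bounding inequalities of $P_{\rm I}$, $P_{\rm II}$, $P_{\rm III}$. That part is fine, modulo the vertex-by-vertex checks you defer.

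The face-dependence argument, however, has a genuine gap. You assert that every condition appearing in the explicit formulas (e.g.\ $\alpha_0=0$, $\alpha_1+\alpha_2=0$, $\alpha_r=-\alpha_0$, the tightness of the sum inequalities) ``is precisely the assertion that $\alpha$ lies on a particular supporting hyperplane of $P^{(m)}$,'' and hence that the list of active conditions is constant on each open face of $P^{(m)}$. This is false: these hyperplanes support the \emph{subpolytopes} $P_{\rm I}$, $\sigma(P_{\rm II})$, $\sigma(P_{\rm III})$, but several of them (for instance $\alpha_0=0$ and $\alpha_1+\alpha_2=0$) cut straight through the interior of $P^{(m)}$ and through the relative interiors of the facets $\alpha_i+\alpha_j+\alpha_k+\alpha_l=0$. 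Consequently two points $\alpha,\beta$ in the \emph{same} open face of $P^{(m)}$ can lie in disjoint sub-regions, have different active-condition lists, and receive genuinely different-looking expressions (an integral from Proposition~\ref{proptrivlimit} at one point, a product or series from Proposition~\ref{propsum} at another). Your fallback of checking agreement ``on the nonempty overlaps'' only handles points where two formulas apply simultaneously --- and there agreement is automatic, since both expressions equal $\lim_{p\to 0}E^m(u\cdot p^{\alpha})$ at that same $\alpha$. It does not handle the actual problem of distinct points of one open face of $P^{(m)}$ sitting in different open faces of different subpolytopes. The paper closes this gap in two steps: a lemma showing that the only faces of $P^{(m)}$ that fail to be faces of some $\sigma(P_{\rm I/II/III})$ are the interior and the facets $\alpha_0+\alpha_1+\alpha_2+\alpha_3=0$, followed by an explicit classification of all sub-faces meeting these two exceptional faces and a direct verification (using the $m=0$ evaluation formula) that every one of the resulting integrals and series evaluates to the same function, namely $1$ on the interior and $(u_0u_1u_2u_3;q)$ on the special facet. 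Some version of this explicit evaluation step is unavoidable, and it is missing from your proposal.
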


Next we have the following iterated limit property.
\begin{Theorem}\label{thmiteratedlimits}
Let $\alpha, \beta \in P^{(m)}$. Then the iterated limit property holds, i.e.\ \[
\lim_{x\to 0} B_{\alpha}^m(x^{\beta-\alpha} u) = B_{t\alpha+ (1-t)\beta}(u)
\]
for any $0<t<1$.
\end{Theorem}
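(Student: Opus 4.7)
The plan is to reduce the iterated limit to a single diagonal limit through a well-chosen substitution. Set $s = 1-t \in (0,1)$; making $x = p^s$ a function of $p$, coordinate-wise we have
\[
u_r \cdot x^{\beta_r-\alpha_r} \cdot p^{\alpha_r} = u_r \cdot p^{s(\beta_r-\alpha_r) + \alpha_r} = u_r \cdot p^{(\gamma_t)_r},
\]
where $\gamma_t := t\alpha + (1-t)\beta$ lies in $P^{(m)}$ by convexity. Hence along this particular path in $(x,p)$-space, as $p \to 0$,
\[
E^m\bigl(u \cdot x^{\beta-\alpha} \cdot p^\alpha\bigr)\big|_{x = p^s}
= E^m\bigl(u \cdot p^{\gamma_t}\bigr) \longrightarrow B^m_{\gamma_t}(u)
\]
by Theorem~\ref{thmlimitsexist}. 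Since the iterated limit in the statement is $\lim_{x \to 0}B^m_\alpha(u \cdot x^{\beta-\alpha})$, the task becomes to show that this coincides with the diagonal value computed above.

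For this I would work directly with the explicit formulas of Propositions~\ref{proptrivlimit}, \ref{propsymbreak}, \ref{propsum}, and~\ref{propsumintegral}. Substituting $u \mapsto u \cdot x^{\beta-\alpha}$ into the explicit expression for $B^m_\alpha$ --- a product of $q$-shifted factorials times either a contour integral or a (possibly renormalized very-well-poised) basic hypergeometric series --- one computes the $x \to 0$ asymptotics factor by factor using $(v x^c;q) \to 1$ for $c > 0$, $(v x^c;q) \to (v;q)$ for $c = 0$, and the corresponding limits of the integrand and of the terms of the series. The resulting expression should then match the explicit formula produced by applying the relevant proposition at $\gamma_t$, which (for $0 < t < 1$) lies in the relative interior of the smallest face of $P^{(m)}$ containing both $\alpha$ and $\beta$; this in particular explains why the limit is independent of $t\in(0,1)$, as it must be by Theorem~\ref{thmlimitsexist}.

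The hard part is the case analysis itself: $\alpha$ and $\gamma_t$ may lie in faces of qualitatively different types, so the degenerations to verify include product-to-integral, integral-to-series, and series-to-product passages. The overlapping descriptions provided by Propositions~\ref{propsymbreak}, \ref{propsum}, and~\ref{propsumintegral} are precisely what make these cross-type transitions manageable --- they realize classical identities such as the Nassrallah--Rahman integral representation --- but the bookkeeping of which $q$-shifted factorials survive, which become $1$, and which combine to form new series or integrands is delicate. Once the building-block degenerations (from a vertex to an edge, and from a face to one it bounds) are checked, the general case follows by iterating the reduction along intermediate points of the segment $[\alpha,\beta]$.
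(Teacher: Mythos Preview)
Your diagonal substitution $x=p^{1-t}$ is a pleasant heuristic for identifying the target value, but as you yourself note it does not prove that the genuine iterated limit $\lim_{x\to 0}\lim_{p\to 0}$ agrees with the diagonal one; all the content lies in the direct computation you outline afterward. In that core respect your approach coincides with the paper's: substitute $u\mapsto x^{\beta-\alpha}u$ into an explicit formula for $B^m_\alpha$ and track each factor as $x\to 0$.

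Where the paper differs is in organization, and this is what makes the argument go through cleanly. Rather than attempting cross-type ``building-block'' degenerations and then chaining them along the segment $[\alpha,\beta]$ (an iteration that does not compose in any obvious way, since the iterated-limit statement is about a single limit in $x$, not a sequence of them), the paper reduces at once to the case where $\alpha$ and $\beta$ both lie in one of the closed sub-polytopes $P_{\rm I}^{(m)}$, $P_{\rm II}^{(m)}$, $P_{\rm III}^{(m)}$. For $P_{\rm I}$ and $P_{\rm III}$ the uniform formulas of Propositions~\ref{propuniformtrivlimit} and~\ref{propuniformsymbreak} write $B^m_\alpha$ as an integral of a product of factors $(u^w;q)^{1_{\{w\cdot\alpha=0\}}}$ indexed by sums of bounding inequalities; under $u\mapsto x^{\beta-\alpha}u$ each factor either is constant in $x$ (when $w\cdot\alpha=w\cdot\beta=0$) or tends to $1$ (when $w\cdot\beta>0$), delivering exactly the formula for $B^m_{\gamma_t}$ in a single uniform step, with interchange of limit and integral justified by the contour argument you'd expect. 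The genuinely delicate part is $P_{\rm II}$, where no uniform formula exists: the paper handles the three integral-type facets ($\alpha_0=0$, $\alpha_1+\alpha_2=0$, $\alpha_0=\alpha_1$) and the single-sum interior separately, and the transitions from a facet into the interior require a residue computation plus dominated convergence in the style of the proof of Proposition~\ref{propsum}. A small slip: the degeneration direction is from richer (integral or series at $\alpha$) to simpler (possibly a bare product at $\gamma_t$), not ``product-to-integral''.
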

As $t\alpha+(1-t)\beta$ is contained in the same face of $P^{(m)}$ for all
values $0<t<1$, we already know that the right hand side does not depend on
$t$.

The iterated limit property shows that all the functions associated to
faces can be obtained as limits of the (basic hypergeometric!) functions
associated to vertices of the polytope. There are only two dif\/ferent limits
associated to vertices (as there are only two dif\/ferent vertices up to
permutation symmetry), so all results follow from identities satisf\/ied by
these two functions. Indeed the idea of this article is not so much to show
new identities as it is to show how many known identities f\/it in a uniform
geometrical picture. Moreover this picture allows us to simply classify all
formulas of certain kinds.

As an immediate corollary of the iterated limit property we f\/ind the last
main theorem of this section.

\begin{Theorem}\label{thmorthface}
For $\alpha\in P^{(m)}$ the function $B_{\alpha}(u)$ depends only on the
space orthogonal to the face containing $\alpha$. To be precise if $\beta$
is in the same (open) face as $\alpha$, then
\[
B_{\alpha}(u) = B_{\alpha}(u \cdot x^{\alpha-\beta}).
\]
\end{Theorem}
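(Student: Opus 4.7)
The plan is to derive Theorem~\ref{thmorthface} as a short two-step upgrade of Theorem~\ref{thmiteratedlimits} combined with the face-constancy of $B_\alpha^m$ from Theorem~\ref{thmlimitsexist}.

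First I would establish the asymptotic identity
\[
\lim_{x \to 0} B_\alpha^m\bigl(x^{\alpha - \beta} u\bigr) = B_\alpha^m(u) \qquad (\star)
\]
whenever $\alpha$ and $\beta$ lie in the same open face of $P^{(m)}$. To obtain $(\star)$, I would apply Theorem~\ref{thmiteratedlimits} with the roles of $\alpha$ and $\beta$ interchanged: for any $0<t<1$,
\[
\lim_{x \to 0} B_\beta^m\bigl(x^{\alpha - \beta} u\bigr) = B_{(1-t)\beta + t\alpha}^m(u).
\]
Since $(1-t)\beta + t\alpha$ lies in the same open face as $\alpha$ and $\beta$, Theorem~\ref{thmlimitsexist} identifies the right hand side with $B_\alpha^m(u)$; the same theorem also gives $B_\beta^m \equiv B_\alpha^m$ on the balancing variety, so $(\star)$ follows.

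Second, to upgrade $(\star)$ to the desired pointwise equality, I would apply $(\star)$ with $u$ replaced by $v := x_0^{\alpha - \beta} u$ for an arbitrary fixed parameter $x_0$; this gives
\[
\lim_{x \to 0} B_\alpha^m\bigl(x^{\alpha - \beta} \cdot x_0^{\alpha - \beta} u\bigr) = B_\alpha^m\bigl(x_0^{\alpha - \beta} u\bigr).
\]
After the substitution $y := x x_0$ the left hand side becomes $\lim_{y \to 0} B_\alpha^m(y^{\alpha - \beta} u)$, which by $(\star)$ equals $B_\alpha^m(u)$. Hence $B_\alpha^m(x_0^{\alpha - \beta} u) = B_\alpha^m(u)$ for all sufficiently small positive real $x_0$, and by the analyticity of $B_\alpha^m$ (Stieltjes--Vitali, as invoked in the proofs of Propositions~\ref{proptrivlimit}--\ref{propsum}) this extends to all $x_0 \in \mathbb{C}^*$ by analytic continuation.

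The main potential obstacle is verifying that identity $(\star)$ may legitimately be applied at the shifted parameter $v = x_0^{\alpha - \beta} u$ rather than only at $u$ itself. This reduces to observing that the multiplicative shift $u \mapsto x_0^{\alpha - \beta} u$ preserves the balancing condition (since $\sum_r \alpha_r = \sum_r \beta_r = m+1$ forces $\sum_r(\alpha_r - \beta_r)=0$) and is therefore an analytic automorphism of $\tilde{\mathcal{H}}_m$; as the locus on which $B_\alpha^m$ fails to be analytic has positive codimension, $(\star)$ applies at generic shifted points, which is sufficient for the analytic continuation step. The remainder of the argument is the two-line manipulation of limits given above.
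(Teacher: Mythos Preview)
Your proof is correct and uses the same two ingredients as the paper (Theorems~\ref{thmlimitsexist} and~\ref{thmiteratedlimits}), but the execution differs. The paper does not argue via your two applications of $(\star)$ and analytic continuation; instead it extends the segment from $\beta$ through $\alpha$ to a point $v(\lambda_1)=\lambda_1\alpha+(1-\lambda_1)\beta$ with $\lambda_1>1$ still lying in the \emph{open} face, so that $\alpha$ is a strict convex combination of $v(\lambda_1)$ and $\beta$. A single application of the iterated limit property from $v(\lambda_1)$ towards $\beta$ then gives $B_\alpha(u)=\lim_{y\to 0}B_{v(\lambda_1)}(y^{\lambda_1(\alpha-\beta)}u)$, and the substitution $y\mapsto y\,x^{1/\lambda_1}$ absorbs any fixed $x$ into the limit variable, yielding $B_\alpha(u)=B_\alpha(u\cdot x^{\alpha-\beta})$ for all $x$ at once. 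Your route trades this geometric overshoot trick for a second application of $(\star)$ at the shifted point followed by analytic continuation in $x_0$; both are short, but the paper's version sidesteps the branch/analyticity discussion entirely.
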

\begin{proof}
Consider the line $v(t)=t\alpha + (1-t)\beta$. As $\alpha$ and $\beta$ are
in the same open face there exists $\lambda_1>1$ such that $v(\lambda_1)$ is
also in this face. Moreover $\alpha$ is a strictly convex linear
combination of $v(\lambda_1)$ and $\beta$, and $v(\lambda_1)-\beta =
\lambda_1 (\alpha-\beta)$. Now observe that
\[
B_{\alpha}(u) = \lim_{y\to 0} B_{v(\lambda_1)}(y^{v(\lambda_1)-\beta} u) =
\lim_{y\to 0} B_{v(\lambda_1)}(y^{v(\lambda_1)-\beta} x^{\frac{v(\lambda_1)-\beta}{\lambda_1}} u) =
B_{\alpha}(u \cdot x^{\beta-\alpha})
\]
by the iterated limit property. Here we replaced $y\to y x^{1/\lambda_1}$
in the second equality.
\end{proof}

To prove the f\/irst two main theorems, Theorems \ref{thmlimitsexist} and
\ref{thmiteratedlimits}, we need to split up $P^{(m)}$ in several (to be
precise $1+(2m+6)+ \binom{2m+6}{3}$, but essentially only 3) dif\/ferent
parts. Let us begin with def\/ining the smaller polytopes. Recall the
def\/inition of the vectors $\rho^{(m)}$, $v_S^{(m)}$ and $w_{ij}^{(m)}$ from
Def\/inition \ref{defPm}.
\begin{Definition}
We def\/ine the three convex polytopes $P_{\rm I}^{(m)}$, $P_{\rm II}^{(m)}$ and $P_{\rm III}^{(m)}$ by
\begin{itemize}\itemsep=0pt
\item $P_{\rm I}^{(m)}$ is the convex hull of the vectors $v_S^{(m)}$ ($S\subset \{0,1,\ldots,2m+5\}$);
\item $P_{\rm II}^{(m)}$ is the convex hull of the vectors $v_{S}^{(m)}$ ($S\subset \{1,2,\ldots,2m+5\}$) and $w_{0j}^{(m)}$ ($1\leq j\leq 2m+5$);
\item $P_{\rm III}^{(m)}$ is the convex hull of the vectors $v_S^{(m)}$ ($S\subset\!\{3,4,\ldots,2m+5\}$) and $w_{ij}^{(m)}$ \mbox{($0\leq i<j\leq 2$)}.
\end{itemize}
Here we always have $|S|=m+1$ (otherwise $v_S^{(m)}$ would not make sense).
\end{Definition}
The polytopes $P_{\rm I}^{(m)}$, $P_{\rm II}^{(m)}$ and $P_{\rm III}^{(m)}$ correspond to
limits in Propositions \ref{proptrivlimit}, \ref{propsum}, respectively~\ref{propsymbreak}.
The following proposition allows us to prove things about $P^{(m)}$ by proving them for these
simpler polytopes.

\begin{Proposition}\label{propdecomp}
Denote $\sigma(A) = \{ \sigma(a) ~|~ a\in A\}$ for some permutation $\sigma \in S_{2m+6}$. Then we have
\begin{equation}\label{eqpmdecomposition}
P^{(m)} = P_{\rm I}^{(m)} \cup \bigcup_{\sigma \in S_{2m+6}} \sigma\big(P_{\rm II}^{(m)} \big) \cup \bigcup_{\sigma \in S_{2m+6}} \sigma\big(P_{\rm III}^{(m)}\big).
\end{equation}
\end{Proposition}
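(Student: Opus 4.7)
The containment $\supseteq$ in \eqref{eqpmdecomposition} is immediate: each piece $\sigma(P_J^{(m)})$ is by construction the convex hull of a subset of the vertex set $\{v_S^{(m)}\}\cup\{w_{ij}^{(m)}\}$ of $P^{(m)}$, so $\sigma(P_J^{(m)}) \subseteq P^{(m)}$, and the right hand side is manifestly $S_{2m+6}$-invariant.

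For the reverse inclusion, I would use the $S_{2m+6}$-invariance of both sides to reduce to showing that any $\alpha \in P^{(m)}$ with sorted coordinates $\alpha_0 \le \alpha_1 \le \cdots \le \alpha_{2m+5}$ lies in $P_{\rm I}^{(m)} \cup P_{\rm II}^{(m)} \cup P_{\rm III}^{(m)}$. My plan is to try, in turn, to express $\alpha$ as a convex combination using only the vertices of $P_{\rm I}^{(m)}$, then $P_{\rm II}^{(m)}$, then $P_{\rm III}^{(m)}$, and to show that at least one of these attempts succeeds for every $\alpha \in P^{(m)}$.

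The $P_{\rm I}^{(m)}$-attempt succeeds precisely when $\alpha \in [0,1]^{2m+6}$, by the standard description of the hypersimplex $\{x\in[0,1]^{2m+6}:\sum_r x_r=m+1\}$ as the convex hull of its $0/1$-vertices $v_S^{(m)}$. For the $P_{\rm II}^{(m)}$-attempt, matching the $0$-th coordinate forces $\sum_j \mu_j = -2\alpha_0$ as the total weight on the $w_{0j}^{(m)}$'s, and the remaining balances reduce to a linear program requiring $\mu_r \in [\max(0,-\alpha_0-\alpha_r),\, 1+\alpha_0-\alpha_r]$ with this prescribed sum; once feasible, the residual $\alpha - \sum_j \mu_j w_{0j}^{(m)}$ lies in the hypersimplex spanned by $\{v_S^{(m)}:S\subset\{1,\ldots,2m+5\}\}$. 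For the $P_{\rm III}^{(m)}$-attempt, the weights on the three $w$-vertices $w_{01}^{(m)},w_{02}^{(m)},w_{12}^{(m)}$ are uniquely forced to be $\mu_1=-\alpha_0-\alpha_1$, $\mu_2=-\alpha_0-\alpha_2$, $\mu_3=-\alpha_1-\alpha_2$, and the residual must sit in the corresponding hypersimplex on positions $\{3,\ldots,2m+5\}$.

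The main obstacle is showing that at least one of these three attempts succeeds for every $\alpha \in P^{(m)}$. This requires a case-by-case verification of a collection of piecewise-linear facet-type inequalities --- for example, that if the first two attempts fail then automatically $\alpha_0+\alpha_1$, $\alpha_0+\alpha_2$, and $\alpha_1+\alpha_2$ are all $\le 0$, and the residual coordinates in the third attempt lie in $[0,\, 1+\alpha_0+\alpha_1+\alpha_2]$. These are not consequences of $\sum_r \alpha_r = m+1$ and $\alpha_r\in[-1/2,1]$ alone; they are established by testing the relevant linear inequalities against each vertex type $v_S^{(m)}$, $w_{ij}^{(m)}$ of $P^{(m)}$ and invoking convexity within the appropriate sub-region of $P^{(m)}$ carved out by the failure of the earlier attempts.
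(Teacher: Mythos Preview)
Your approach is viable but genuinely different from the paper's. You reduce to the sorted chamber and aim to show directly that a sorted $\alpha\in P^{(m)}$ lies in $P_{\rm I}^{(m)}\cup P_{\rm II}^{(m)}\cup P_{\rm III}^{(m)}$ via the half-space descriptions of the three pieces. The paper instead argues purely combinatorially on vertex sets: it identifies four ``bad'' configurations of vertices that prevent a set $V$ from lying inside the vertex set of any single $\sigma(P_J^{(m)})$, and then shows that for each bad configuration one can write a strictly convex combination of the offending vertices as a convex combination of ``safer'' vertices, allowing ${\rm ch}(V)$ to be split into convex hulls with strictly fewer bad configurations. Iterating removes all bad sets. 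This never touches bounding inequalities; in fact the paper later \emph{derives} the facet description of $P^{(m)}$ from this decomposition (Proposition~\ref{propboundingineq}), so your logical order is inverted relative to theirs.

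Your final paragraph is where the real content lies, and as written it is imprecise. The ``sub-region carved out by the failure of the earlier attempts'' is not convex, so you cannot test affine inequalities merely on its extreme points. What actually makes your trichotomy work is this: first establish the bounding inequalities of $P_{\rm I}$, $P_{\rm II}$, $P_{\rm III}$ directly (the paper's derivation of these in Proposition~\ref{propboundingineq} does not use the decomposition for those three polytopes and could be invoked first); then observe that for sorted $\alpha\in P^{(m)}$, every $P_{\rm II}$-facet inequality \emph{except} $\alpha_0\le 0$ and $\alpha_1+\alpha_2\ge 0$ is either forced by the sorting or is itself a valid affine inequality on all of $P^{(m)}$ (checkable on every vertex $v_S$, $w_{ij}$), and similarly every $P_{\rm III}$-facet inequality except the three $\alpha_i+\alpha_j\le 0$ follows from membership in $P^{(m)}$. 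Hence failure of both $P_{\rm I}$ and $P_{\rm II}$ for sorted $\alpha$ forces $\alpha_0<0$ and $\alpha_1+\alpha_2<0$, and then all $P_{\rm III}$-conditions hold. This is the argument you need to spell out; your proposal asserts the conclusion but does not supply it.
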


\begin{proof}
It is suf\/f\/icient to show that given any set $V$ of vertices of $P^{(m)}$ their
convex hull can be written as the union of subsets of the polytopes on the
right hand side. If $V$ does not contain one of the following bad sets
\begin{enumerate}\itemsep=0pt
\item $\{w_{ij}, v_{S_1}, v_{S_2} \}$ for $i\in S_1, j\in S_2$;
\item $\{w_{ij}, v_S\}$ for $i,j\in S$;
\item $\{w_{ij}, w_{kl} \}$;
\item $\{w_{ij}, w_{ik}, v_S\}$ for $i\in S$,
\end{enumerate}
where $i$, $j$, $k$, $l$ denote dif\/ferent integers, then $V$ is contained in the
sets of vertices of $P_{\rm I}^{(m)}$ or one of the permutations of $P_{\rm II}$ or
$P_{\rm III}$. This follows from a simple case analysis depending on the number and kind
of $w_{ij}$'s in $V$.

Given any point $p$ in the (closed) convex hull ${\rm ch}(V)$ of $V$, with $p=\sum_{v\in
 V} a_v v$, we can write ${\rm ch}(V)= \bigcup_{v\colon a_v>0} {\rm ch}((V \backslash
\{v\})\cup\{p\})$. Indeed any point $q$ in ${\rm ch}(V)$ can be written as
$q=\sum_{v\in V} b_v v = \gamma p + \sum_{v\in V} (b_v - a_v \gamma) v$,
where we can take $\gamma \geq 0$ to be such that $b_{v'}=a_{v'}\gamma$ for
some $v'$ with $a_{v'}>0$ and $b_v \geq a_v \gamma$ for all $v\in V$. Now
$q$ clearly is a convex linear combination of elements of $(V\backslash\{v'\}) \cup \{p\}$.
This argument is visualized in Fig.~\ref{piccharg}.
As a generalization we obtain that if $p\in {\rm ch}(W)$ for some set $W$ we have
that ${\rm ch}(V) \subset \bigcup_{v\colon a_v>0} {\rm ch}((V\backslash \{v\}) \cup W )$.

\begin{figure}[t]
\centerline{\begin{picture}(100,100)
\put(0,0){\circle*{3}}
\put(100,0){\circle*{3}}
\put(50,87){\circle*{3}}
\put(40,40){\circle*{3}}
\drawline(0,0)(100,0)(50,87)(40,40)(0,0)(50,87)
\drawline(40,40)(100,0)
\put(-10,0){\makebox(0,0){$v_1$}}
\put(110,0){\makebox(0,0){$v_2$}}
\put(60,87){\makebox(0,0){$v_3$}}
\put(50,40){\makebox(0,0){$p$}}
\drawline(10,0)(10,10)
\drawline(20,0)(20,20)
\drawline(30,0)(30,30)
\drawline(40,0)(40,40)
\drawline(50,0)(50,33)
\drawline(60,0)(60,26)
\drawline(70,0)(70,20)
\drawline(80,0)(80,13)
\drawline(90,0)(90,6)
\drawline(5,9)(7,7)
\drawline(10,17)(14,14)
\drawline(15,26)(22,22)
\drawline(20,35)(29,29)
\drawline(25,43)(36,36)
\drawline(30,52)(41,45)
\drawline(35,61)(43,56)
\drawline(40,69)(46,66)
\drawline(45,78)(48,77)
\drawline(55,78)(47,71)
\drawline(60,69)(44,58)
\drawline(65,61)(41,45)
\drawline(70,52)(55,43)
\drawline(75,43)(54,31)
\drawline(80,35)(63,25)
\drawline(85,26)(72,19)
\drawline(90,17)(81,13)
\drawline(95,9)(90,7)
\end{picture}}
\caption{${\rm ch}(v_1,v_2,v_3) = {\rm ch}(v_1,v_2,p) \cup {\rm ch}(v_1,v_3,p) \cup {\rm ch}(v_2,v_3,p)$.}
\label{piccharg}
\end{figure}
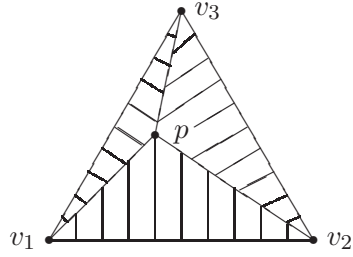

Now we can consider a set of vertices $V$ containing a bad conf\/iguration,
and use the above method to rewrite ${\rm ch}(V) \subset \bigcup_i {\rm ch}(V_i)$,
where the $V_i$ are sets of vertices of $P^{(m)}$ that do not contain that
bad conf\/iguration, while not introducing any new bad conf\/igurations.
Iterating this we end up with ${\rm ch}(V) \subset \bigcup_i {\rm ch}(V_i)$ for some
sets $V_i$ without bad conf\/igurations; in particular ${\rm ch}(V)$ is contained
in the right hand side of \eqref{eqpmdecomposition}.

First we consider a bad set of the form $\{w_{ij}, w_{kl}\}$. Then
$p=\frac12 (w_{ij}+w_{kl}) =\frac12 (v_{T_1} + v_{T_2})$, where $T_1$ and
$T_2$ are any two sets of size $|T_i|=m+1$ with $T_1\cup T_2 \cup
\{i,j,k,l\} = \{0,1,\ldots,2m+5\}$. Thus we get $V_1=(V \cup
\{v_{T_1},v_{T_2}\}) \backslash \{w_{ij}\}$ and $V_2 =(V \cup
\{v_{T_1},v_{T_2}\}) \backslash \{w_{kl}\}$, as new sets. In particular the
number of $w$'s decreases and we can iterate this until no bad sets of the
form $\{w_{ij}, w_{kl}\}$ exist.

For the remaining three bad kind of sets we just indicate the way a strictly convex combination of the vectors in the bad set can be written in terms
of better vectors. In each step we assume there are no bad sets of the previous form, to ensure we do not create any new bad sets (at least not of the
form currently under consideration or of a form previously considered).
\begin{enumerate}\itemsep=0pt
\item For $\{w_{ij}, v_S\}$ with $i,j \in S$ we have $\frac23 w_{ij} + \frac13 v_S = \frac13 (v_{T_1}+v_{T_2}+v_{U})$ where
$S\backslash T_1=S\backslash T_2=\{i,j\}$ and $S\cap U = T_1 \cap U=T_2\cap U = \varnothing$ and $T_1 \cap T_2= S\backslash\{i,j\}$ (thus $T_1$, $T_2$ and $U$ cover all the elements of $S$, except
$i$ and $j$, twice, and all other points once).
\item For $\{w_{ij}, w_{ik},v_S\}$ with $i\in S$ we have $\frac13(w_{ij}+w_{ik}+v_S) = \frac13 (v_{T_1}+v_{T_2}+v_U)$
for $S\backslash T_1=S\backslash T_2=\{i\}$ and $S\cap U = T_1 \cap U=T_2\cap U = \varnothing$ and $T_1 \cap T_2= S\backslash\{i\}$ and $j,k \not \in T_1,T_2,U$.
\item For $\{w_{ij}, w_{ik}, v_S\} \subset V$, with $i\in S$, then $j,k\not \in S$ and $\frac13(w_{ij}+w_{ik}+v_S) = \frac13 (v_{T_1}+v_{T_2}+v_U)$
for $S\backslash T_1=S\backslash T_2=\{i\}$ and $S\cap U = T_1 \cap U =T_2\cap U= \varnothing$ and $T_1 \cap T_2= S\backslash\{i\}$ and $j,k \not \in T_1,T_2,U$. \hfill \qed
\end{enumerate}
\renewcommand{\qed}{}
\end{proof}

Let us now consider the bounding inequalities related to these polytopes.
\begin{Proposition}\label{propboundingineq}
The polytopes $P^{(m)}$, $P_{\rm I}^{(m)}$, $P_{\rm II}^{(m)}$ and $P_{\rm III}^{(m)}$ are the subsets
of the hyperplane $\{\alpha\colon \alpha\in \mathbb{R}^{2m+6}|\sum_i \alpha_i = m+1\}$
described by the following bounding inequalities
\begin{itemize}\itemsep=0pt
\item For $P^{(m)}$ the bounding inequalities are
\begin{alignat*}{3}
&-\frac12 \leq \alpha_i \leq 1,\qquad &&(0\leq i\leq 2m+5), & \\
& \alpha_i\le 1+ \alpha_j+\alpha_k+\alpha_l, \qquad && (|\{i,j,k,l\}|=4), & \\
& \alpha_i-\alpha_j \leq 1,\qquad && (i\neq j), & \\
& (|S|-2)\alpha_i+\sum_{j\in S}\alpha_j\ge 0, \qquad &&(i\not\in S,3\le |S|\le m+3).&
\end{alignat*}
For $m=0$ the equations $\alpha_r\leq 1$ and $\alpha_i\le 1+\alpha_j+\alpha_k+\alpha_l$
are valid but not bounding.
\item The polytope $P_{\rm I}^{(m)}$ is described by the bounding inequalities
\[
0\leq \alpha_i \leq 1, \qquad (0\leq i \leq 2m+5).
\]
For this polytope too, if $m=0$ the equations $\alpha_r\leq 1$ are valid but not bounding.
\item
The polytope $P_{\rm II}^{(m)}$ is described by the bounding inequalities
\begin{alignat*}{3}
& -1/2 \leq \alpha_0, &&& \\
& \alpha_r-\alpha_0 \leq 1, \qquad && (r\geq 1), & \\
& (|S|-2)\alpha_0+ \sum_{j\in S}\alpha_j\ge 0, \qquad && (0\not \in S, 0\le |S|\le m+3).&
\end{alignat*}
\item
Finally, the polytope $P_{\rm III}^{(m)}$ is described by the bounding
inequalities
\begin{alignat*}{3}
& \alpha_i+\alpha_j \leq 0, \qquad &&(0\leq i<j\leq 2), & \\
& -\alpha_i \leq \alpha_0+\alpha_1+\alpha_2, \qquad &&(3\leq i \leq 2m+5),& \\
& \alpha_i-1 \leq \alpha_0+\alpha_1+\alpha_2, \qquad &&(3\leq i \leq 2m+5).&
\end{alignat*}
If $m=0$ the equations $\alpha_i-1 \leq \alpha_0+\alpha_1+\alpha_2$ are valid but not bounding.
\end{itemize}
\end{Proposition}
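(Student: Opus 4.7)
My plan is to prove each of the four polytope descriptions by establishing both inclusions: every listed vertex satisfies every listed inequality (so the convex hull lies in the inequality polytope), and conversely every point satisfying all the inequalities is a convex combination of the listed vertices. The first direction is a finite case check using the explicit formulas $v_S^{(m)} = \sum_{r\in S} e_r$ and $w_{ij}^{(m)} = \rho^{(m)} - e_i - e_j$. For instance, the family $(|S|-2)\alpha_i + \sum_{j\in S}\alpha_j \ge 0$ evaluated at $v_T$ gives $(|S|-2)\mathbf{1}_{i\in T} + |S\cap T|$, and at $w_{kl}$ gives $(|S|-2)(\tfrac12 - \mathbf{1}_{i\in\{k,l\}}) + \tfrac{|S|}{2} - |S\cap\{k,l\}|$, both of which are nonnegative in the range $3\le |S|\le m+3$ by elementary bounds on the intersection sizes.

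For the reverse inclusion I would proceed from the simplest polytope to the most complex. The polytope $P_{\rm I}^{(m)}$ is the $(m+1)$st hypersimplex in $\mathbb{R}^{2m+6}$, whose description by $0\le \alpha_i\le 1$ together with $\sum \alpha_i = m+1$ is classical; the upper bound $\alpha_i\le 1$ is a facet only for $m\ge 1$, since for $m=0$ it is implied by the other constraints. For $P_{\rm III}^{(m)}$, I would set $\beta = \alpha_0+\alpha_1+\alpha_2$; the inequalities then decouple into constraints on $(\alpha_0,\alpha_1,\alpha_2)$ (which, given $\beta\le 0$, place this triple in a triangle with vertices that are permutations of $(-1/2,-1/2,1+\beta)$ style patterns matching the $w_{ij}$ for $0\le i<j\le 2$) and independent constraints $-\beta \le \alpha_i\le 1+\beta$ for $i\ge 3$ together with the sum condition. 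Any feasible point can then be written as a convex combination of the listed vertices by peeling off contributions from the first three and last $2m+3$ coordinates essentially independently.

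The polytope $P_{\rm II}^{(m)}$ is the technical heart of the proposition, since the $|S|$-indexed family $(|S|-2)\alpha_0 + \sum_{j\in S}\alpha_j \ge 0$ couples all coordinates. My plan is to slice by fixing $\alpha_0 = -c \in [-1/2,0]$ (where the upper bound $c\le 0$ comes from the $|S|=0$ member of the family) and analyze the $(2m+5)$-dimensional slice; the remaining inequalities on $(\alpha_1,\ldots,\alpha_{2m+5})$ read $-c \le \alpha_j \le 1-c$ together with partial-sum lower bounds $\sum_{j\in S}\alpha_j \ge (2-|S|)c$ for $|S|\ge 2$ and the sum constraint $\sum \alpha_j = m+1+c$. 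I would then argue, by induction on the number of coordinates strictly greater than $-c$, that any slice-feasible point is a convex combination of vertices of the slice, these being precisely the restrictions of the $v_S^{(m)}$ and $w_{0j}^{(m)}$.

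Finally, the description of $P^{(m)}$ follows by combining the three sub-polytope descriptions using Proposition \ref{propdecomp}: given $\alpha$ satisfying the stated $P^{(m)}$ inequalities, I would sort its coordinates (passing through an appropriate permutation $\sigma \in S_{2m+6}$) and show that the sorted vector satisfies either the $P_{\rm I}$ inequalities (if the smallest coordinate is $\ge 0$), the $P_{\rm III}$ inequalities (if the sum of the three smallest coordinates is $\le 0$), or otherwise the $P_{\rm II}$ inequalities; this case split exhausts all possibilities. The main obstacle will be confirming completeness of the $|S|$-indexed inequality family, i.e.\ that no facets have been missed and that the listed inequalities are actually facet-defining in the generic regime; I would verify facet-dimensionality by exhibiting, for each claimed bounding hyperplane, a set of listed vertices lying on it whose affine span has codimension one. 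The $m=0$ caveats noted in the statement (that $\alpha_i\le 1$, $\alpha_i\le 1+\alpha_j+\alpha_k+\alpha_l$, and $\alpha_i-1\le \alpha_0+\alpha_1+\alpha_2$ are valid but not facet-defining) fall out naturally from this dimension count, since in these degenerate cases the hyperplanes in question fail to contain enough vertices to be facets.
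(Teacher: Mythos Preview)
Your approach is viable but takes a different route from the paper.  The paper never constructs convex combinations; instead, for each of $P_{\rm I}$, $P_{\rm II}$, $P_{\rm III}$ it exploits the permutation symmetry ($S_{2m+6}$, $S_1\times S_{2m+5}$, $S_3\times S_{2m+3}$ respectively) to pass to a fundamental Weyl chamber.  The intersection with the closed chamber is in each case a simplex on $2m+6$ vertices, so every candidate facet normal $\mu$ is found by omitting one vertex and solving the resulting homogeneous linear system; redundant candidates are then discarded and the survivors are checked to be facet-defining.  For $P^{(m)}$ itself the paper argues dually to you: since $P^{(m)}$ is the union of the sub-polytopes (Proposition~\ref{propdecomp}), every facet of $P^{(m)}$ is already a facet of some sub-polytope, and one simply tests each sub-polytope facet inequality against the full vertex set of $P^{(m)}$.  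Your primal route (hypersimplex for $P_{\rm I}$, the join structure for $P_{\rm III}$, slicing for $P_{\rm II}$) is more explicit and arguably cleaner for $P_{\rm I}$ and $P_{\rm III}$, but the paper's method is uniform across all three and avoids the induction you would need for $P_{\rm II}$, which is the part of your sketch that is least developed.

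There is one concrete slip in your case split for $P^{(m)}$.  If the sorted vector has $\alpha_0<0$ and $\alpha_0+\alpha_1+\alpha_2\le 0$ but $\alpha_1+\alpha_2>0$ (for instance $\alpha=(-2/5,\,1/10,\,1/5,\,21/50,\ldots,21/50)$ when $m=1$, which one checks satisfies all the $P^{(1)}$ inequalities), your rule sends it to $P_{\rm III}$, yet the $P_{\rm III}$ constraint $\alpha_1+\alpha_2\le 0$ fails.  The correct trichotomy is: $\alpha_0\ge 0$ gives $P_{\rm I}$; $\alpha_1+\alpha_2\le 0$ gives $P_{\rm III}$; and otherwise $P_{\rm II}$.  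In the last branch the only $P_{\rm II}$ inequality not already among the $P^{(m)}$ inequalities or implied by sorting is the $|S|=2$ case $\alpha_j+\alpha_k\ge 0$, whose worst instance is precisely $\alpha_1+\alpha_2\ge 0$, which is the branch hypothesis.
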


\begin{proof}
It can be immediately verif\/ied that the vertices of the polytopes
$P^{(m)}$, $P_{\rm I}^{(m)}$, $P_{\rm II}^{(m)}$ and~$P_{\rm III}^{(m)}$ satisfy the
relevant inequalities, hence so does any convex linear combination of
them. In particular it is clear that the polytopes are contained in the
sets def\/ined by these bounding inequalities.

Note that the dif\/ferent polytopes have symmetries of $S_{2m+6}$ (for $P^{(m)}$
and $P_{\rm I}^{(m)}$), respectively $S_1 \times S_{2m+5}$ ($P_{\rm II}^{(m)}$),
respectively $S_3 \times S_{2m+3}$ ($P_{\rm III}^{(m)}$). We only have to f\/ind
the bounding inequalities of these polytopes intersected with a
Weyl chamber of the relevant symmetry group, as all bounding inequalities
will be permutations of these. These bounding inequalities can be written
in the form $\mu \cdot \alpha \geq 0$ for each $\alpha$ in the polytope; we
do not need af\/f\/ine equations as we have $\sum_r \alpha_r=m+1$.

A bounding inequality must attain equality at a codimension 1 space of the
vertices of the polytope; in particular if we consider all subsets $V$ of
$2m+5$ vertices of the intersection of each polytope with the relevant Weyl
chamber and insist on $\mu \cdot v=0$ for each $v\in V$, we f\/ind all
bounding inequalities (and perhaps some more inequalities). For $P_{\rm I}$,
$P_{\rm II}$ and $P_{\rm III}$ we are in the circumstance that there are $2m+6$
vertices for the intersection of the Weyl chamber with the polytope; in
particular each set of $2m+5$ vertices corresponds to leaving one vector
out. Moreover the sign of $\mu$ is then determined by insisting on $\mu
\cdot v>0$ for the remaining vertex $v$. As the equations are all
homogeneous the normalization of $\mu$ is irrelevant.

Let us consider the case of $P_{\rm II}$. The set of relevant vertices is
$\{v_S, w_{01},e_1-e_2,\ldots,e_{2m+4}-e_{2m+5}\}$ for $S=\{m+6, \ldots, 2m+5\}$.
We now have the following options for leaving one vector out.
\begin{enumerate}\itemsep=0pt
\item If $\mu \cdot v_S^{(m)}>0$ we get $\mu=\rho + (m+1)e_0$, thus the equation $\alpha_0\geq -1/2$.
\item If $\mu \cdot w_{01}^{(m)} >0$ we get $\mu = -e_0$ and the equation $\alpha_0\leq 0$.
\item If $\mu \cdot (e_i-e_{i+1})>0$ for $1\leq i\leq m+3$ we get $\mu = (i-2) e_0 + \sum_{r=1}^i e_r$ and the equation becomes
$(i-2)\alpha_0 + \sum_{r=1}^{i} e_r \geq 0$.
\item If $\mu \cdot e_i-e_{i+1}>0$ for $m+4\leq i\leq 2m+4$ we get $\mu = (m+2)(2m+5-i) e_0 + (2m+5-i)\sum_{r=1}^i e_r + (m+4-i)\sum_{r=i+1}^{2m+5} e_r$
and the equation becomes $(\alpha_0 +1)(2m+5-i) \geq \sum_{r=i+1}^{2m+5} \alpha_r$.
\end{enumerate}
Note that the equation $\alpha_0\leq 0$ is the $|S|=0$ case of
$(|S|-2)\alpha_0+\sum_{j\in S}\alpha_j\ge 0$. Now the last set of
equations all follow from the instance $i=2m+4$, i.e.\ $\alpha_0 +1\geq
\alpha_{2m+5}$ and the equation $\alpha_{2m+5} \geq \alpha_{r}$. The rest
are true bounding inequalities. It is only hard to see that the solutions
to $(i-2)\alpha_0 + \sum_{r=1}^{i} e_r = 0$ in the set of vertices of the
polytope span a codimension one space; however the set $\{w_{01}, \ldots,
w_{0i}\} \cup \{ v_{T}~|~ T\subset \{ i+1,\ldots,2m+5\}\}$ does span a set
of codimension one.

In a similar way one obtains the bounding inequalities for $P_{\rm I}$ and
$P_{\rm III}$, we omit the explicit calculations here. To obtain the bounding
inequalities of $P$ itself, we observe that any bounding inequality of $P$
must be a bounding inequality of one of $P_{\rm I}$, $P_{\rm II}$, $P_{\rm III}$ or one
of their permutations, as $P$ is the union of those polytopes. Indeed any
of these equations which are valid on $P$ are bounding inequalities (as the
span of the set of vertices for which equality holds does not reduce in
dimension when going from a smaller polytope to $P$). Thus we can f\/ind the
bounding inequalities for $P$ by checking which of the bounding
inequalities of these smaller polytopes are valid on $P$. This we only need
to check on the vertices of $P$, which is a straightforward calculation.

Note that we could also have obtained the bounding inequalities for $P$ in
the same way that we obtained those of $P_{\rm I}$, $P_{\rm II}$ and
$P_{\rm III}$. However now we would have to take $2m+5$ vectors from the set
$\{v_{S}, w_{01}, e_0-e_1, \ldots, e_{2m+4}-e_{2m+5}\}$, which has $2m+7$
elements. The number of options therefore becomes quite large, thus we
prefer to avoid this method.
\end{proof}

We would like to give special attention to the bounding inequalities of
$P^{(1)}$, which is the polytope which interests us most. We can rewrite
these bounding inequalities in a clearly $W(E_7)$ invariant way.

\begin{Proposition}\label{propboundingineqm1}
The bounding inequalities for $P^{(1)}$ inside the subspace
$\alpha\cdot\rho = 1$ are given by
\begin{alignat*}{3}
& \alpha \cdot \delta \leq 1, \qquad && (\delta \in R(E_7)), &\\
& \alpha \cdot \mu \leq 2,\qquad &&(\mu \in \Lambda(E_8), \mu \cdot \rho=1, \mu \cdot \mu=4)&
\end{alignat*}
for $\alpha \in P^{(1)}$. Here $\Lambda(E_8)= \mathbb{Z}^8 \cup
(\mathbb{Z}^8 + \rho)$ is the root lattice of $E_8$.
\end{Proposition}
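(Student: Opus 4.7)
The plan is to deduce the proposition from Proposition~\ref{propboundingineq} (specialized to $m=1$) by rewriting each bounding inequality listed there in one of the two inner-product forms, and conversely showing that every inequality of the stated forms is a $W(E_7)$-translate of such a bounding inequality. The role of $W(E_7)$-symmetry is central: the vertex set $\{e_i+e_j\}\cup\{\rho-e_i-e_j\}$ of $P^{(1)}$ coincides with the orbit $S$ of Definition~\ref{defe8} (since $W(E_7)=\Stab_{W(E_8)}(\rho)$ and $\{v\in R(E_8):v\cdot\rho=1\}$ is $W(E_7)$-stable), so $P^{(1)}$ itself is $W(E_7)$-invariant, and hence so is its set of bounding inequalities.

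Next I would translate each of the families of inequalities in Proposition~\ref{propboundingineq} (with $m=1$) into the requested form, exploiting $\alpha\cdot\rho=1$. The inequality $\alpha_i-\alpha_j\le1$ is already $\alpha\cdot(e_i-e_j)\le1$ with $e_i-e_j\in R(E_7)$. The $|S|=3$ case of $(|S|-2)\alpha_i+\sum_{j\in S}\alpha_j\ge0$ becomes $\sum_{j\in T}\alpha_j\ge0$ for $T=S\cup\{i\}$ of size $4$; after applying $\sum_r\alpha_r=2$ this is equivalent to $\alpha\cdot\delta\le1$ for the half-integer root $\delta=\frac12(\sum_{j\notin T}e_j-\sum_{j\in T}e_j)\in R(E_7)$. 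The four remaining families rescale to the $\mu$-form $\alpha\cdot\mu\le2$: $\alpha_i\le1$ becomes $\alpha\cdot(2e_i)\le2$; $\alpha_i\ge-1/2$ becomes $\alpha\cdot(\rho-2e_i)\le2$; $\alpha_i\le1+\alpha_j+\alpha_k+\alpha_l$ becomes $\alpha\cdot\mu\le2$ with $\mu$ the half-integer vector carrying $3/2$ at position $i$, $-1/2$ at positions $j,k,l$ and $1/2$ at the remaining four coordinates; and the $|S|=4$ case becomes $\alpha\cdot(-e_i+\sum_{r\in R}e_r)\le2$ with $|R|=3$, $i\notin R$. A short direct computation verifies in each case that the vector involved lies in $\Lambda(E_8)$ with $\rho$-dot $1$ and squared norm $4$ (or in $R(E_7)$ for the first two).

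For the converse I would use the standard fact that $R(E_7)$ is a single $W(E_7)$-orbit, together with the analogous statement that $\{\mu\in\Lambda(E_8):\mu\cdot\rho=1,\ \mu\cdot\mu=4\}$ is also a single $W(E_7)$-orbit, verified by producing, for each of the four shapes enumerated above ($2e_i$, three-plus-one-minus, and the two half-integer types), an explicit chain of reflections carrying it to the representative $2e_0$. Combined with the $W(E_7)$-invariance of the bounding inequalities of $P^{(1)}$, this shows that every inequality in the statement of the proposition is a $W(E_7)$-translate of a bounding inequality from Proposition~\ref{propboundingineq}, and hence is itself bounding. The main obstacle is the bookkeeping in the translation step, the most delicate case being the half-integer $\mu$ associated to $\alpha_i\le1+\alpha_j+\alpha_k+\alpha_l$, whose coefficients must be arranged so that $\mu\cdot\alpha=\alpha_i-\alpha_j-\alpha_k-\alpha_l+1$ after using $\sum_r\alpha_r=2$ to absorb the four free entries.
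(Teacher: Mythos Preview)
Your proposal is correct and follows essentially the same route as the paper: both arguments reduce to Proposition~\ref{propboundingineq} by matching each of its six families of bounding inequalities (for $m=1$) with one of the two $S_8$-types of $\delta\in R(E_7)$ or four $S_8$-types of $\mu$, and your explicit translations agree with the paper's list $\delta\in\{e_i-e_j,\ \rho-e_i-e_j-e_k-e_l\}$, $\mu\in\{2e_i,\ e_i+e_j+e_k-e_l,\ \rho-2e_i,\ \rho+e_i-e_j-e_k-e_l\}$. The one unnecessary detour is your converse via $W(E_7)$-transitivity: since you have already (implicitly) exhausted the $S_8$-types of $\delta$ and $\mu$ and shown each one literally appears among the inequalities of Proposition~\ref{propboundingineq}, the converse is immediate without invoking single-orbit statements or the $W(E_7)$-invariance of $P^{(1)}$.
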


\begin{proof}
Up to $S_8$ one can classify the roots of $E_7$, giving $\delta = e_i-e_j$
or $\delta = \rho-e_i-e_j-e_k-e_l$, which handles the bounding inequalities
$\alpha_i-\alpha_j\leq 1$ and $\alpha_i+\alpha_j+\alpha_k+\alpha_l\leq
0$. Similarly we can classify all relevant $\mu\in \Lambda(E_8)$ as
$\mu=2e_i$, $\mu=e_i+e_j+e_k-e_l$, $\mu = \rho - 2e_i$ and $\mu = \rho
+e_i-e_j-e_k-e_l$, the corresponding equations are again directly related
to the bounding inequalities of $P^{(1)}$ as given in Proposition
\ref{propboundingineq}.
\end{proof}

It is convenient to rewrite the integral limits of Propositions
\ref{proptrivlimit} and \ref{propsymbreak} in a uniform way which clearly
indicates the bounding inequalities for the corresponding polytopes
(i.e.\ $P_{\rm I}$, resp. $P_{\rm III}$). It is much harder to give such a uniform
expression for $P_{\rm II}$ (and we need separate expressions for the
intersection with $P_{\rm I}$ and $P_{\rm III}$ and the facet
$\{\alpha_0=1+\alpha_{2m+5}\}$), so we omit those.

\begin{Proposition}\label{propuniformtrivlimit}
Define vectors $v_j = e_j$ and $w_j = e_j - \frac{2}{m+1}\rho $, then the
bounding inequalities for $P_{\rm I}^{(m)}$ become $v_j \cdot \alpha\geq 0$, $w_j
\cdot \alpha \geq 0$ $($and the condition $2\rho \cdot \alpha = m+1)$. The
limit can be written as
\[
B_{\alpha}^m(u) = \prod_{j\neq k \colon v_j \cdot \alpha=v_k\cdot \alpha=0} (u^{v_j+v_k};q) \frac{(q;q)}{2}
\int \frac{(z^{\pm 2};q) \prod\limits_{j\colon w_j \cdot \alpha=0} (u^{w_j} z^{\pm 1};q)}{\prod\limits_{j\colon v_j\cdot \alpha=0} (u^{v_j} z^{\pm 1};q)} \frac{dz}{2\pi i z}.
\]
\end{Proposition}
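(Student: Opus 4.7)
The plan is to verify that Proposition~\ref{propuniformtrivlimit} is nothing more than a coordinate-free repackaging of Proposition~\ref{proptrivlimit}, with the geometric data encoded by the vectors $v_j$, $w_j$ matching, on the one hand, the bounding inequalities of $P_{\rm I}^{(m)}$ from Proposition~\ref{propboundingineq}, and on the other hand the combinatorial conditions ($\alpha_r=0$ and $\alpha_r=1$) that select which factors appear in the integrand of Proposition~\ref{proptrivlimit}. So the proof proceeds by direct translation.

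First I would translate the inequalities. Since $v_j=e_j$, the condition $v_j\cdot\alpha\ge 0$ is $\alpha_j\ge 0$. Using the balancing condition in the form $2\rho\cdot\alpha = \sum_r\alpha_r = m+1$, so that $\rho\cdot\alpha=(m+1)/2$, one computes $w_j\cdot\alpha=\alpha_j-1$, recovering (up to sign) the upper bound $\alpha_j\le 1$. By Proposition~\ref{propboundingineq} these are exactly the bounding inequalities of $P_{\rm I}^{(m)}$, and the equality cases read $v_j\cdot\alpha=0\Longleftrightarrow \alpha_j=0$ and $w_j\cdot\alpha=0\Longleftrightarrow \alpha_j=1$, which are precisely the two conditions that select the factors appearing in the integrand and prefactor of Proposition~\ref{proptrivlimit}.

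Next I would evaluate the monomials. Clearly $u^{v_j}=u_j$ and $u^{v_j+v_k}=u_ju_k$, so the prefactor $\prod_{j\ne k:\,v_j\cdot\alpha=v_k\cdot\alpha=0}(u^{v_j+v_k};q)$ matches $\prod_{\alpha_r=\alpha_s=0}(u_ru_s;q)$ (with the diagonal $j=k$ correctly excluded). For the $w_j$-factors the balancing condition $\prod_r u_r=q^{m+1}$ gives $u^{2\rho}=q^{m+1}$, well-defined on $\tilde{\mathcal H}_m$ thanks to the equivalence $u\sim -u$, hence $u^{2\rho/(m+1)}=q$ and therefore $u^{w_j}=u_j\cdot u^{-2\rho/(m+1)}=u_j/q$ (or equivalently $q/u_j$ under the sign convention that makes $w_j\cdot\alpha\ge 0$). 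Because $z$ and $z^{-1}$ enter symmetrically through the $z^{\pm 1}$ notation, the resulting factors $(u^{w_j}z^{\pm 1};q)$ for $j$ with $w_j\cdot\alpha=0$ coincide with $(q/u_r z^{\pm 1};q)$ for $r$ with $\alpha_r=1$. Substituting everything into the formula of Proposition~\ref{proptrivlimit} yields the displayed expression.

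The only step that requires any care, and which I would flag as the main point of the argument, is the use of the balancing condition to pin down $u^{2\rho/(m+1)}=q$; once that identity is in hand the rest of the proof is routine notational bookkeeping.
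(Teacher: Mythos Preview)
Your approach matches the paper's, which simply asserts that the proposition is a rewriting of Proposition~\ref{proptrivlimit}; you have supplied the bookkeeping the paper omits.

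One small point deserves correction. You correctly observe that with $w_j=e_j-\tfrac{2}{m+1}\rho$ one gets $w_j\cdot\alpha=\alpha_j-1$ and $u^{w_j}=u_j/q$, so that the stated inequality $w_j\cdot\alpha\ge 0$ and the desired factor $(q/u_r\,z^{\pm 1};q)$ both come out with the wrong sign. However, your remark that the $z^{\pm 1}$ symmetry repairs this is not right: $(u_j q^{-1} z^{\pm 1};q)$ and $(q u_j^{-1} z^{\pm 1};q)$ are genuinely different products, and swapping $z\leftrightarrow z^{-1}$ does not turn one into the other. The discrepancy is simply a sign convention (equivalently a typo) in the definition of $w_j$: with $w_j=\tfrac{2}{m+1}\rho-e_j$ one has $w_j\cdot\alpha=1-\alpha_j\ge 0$ and $u^{w_j}=q/u_j$, and then both the inequality and the integrand match Proposition~\ref{proptrivlimit} on the nose. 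Once you make that adjustment your verification is complete; the point you flag about $u^{2\rho/(m+1)}=q$ via the balancing condition is indeed the only nontrivial step.
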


\begin{Proposition}\label{propuniformsymbreak}
Define the vectors $v_j = e_0+e_1+e_2-e_j$ $(0\leq j\leq 2)$, $w_j =
e_0+e_1+e_2+e_j$ $(3\leq j\leq 2m+5)$, and $x_j = e_0+e_1+e_2-e_j -
\frac{2}{m+1} \rho$ $(3\leq j\leq 2m+5)$, then the bounding inequalities
for $P_{\rm III}^{(m)}$ can be written as $v_j \cdot \alpha\geq 0$, $w_j \cdot
\alpha \geq 0$ and $x_j \cdot \alpha \geq 0$ $($together with $2\rho \cdot
\alpha = m+1)$. Let $y=w_j+x_j$ $($note $y$ is independent of $j)$ then
\begin{gather*}
B_\alpha^m(u) = \frac{\prod\limits_{j\colon w_j \cdot \alpha =0} \prod\limits_{k\colon v_k \cdot \alpha=0} (u^{w_j+u_k};q)}{\prod\limits_{k\colon v_k\cdot \alpha=0} (qu^{v_k};q)}
\left( \prod_{r\neq s\colon v_r\cdot \alpha = v_s\cdot \alpha =0} (qu^{y+v_r+v_s};q) \right)^{1_{\{y\cdot \alpha=0\}}} \\ \phantom{B_\alpha^m(u) =}{} \times
\int \theta(1/z;q) \frac{\prod\limits_{j\colon x_j \cdot \alpha=0} (q^2 z u^{x_j};q)}{\prod\limits_{j\colon w_j\cdot \alpha=0} (zu^{w_j};q)}
\frac{\prod\limits_{r\neq s\colon v_r\cdot \alpha= v_s\cdot \alpha =0} (qu^{v_r+v_s}/z;q)}{\prod\limits_{r\colon v_r\cdot \alpha=0} (u^{v_r}/z;q)} \\
 \phantom{B_\alpha^m(u) =}{}\times
\left( \frac{(1-qu^{y} z^2) \prod\limits_{r\neq s\colon v_r\cdot \alpha=v_s\cdot \alpha=0} (q^2 z u^{y+v_r+v_s};q)}{\prod\limits_{r\colon v_r\cdot \alpha=0} (qzu^{y+v_r};q)} \right)^{1_{\{y\cdot \alpha=0\}}} \frac{dz}{2\pi i z}.
\end{gather*}
\end{Proposition}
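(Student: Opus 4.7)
The plan is to verify the two assertions of the proposition separately: first, the geometric claim that the bounding inequalities of $P_{\rm III}^{(m)}$ are captured by the conditions $v_j\cdot\alpha\geq 0$, $w_j\cdot\alpha\geq 0$, $x_j\cdot\alpha\geq 0$ (together with the affine condition $2\rho\cdot\alpha=m+1$), and second, the integral formula for $B^m_\alpha(u)$, which should reproduce the content of Proposition~\ref{propsymbreak}.

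For the bounding inequalities, I would compute the pairings directly. Using $\rho\cdot\alpha = (m+1)/2$, one obtains $v_0\cdot\alpha = \alpha_1+\alpha_2$ (and cyclically for $v_1$, $v_2$), $w_j\cdot\alpha = \alpha_0+\alpha_1+\alpha_2+\alpha_j$, and $x_j\cdot\alpha = \alpha_0+\alpha_1+\alpha_2-\alpha_j-1$ for $j\geq 3$. Up to sign conventions these match the three families of inequalities listed in Proposition~\ref{propboundingineq} for $P_{\rm III}^{(m)}$, giving the claimed reformulation.

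For the integral formula, rather than repeatedly reducing to Proposition~\ref{propsymbreak} case-by-case, I would reprove it directly. Start from the symmetry-broken form \eqref{eqemasym}, specialize $s_0=t_0$, $s_1=t_1$, $s_2=t_2$ to obtain \eqref{eqsymbreak3spec}, substitute $t_r = u_r p^{\alpha_r}$, and shift the integration variable by $z\to z p^{\beta}$ with $\beta = \alpha_0+\alpha_1+\alpha_2$, exactly as in the proof of Proposition~\ref{propsymbreak}. The surviving contribution of each factor in the integrand (an elliptic gamma, a $(p,q)$-factorial, or a theta function) is controlled by a $p$-scaling exponent, and these exponents turn out to be precisely the inner products $v_k\cdot\alpha$, $w_j\cdot\alpha$, $x_j\cdot\alpha$, or $y\cdot\alpha = w_j\cdot\alpha + x_j\cdot\alpha$ (independent of $j$ and equal to $2\beta-1$). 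Each factor either contributes a $(u^{\bullet};q)$ when its exponent vanishes, or drops to $1$ when it is strictly positive; the polytope conditions exclude negative exponents. Collecting these contributions produces the uniform formula without having to split into the three cases of Proposition~\ref{propsymbreak}.

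The last step is to verify that the $1_{\{y\cdot\alpha=0\}}$ exponent correctly reproduces the $1_{\{\alpha_0=-1/2\}}$ and $1_{\{\beta=-1/2\}}$ exponents of Proposition~\ref{propsymbreak}; this amounts to observing that $y\cdot\alpha = 2\beta\mp 1$ (depending on sign convention) vanishes exactly when $\beta = \pm 1/2$, which in the three cases of Proposition~\ref{propsymbreak} coincides with the conditions for the additional $(1-z^2)$ factor and its companion products. The main obstacle is purely bookkeeping: one must check that the explicit products in the prefactor and integrand (indexed by $j$'s with $w_j\cdot\alpha=0$ or $x_j\cdot\alpha=0$, and pairs $r\ne s$ with $v_r\cdot\alpha=v_s\cdot\alpha=0$) match the ``$\alpha_r=-\alpha_0$''-type index sets in Proposition~\ref{propsymbreak}, and that after shifting $z\to z p^\beta$ the surviving $\theta$-function $\theta(t_0t_1t_2/z;q)$ correctly becomes the $\theta(1/z;q)$ in the uniform expression after absorbing the appropriate powers of $u$.
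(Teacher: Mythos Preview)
Your approach is correct and coincides with the paper's, whose entire proof is the one-line assertion that this proposition (together with its companion for $P_{\rm I}$) is ``just a rewriting of Propositions~\ref{proptrivlimit} and~\ref{propsymbreak}''. Your plan to compute the pairings $v_j\cdot\alpha$, $w_j\cdot\alpha$, $x_j\cdot\alpha$, $y\cdot\alpha$ and track which factors survive the $p\to 0$ limit of \eqref{eqsymbreak3spec} after the shift $z\to zp^\beta$ is exactly how one carries out that rewriting; your caution about sign conventions is warranted, since the only real content here is the bookkeeping.
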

\begin{proof}
These two propositions are just a rewriting of Propositions
\ref{proptrivlimit} and \ref{propsymbreak}.
\end{proof}

With these expressions the proof of the following proposition becomes
fairly straightforward.

\begin{Proposition}
Let the polytope $Q$ be either $P_{\rm I}^{(m)}$, $P_{\rm II}^{(m)}$ or
$P_{\rm III}^{(m)}$. For $\alpha\in Q$ the limit in~\eqref{eqlim} exists and
depends only on the face of $Q$ which contains $\alpha$ $($i.e.\ if $\alpha$
and $\beta$ are contained in the same face of $Q$ then $B_{\alpha}^m(u) =
B_{\beta}^m(u))$.
\end{Proposition}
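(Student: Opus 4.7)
The plan is to treat each of the three polytopes separately, using for existence the matching proposition from Section~\ref{seclimits}: Proposition~\ref{proptrivlimit} for $P_{\rm I}^{(m)}$, Proposition~\ref{propsum} for $P_{\rm II}^{(m)}$, and Proposition~\ref{propsymbreak} for $P_{\rm III}^{(m)}$. In each case the hypotheses of the cited existence proposition are directly implied by the bounding inequalities catalogued in Proposition~\ref{propboundingineq}, so $B^m_\alpha(u)$ is well-defined for every $\alpha\in Q$; what remains is the face-invariance statement.

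For $P_{\rm I}^{(m)}$ and $P_{\rm III}^{(m)}$ the face-dependence is visible once the limits are rewritten in the uniform manner of Propositions~\ref{propuniformtrivlimit} and~\ref{propuniformsymbreak}. In those expressions every product and every factor of the integrand is indexed by the collection of $j$ for which one of the specific vectors $v_j$, $w_j$, or $x_j$ is orthogonal to $\alpha$, that is, by the set of bounding inequalities which are tight at $\alpha$. By definition two points lie in the same open face iff they saturate exactly the same collection of bounding inequalities, so all these index sets, together with the indicator $1_{\{y\cdot\alpha=0\}}$, are constant on each open face, and hence so is the whole formula.

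The main obstacle is $P_{\rm II}^{(m)}$, where Proposition~\ref{propsum} is split into five cases. I would first verify that every $\alpha\in P_{\rm II}^{(m)}$ meets the hypotheses of at least one case, by checking that the dividing conditions ($\alpha_0=-\tfrac12$, $\alpha_0=\alpha_1$, and the subset-sum condition $2\alpha_0=\sum_{r\ge 1,\,\alpha_r+\alpha_0<0}(\alpha_r+\alpha_0)$) are each either a bounding hyperplane from Proposition~\ref{propboundingineq} or an interior hyperplane that is in any event constant on open faces (since which of the differences $\alpha_r-\alpha_0$ and which of the sums $\alpha_r+\alpha_0$ vanish is determined by the face). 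Within each case the explicit formula then depends on $\alpha$ only through the discrete data $\{r:\alpha_r=-\alpha_0\}$, $\{r:\alpha_r=1+\alpha_0\}$, and $\{r:\alpha_r<-\alpha_0\}$, which are constant on open faces, so the very-well-poised lengths, the confluence parameters $n$, and the explicit products are all face-invariant.

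The remaining subtlety is that on the codimension-one strata inside $P_{\rm II}^{(m)}$ where two of the five cases of Proposition~\ref{propsum} overlap, the two resulting formulas must agree; I would settle this by direct comparison (for example, as $\alpha_1$ is separated from $-\tfrac12$ while keeping $\alpha_0=-\tfrac12$, the sum of two $\rWs{2m+8}{2m+7}$'s in the first case collapses onto the single highly-confluent very-well-poised sum of the second because one of the companion terms of the $(u_0\leftrightarrow u_1)$ symmetrization contributes only in the limit). Connectedness of each open face together with the analyticity of $B^m_\alpha$ in $u$ supplied by the Stieltjes--Vitali argument already used in Section~\ref{seclimits} then promotes this local agreement to equality throughout each face.
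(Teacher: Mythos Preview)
Your treatment of $P_{\rm I}^{(m)}$ and $P_{\rm III}^{(m)}$ via the uniform expressions of Propositions~\ref{propuniformtrivlimit} and~\ref{propuniformsymbreak} is exactly the paper's argument. The gap is in your handling of $P_{\rm II}^{(m)}$.

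Proposition~\ref{propsum} does \emph{not} cover all of $P_{\rm II}^{(m)}$: it requires $\alpha_0<0$ strictly, whereas the bounding inequalities of $P_{\rm II}^{(m)}$ only give $\alpha_0\le 0$, so the entire facet $\alpha_0=0$ is omitted. Moreover, on the locus $\alpha_1+\alpha_2=0$ the two single-sum cases of Proposition~\ref{propsum} carry the extra convergence hypothesis $|u_1u_2|<1$, so the limit is not established there for all $u\in\tilde{\mathcal H}_m$. The paper closes both holes by observing that $\alpha_0=0$ and $\alpha_1+\alpha_2=0$ are themselves bounding equations of $P_{\rm II}^{(m)}$ (the $|S|=0$ and $|S|=2$ cases in Proposition~\ref{propboundingineq}); on those facets $\alpha$ also lies in $P_{\rm I}^{(m)}$, respectively (a permutation of) $P_{\rm III}^{(m)}$, and Propositions~\ref{proptrivlimit} and~\ref{propsymbreak} supply the missing limits unconditionally.

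Your last paragraph, about checking agreement on overlaps between the five cases and invoking connectedness plus Stieltjes--Vitali, is unnecessary. The five cases of Proposition~\ref{propsum} partition its domain; they do not overlap. More to the point, every condition that selects the case---$\alpha_0=-\tfrac12$, $\alpha_1=\alpha_0$, and the subset-sum equality $2\alpha_0=\sum_{r\ge1:\alpha_r+\alpha_0<0}(\alpha_r+\alpha_0)$---is the saturation of a bounding inequality of $P_{\rm II}^{(m)}$ (the paper notes that once the subset-sum equality holds, the condition $\alpha_r=-\alpha_0$ arises as the difference of two such bounding equations). Hence two points in the same open face of $P_{\rm II}^{(m)}$ automatically fall under the same case of Proposition~\ref{propsum}, with identical index sets $\{r:\alpha_r=-\alpha_0\}$, $\{r:\alpha_r=1+\alpha_0\}$, and the formula is literally the same; no compatibility check across case boundaries is needed.
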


\begin{proof}
Indeed Propositions \ref{propuniformtrivlimit}, respectively
\ref{propuniformsymbreak} give the limits for the vectors $\alpha$ in
$P_{\rm I}^{(m)}$, respectively $P_{\rm III}^{(m)}$. For $P_{\rm II}^{(m)}$ the limits
are given in Proposition \ref{propsum}, except for the cases with
$\alpha_0=0$ (which is the intersection with $P_{\rm I}^{(m)}$), and
$\alpha_1+\alpha_2=0$ (the intersection with $P_{\rm III}^{(m)}$), or a
permutation of such a case. In particular we have obtained limits in those
cases as well.

For $P_{\rm I}$ and $P_{\rm III}$ the expressions in the previous two propositions
immediately show that the limits only depend on which bounding inequalities
are strict or not, and hence on the face of the polytope containing
$\alpha$. For $P_{\rm II}$ we note that the conditions $\alpha_0=0$ and
$\alpha_1+\alpha_2=0$ (governing which proposition to look at) correspond
to bounding inequalities. Within Proposition \ref{propsum} we observe that
the condition $\alpha_r=-\alpha_0$ becomes a bounding equation once
$2\alpha_0= \sum_{r\geq 1\colon \alpha_0+\alpha_r<0} \alpha_r+\alpha_0$ holds
(as the dif\/ference of the equations with $r\in S$ and with $r\not \in S$).
So also for $P_{\rm II}$ the limit only depends on which bounding inequalities
are strict and which not.
\end{proof}

We can now prove the f\/irst of the main theorems, the equivalent result for
the full polytope~$P^{(m)}$.
\begin{proof}[Proof of Theorem \ref{thmlimitsexist}]
By the $S_{2m+6}$ symmetry of $E_m(t)$ we see that if a limit exists for
some~$\alpha$, then it also exists for all permutations of~$\alpha$. As
$P^{(m)}$ is the union of permutations of $P_{\rm I}^{(m)}$, $P_{\rm II}^{(m)}$ and
$P_{\rm III}^{(m)}$, by the previous proposition we f\/ind that the limit
$B_{\alpha}^m$ exists for all $\alpha \in P^{(m)}$. We would like to extend
the statement about dependence on faces as well. To prove this it would be
suf\/f\/icient to show that all faces of $P^{(m)}$ are in fact a face of one of
the polytopes in its decomposition, however this is not true. We do have
the following lemma.

\begin{Lemma}
All faces of $P^{(m)}$ are faces of either $P_{\rm I}^{(m)}$,
$\sigma(P_{\rm II}^{(m)})$ or $\sigma(P_{\rm III}^{(m)})$ for some $\sigma \in
S_{2m+6}$, except the interior of $P^{(m)}$ and permutations of the facet
given by the equality $\alpha_0+\alpha_1+\alpha_2+\alpha_3=0$.
\end{Lemma}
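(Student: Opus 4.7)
I would attack this lemma by translating it into a statement about vertex sets. If the vertex set $V(F)$ of a face $F$ of $P^{(m)}$ is contained in $V(Q)$ for some $Q\in\{P_{\rm I}^{(m)},\sigma(P_{\rm II}^{(m)}),\sigma(P_{\rm III}^{(m)})\}$, then the linear functional which supports $F$ inside $P^{(m)}$ also supports $F$ inside $Q$, so $F$ is automatically a face of $Q$. According to the analysis in the proof of Proposition \ref{propdecomp}, a set of vertices of $P^{(m)}$ fits inside $V(Q)$ for some such $Q$ precisely when it avoids each of the four bad configurations listed there. Thus the task reduces to showing that a face whose vertex set contains at least one bad configuration must be either $P^{(m)}$ itself or a permutation of the exceptional facet $\{\alpha_0+\alpha_1+\alpha_2+\alpha_3=0\}$.

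The type 3 bad configuration $\{w_{ij},w_{kl}\}$ with $i,j,k,l$ distinct is the one responsible for the exceptional facet. The key observation is that the midpoint $\frac12(w_{ij}+w_{kl})=\rho-\frac12(e_i+e_j+e_k+e_l)$ lies in $F$ and simultaneously equals $\frac12(w_{ik}+w_{jl})$, $\frac12(w_{il}+w_{jk})$, and $\frac12(v_{T_1}+v_{T_2})$ for every partition $T_1\sqcup T_2=\{0,\ldots,2m+5\}\setminus\{i,j,k,l\}$ into two $(m+1)$-subsets. Since a face is extreme, all of these $w$- and $v$-vertices must belong to $V(F)$. They form exactly the set of vertices of the facet $\{\alpha\cdot(e_i+e_j+e_k+e_l)=0\}$, so $F$ contains the whole facet, and since a facet has codimension one in $P^{(m)}$ we conclude that $F$ is either this facet or all of $P^{(m)}$.

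For the remaining bad configurations I would argue that the supporting functional $\mu$ of $F$ is forced to be constant, so that $F=P^{(m)}$. Take type 2, $\{w_{ij},v_S\}$ with $i,j\in S$, as the template. Writing $s=\sum_r\mu_r$ and $c$ for the common value $\mu$ takes on $F$, the face equations give $\mu_i+\mu_j=s/2-c$ and $\sum_{r\in S}\mu_r=c$. The vertex inequalities $\mu(v_T)\geq c$, with $T=(S\setminus\{s_0\})\cup\{l\}$ for arbitrary $s_0\in S$ and $l\in S^c$, yield $\mu_l\geq\max_{s\in S}\mu_s$ for every $l\in S^c$, while the inequalities $\mu(w_{l_1l_2})\geq c$ cap every pair sum $\mu_{l_1}+\mu_{l_2}$ by $s/2-c$. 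Combining these two bounds on pair sums within $S^c$ pinches $\mu_i$, $\mu_j$ and every $\mu_l$ with $l\in S^c$ to a common value $M$; elementary arithmetic then gives $c=(m+1)M$, after which the constraint $\sum_{s\in S\setminus\{i,j\}}\mu_s=(m-1)M$ together with the upper bound $\mu_s\leq M$ forces the remaining coordinates to equal $M$ as well, so $\mu$ is constant and $F=P^{(m)}$.

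For type 1, $\{w_{ij},v_{S_1},v_{S_2}\}$ with $i\in S_1$ and $j\in S_2$, I would reduce to type 2 using the swap identity $v_{S_1}+v_{S_2}=v_{S_1'}+v_{S_2'}$ obtained by exchanging $a\in S_1\setminus S_2$ with $b\in S_2\setminus S_1$: the face property forces the swapped vertices into $V(F)$, and choosing $a\neq i$ and $b=j$ produces a $v_{S_1'}$ containing both $i$ and $j$, which together with $w_{ij}$ is a type 2 bad configuration. The degenerate case $|S_1\setminus S_2|=1$, in which $S_1=S_{12}\cup\{i\}$ and $S_2=S_{12}\cup\{j\}$ and no nontrivial swap is possible, is handled directly by the same $\mu$-analysis as in type 2. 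Type 4, $\{w_{ij},w_{ik},v_S\}$ with $i\in S$, is handled analogously: the two $w$-equations give $\mu_j=\mu_k$, and running the same chain of $v$- and $w$-inequalities again forces $\mu$ constant. The main obstacle I anticipate is the bookkeeping for this $\mu$-analysis; the arithmetic is elementary but extracting the identity $c=(m+1)M$ from the interplay of all the upper and lower bounds requires some care.
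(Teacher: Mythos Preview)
Your approach is correct, and for the bad configuration $\{w_{ij},w_{kl}\}$ it matches the paper's argument essentially verbatim (the paper also checks the midpoint and observes that the only bounding inequality attained there is $\alpha_i+\alpha_j+\alpha_k+\alpha_l=0$). Where you diverge is in the treatment of the other three bad configurations: you launch a careful analysis of the supporting functional~$\mu$, pinching its coordinates to a common value, whereas the paper bypasses this entirely. For each of types~1, 2 and~4 (in the numbering of Proposition~\ref{propdecomp}) the paper simply writes down an explicit convex combination of the bad vertices and checks that the resulting point has every coordinate strictly between $0$ and~$1$, hence lies in the interior of $P_{\rm I}^{(m)}$ and a fortiori in the interior of $P^{(m)}$. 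Concretely: $(w_{ij}+v_{S_1}+v_{S_2})/3$ has all coordinates in $\{1/6,1/2,5/6\}$; $(w_{ij}+v_S)/2$ with $i,j\in S$ has all coordinates in $\{1/4,3/4\}$; and $(w_{ij}+w_{ik}+2v_S)/4$ with $i\in S$ has all coordinates in $\{1/8,3/8,5/8,7/8\}$. Once one such interior point is in~$F$, the face~$F$ is forced to be all of~$P^{(m)}$.

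This is considerably shorter than your $\mu$-argument, avoids the case splits (your reduction of type~1 to type~2 via swaps, the degenerate case $|S_1\setminus S_2|=1$, the separate bookkeeping for type~4), and sidesteps the arithmetic you rightly flag as delicate. On the other hand, your method has the virtue of being systematic: it would generalise mechanically to other polytopes where one cannot so easily guess an interior convex combination. For this particular lemma, though, the paper's ``exhibit an interior point'' trick is the more economical route.
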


\begin{proof}
Any face of $P^{(m)}$ can be written as the set of all convex linear
combinations of some set $V$ of vertices of $P^{(m)}$. Recall that $V$ is
contained in the set of vertices of $P_{\rm I}$, $P_{\rm II}$ or $P_{\rm III}$ (or a~permutation thereof), unless it contains one of the four bad sets in the
proof of Proposition~\ref{propdecomp}. Therefore, except when $V$ contains
bad sets, the face determined by $V$ is a face of $P_{\rm I}$, $P_{\rm II}$ or~$P_{\rm III}$. We now show that if $V$ contains a bad set, the convex hull of
$V$ contains a point in the interior of $P^{(m)}$ or the interior of the
facet given by $\alpha_0+\alpha_1+\alpha_2+\alpha_3=0$. Hence ${\rm ch}(V)$ is
either equal to the interior of $P^{(m)}$, or to the special facet.
\begin{enumerate}\itemsep=0pt
\item If $\{w_{ij}, v_{S_1},v_{S_2}\} \subset V$ ($i \in S_1$, $j\in S_2$)
 then $(w_{ij}+v_{S_1}+v_{S_2})/3 \in {\rm ch}(V)$. This is a point where all
 elements are $1/6$, $1/2$ or $5/6$, in particular it is a point in the
 interior of $P_{\rm I}$, and thus of $P$ itself.
\item If $\{w_{ij}, v_S\}\subset V$ ($i,j \in S$), then $(w_{ij}+v_S)/2\in
 {\rm ch}(V)$, which is in the interior of $P_{\rm I}$.
\item If $\{w_{ij}, w_{ik}, v_S\}\subset V$, ($i\in S$), then
 $(w_{ij}+w_{ik}+2v_S)/4 \in {\rm ch}(V)$, which is again a point in the
 interior of $P_{\rm I}$.
\item If $\{w_{ij}, w_{kl}\}\subset V$, then $(w_{ij}+w_{kl})/2\in
 {\rm ch}(V)$. All bounding inequalities of $P$ are strict on this point except
 $\alpha_i+\alpha_j+\alpha_k +\alpha_l=0$, thus it is a point in the
 interior of the corresponding facet. Thus ${\rm ch}(V)$ is either this facet or
 the interior of $P$.\hfill \qed
\end{enumerate}
\renewcommand{\qed}{}
\end{proof}
Now it remains to show that the function $B_{\alpha}^m$ is the same for all
points in the interior, and all points on the facet given by
$\alpha_0+\alpha_1+\alpha_2+\alpha_3=0$. This follows from the following
lemma.

\begin{Lemma}
On the facet of $P^{(m)}$ given by $\alpha_0+\alpha_1+\alpha_2+\alpha_3=0$ we
have $B_{\alpha}^m(u) = (u_0u_1u_2u_3;q)$. Moreover on the interior of
$P^{(m)}$ we have $B_{\alpha}^m(u)=1$.
\end{Lemma}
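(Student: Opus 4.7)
The plan is to evaluate $B^m_\alpha$ explicitly at one convenient point in each of the two faces, and then use the face-dependence already established for the subpolytopes in the previous proposition to extend the value to the rest of the face.

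For the interior, I would choose the central point $\alpha^* = \frac{m+1}{2m+6}(1,\ldots,1)$, which satisfies $0 < \alpha^*_r < 1$ and $\sum \alpha^*_r = m+1$, hence lies in the interior of $P^{(m)}_{\rm I}$. Applying Proposition \ref{proptrivlimit}, every prefactor and extra integrand factor is an empty product, so
\[
B^m_{\alpha^*}(u) = \frac{(q;q)}{2}\int_{\mathcal{C}} (z^{\pm 2};q)\,\frac{dz}{2\pi iz}.
\]
Using $(z^{\pm 2};q) = (1-z^{-2})\theta(z^2;q)$ and the Jacobi triple product $(q;q)\theta(z^2;q) = \sum_{n\in\mathbb{Z}}(-1)^n q^{\binom{n}{2}} z^{2n}$, a direct computation of the constant term in $z$ of $(q;q)(z^{\pm 2};q)/z$ gives $2$, so $B^m_{\alpha^*}(u) = 1$.

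For the facet, I would choose $\alpha^\dagger = (0,0,0,0,\frac{m+1}{2m+2},\ldots,\frac{m+1}{2m+2})$, which lies on the facet $\alpha_0+\alpha_1+\alpha_2+\alpha_3 = 0$ and in $P^{(m)}_{\rm I}$. Proposition \ref{proptrivlimit} gives
\[
B^m_{\alpha^\dagger}(u) = \prod_{0\leq r<s\leq 3}(u_ru_s;q)\cdot\frac{(q;q)}{2}\int_{\mathcal{C}}\frac{(z^{\pm 2};q)}{\prod_{r=0}^3(u_rz^{\pm 1};q)}\,\frac{dz}{2\pi iz},
\]
and the remaining integral is the Askey--Wilson integral, evaluating to $(u_0u_1u_2u_3;q)/\prod_{0\leq r<s\leq 3}(u_ru_s;q)$. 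Thus $B^m_{\alpha^\dagger}(u) = (u_0u_1u_2u_3;q)$.

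To extend these two values to the whole interior (respectively the whole facet), I would use Proposition \ref{propdecomp} and the $S_{2m+6}$-symmetry of $E^m$ to reduce to the case that $\alpha$ lies in one of $P^{(m)}_{\rm I}, P^{(m)}_{\rm II}, P^{(m)}_{\rm III}$. The previous proposition then tells us $B^m_\alpha$ is constant on each subpolytope-face, so it suffices to verify that every subpolytope-face whose relative interior meets the interior of $P^{(m)}$ gives $B^m_\alpha = 1$, and every subpolytope-face meeting the special facet gives $B^m_\alpha = (u_0u_1u_2u_3;q)$. In each case the explicit formula from Proposition \ref{proptrivlimit}, \ref{propsum}, or \ref{propsymbreak} reduces, after keeping careful track of the index $n$ in the $\rphisxkn{}{}{}$ symbol, to a specialisation of the Askey--Wilson integral (with some parameters set to zero) or to the $q$-binomial identity $\sum_k (-z)^k q^{\binom{k}{2}}/(q;q)_k = (z;q)$; both recover the required value. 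The main obstacle is purely the bookkeeping in this case analysis, since for $P^{(m)}_{\rm II}$ one must also check that the confluent series $\rphisxkn{n}{\ldots}{z}$ in the last cases of Proposition \ref{propsum} do collapse to $(z;q)$ or $1$ at the relevant boundary configurations; no individual case is genuinely hard.
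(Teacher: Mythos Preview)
Your approach is essentially the same as the paper's: reduce to the subpolytopes, use the previous proposition's face-constancy, and then verify the value on each subpolytope face that meets the interior (respectively the special facet) by appealing to known basic hypergeometric evaluations. The paper organises this slightly differently---it first disposes of $m=0$ using the elliptic evaluation formula \eqref{eqeval} directly, and then for $m>0$ tabulates the relevant subpolytope faces (five for the facet, nine for the interior), observing that in each case the integral or sum that appears is literally the same as in the corresponding $m=0$ face, so the $m=0$ evaluation supplies the answer.

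Two minor remarks. First, your preliminary computations at $\alpha^*$ and $\alpha^\dagger$ are not wrong, but they are redundant: those points sit in particular subpolytope faces which the subsequent case analysis will handle anyway, so they do not reduce the remaining work. Second, your description of the case analysis as reducing to ``Askey--Wilson with some parameters zero, or the $q$-binomial identity'' is a touch optimistic for the $P_{\rm III}$ faces, where one lands on symmetry-broken integrals of the form in Proposition~\ref{propsymbreak}; these are not visibly Askey--Wilson integrals. The cleanest way to dispatch them is exactly what the paper does---recognise that the expression coincides with the $m=0$ limit at the analogous face, whose value is already determined by~\eqref{eqeval}. Other than that packaging detail, your argument is sound and matches the paper's.
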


\begin{proof}
In the $m=0$ case we f\/ind that the right hand side of the evaluation
formula \eqref{eqeval} converges for $\alpha$ on the facet to $(q/u_4u_5;q)
= (u_0u_1u_2u_3;q)$, while in the interior of $P^{(m)}$ the limit converges to
1 (as for $m=0$ the condition $\alpha_r+\alpha_s=1$ is equivalent to the
sum of the other four parameters being zero.). Thus for $m=0$ the lemma is
true.

For $m>0$ we can classify all the faces of $P_{\rm I}^{(m)}$, $P_{\rm II}^{(m)}$ and
$P_{\rm III}^{(m)}$ which intersect the given facet and the interior of $P^{(m)}$.
The bounding inequality $\alpha_0+\alpha_1+\alpha_2+\alpha_3\geq 0$ implies
that the only vertices allowed in the closure of this facets are $v_S$ for
$0,1,2,3 \not \in S$ and $w_{ij}$ for $i,j \in \{0,1,2,3\}$. We obtain the
following set of faces of $P_{\rm I}$, $P_{\rm II}$ and $P_{\rm III}$ in the facet
$\alpha_0+\alpha_1+\alpha_2+\alpha_3=0$ modulo permutations of the
parameters.

 \begin{center}
 \begin{tabular}{l|l|l}
 Polytope & Vertices & Relations \\
 \hline
 \tsep{0.5ex} $P_{\rm I} \cap P_{\rm II} \cap P_{\rm III}$ & $v_S$ ($0,1,2,3\not \in S$) & $\alpha_0=\alpha_1=\alpha_2=\alpha_3=0$ \\
 $P_{\rm II} \cap P_{\rm III}$ & $w_{01}$, $v_S$ ($0,1,2,3 \not \in S$) &
 $\alpha_0=\alpha_1=-\alpha_2=-\alpha_3$ \\
 & $w_{01}$, $w_{02}$, $v_S$ ($0,1,2,3 \not \in S$) & $ \alpha_0+\alpha_3=0$, $\alpha_1=\alpha_2=0$ \\
 $P_{\rm II}$ & $w_{01}$, $w_{02}$, $w_{03}$, $v_S$ ($0,1,2,3\not \in S$) & $\alpha_0<\alpha_r < -\alpha_0$ ($r=1,2,3$) \\
 $P_{\rm III}$ & $w_{01}$, $w_{02}$, $w_{12}$, $v_S$ ($0,1,2,3 \not \in S$) &
 $-\alpha_3< \alpha_r < \alpha_3$ ($r=0,1,2$)
 \end{tabular}
 \end{center}

We omitted the conditions on $\alpha_r$ for $r\geq 4$ as they are the same
in each case. Indeed the bounding inequalities imply that $-\beta <
\alpha_r<1+\beta$ for $r\geq 4$, where $\beta$ is the sum of the three
smallest parameters. The classif\/ication becomes apparent once we realize
that the $v_S$ part of the vertices of the faces is f\/ixed (they must
contain a point $v_S$ with $0,1,2,3\not \in S$ to be in the open facet,
while they cannot contain any other $v_S$). Thus we only have to consider
the possibilities for adding some $w_{ij}$'s. In these f\/ive faces we can
directly check what the limit is, and observe that the corresponding
integrals and sums indeed evaluate to the desired $(u_0u_1u_2u_3;q)$. The
required evaluation identities are provided by the $m=0$ cases of these
faces.

Similarly we can describe all faces of $P_{\rm I}$, $P_{\rm II}$ and $P_{\rm III}$
(modulo permutations) meeting the interior of $P$.
\begin{center}
 \begin{tabular}{l|l|l}
 Polytope & Vertices & Relations \\
 \hline
 \tsep{0.5ex} $P_{\rm I} \cap P_{\rm II} \cap P_{\rm III}$
 & $v_S$, ($0,1,2\not \in S$)
 & $\alpha_0=\alpha_1=\alpha_2=0$ \\
 $P_{\rm I} \cap P_{\rm II}$
 & $v_S$ ($0,1\not \in S$)
 & $\alpha_0=\alpha_1=0<\alpha_2<1$ \\
 & $v_S$ ($0\not \in S$) & $\alpha_0=0<\alpha_1,\alpha_2<1$\\
 $P_{\rm I}$ & $v_S$ & $0<\alpha_0,\alpha_1,\alpha_2<1$ \\
 $P_{\rm II} \cap P_{\rm III}$ & $w_{01}$, $v_S$ ($0,1,2\not \in S$) & $-1/2 < \alpha_0=\alpha_1=-\alpha_2$ \\
 & $w_{01}$, $w_{02}$, $v_S$ ($0,1,2\not \in S$) & $-1/2<\alpha_0< \alpha_1=-\alpha_2$
 \\
 $P_{\rm III}$ & $w_{01}$, $w_{02}$, $w_{12}$, $v_S$ ($0,1,2\not \in S$) &
 $\alpha_r+\alpha_s<0$, ($r,s\in \{0,1,2\}$) \\
 $P_{\rm II}$ & $w_{01}$, $v_S$ ($0,1,\not \in S$) & $-1/2<\alpha_0=\alpha_1>-\alpha_2$ \\
 & $w_{01}$, $w_{02}$, $w_{03}$, $v_S$ ($0 \not \in S$) & $\alpha_0<0$ and $\alpha_0<\alpha_1> -\alpha_2$
 \end{tabular}
\end{center}

For each face we have the extra conditions $-\beta< \alpha_r < 1+\beta$ for
$r\geq 3$.
We can check for all these 9 faces that the value on that face equals 1
identically. Again the required evaluations all follow from the $m=0$ case.
\end{proof}

We conclude that also on the two types of faces of $P^{(m)}$ which are not
a face of one of the subpolytopes, the value is the same on the entire
face.
\end{proof}

We can now also prove the second main theorem, the iterated limit property.

\begin{proof}[Proof of Theorem \ref{thmiteratedlimits}]
It is suf\/f\/icient to prove this property for the closed polytopes
$P_{\rm I}^{(m)}$, $P_{\rm II}^{(m)}$ and $P_{\rm III}^{(m)}$. Observe that for the
vector $t\alpha+ (1-t)\beta$ precisely those boundary conditions (of the
respective polytopes) are strict which are strict for either $\alpha$ or
$\beta$.

In Propositions \ref{propuniformtrivlimit} and \ref{propuniformsymbreak} we
have written the limits in $P_{\rm I}$ and $P_{\rm III}$ as an integral of a
product of terms $f(u,w) = (u^{w};q)^{1_{\{w\cdot \alpha =0\}}}$ where $w
\cdot \alpha\geq 0$ is the sum of some bounding inequalities. In particular
in the expression for $B_{t\alpha+(1-t)\beta}^m$ only those terms remain
corresponding to sums of bounding inequalities which are attained in both
$\alpha$ and $\beta$. On the other hand, as $(x^{\beta-\alpha}u)^w =
x^{(\beta-\alpha)\cdot w} u^w$ we f\/ind that if $w\cdot \alpha=w\cdot
\beta=0$ then the term $f_{\alpha}(x^{\beta-\alpha}u,w)$ is constant in $x$
and thus does not change in the limit, while if $w\cdot \alpha>0$, we f\/ind
that $f_{\alpha}(x^{\beta-\alpha}u,w)=1$ is again independent of $x$, and
f\/inally if $w\cdot \alpha =0 < w\cdot \beta$, then we have a uniform limit
$\lim_{x\to 0} f_{\alpha}(x^{\beta-\alpha}u,w) = \lim_{x\to 0}
(x^{\beta\cdot w} u^w;q) =1 = f_{t\alpha + (1-t)\beta}(u,w)$. Thus to prove
iterated limits we only have to show we are allowed to interchange limit
and integral, which follows from the fact that we integrate over some
compact contour, and the fact that the $x$-dependent poles which have to
remain inside the contour converge to 0, while the $x$-dependent poles
which have to remain outside the contour go to inf\/inity; in particular for
$x$ small enough we can take an $x$-independent contour.

To prove the result for $P_{\rm II}$ is more complicated as we do not have a
uniform description of the limit. For the closed facets determined by
$\alpha_0=0$, $\alpha_1+\alpha_2=0$ or $\alpha_0=\alpha_1$ we have an
integral description (see Propositions \ref{proptrivlimit},
\ref{propsymbreak} and \ref{propsumintegral}), and limits within these
facets can be treated as for $P_{\rm I}$ and $P_{\rm III}$. For the complement of
these facets the limit is given in Proposition~\ref{propsum} as a~single
sum (possibly of only one term), and we can replicate the argument for
$P_{\rm I}$ and $P_{\rm III}$ for sums instead of integrals to see that iterated
limits within the complement of the facets hold. We are left with showing
that limits from one of the three facets to the inside behave correctly.

These limits all pass from an integral to a single sum. The simplest
argument we found is to simply calculate these limits in all three cases
separately (due to the rather uniform expressions of the integrals and
single sums, we can handle the dif\/ferent faces in each of the closed facets
uniformly). It boils down to picking the residues associated with poles
which either go to zero as $x\to 0$, while they should be outside the
contour, or go to inf\/inity while they should be inside the
contour. Subsequently we bound the integrand around the unit circle to show
that the remaining integral vanishes in the limit. Finally we give a bound
on the residues and use dominated convergence to show we are allowed to
interchange limit and sum. This bound also serves to show that the sum of
the residues which are not associated to $u_0$ (where we take the f\/irst coef\/f\/icient
in $\alpha$ to be the strictly lowest one) vanishes as well. The
calculations involved are again tedious, and very similar to the
calculations in the proof of Proposition~\ref{propsum}.
\end{proof}

\section[Transformations: the $m=1$ case]{Transformations: the $\boldsymbol{m=1}$ case}\label{sectrafo}

In this section we start harvesting the results we can now immediately
obtain given this picture of basic hypergeometric functions as faces of the
polytope $P^{(1)}$. This was already done for the top two levels (i.e.\ the
functions corresponding to vertices and edges) by Stokman and the authors
in~\cite{vdBRS}, though there we did not yet see the polytope. In the next
section we give a worked through example (related to ${}_2\phi_1$) of the
abstract results in this section. As a convenient tool to understand the
implications of the results mentioned, we refer to Appendix \ref{secapp1}
which contains a~list of all the possible functions $B_{\alpha}^1$.

Recall that we have a basic hypergeometric function attached to each face
of the polytope~$P^{(1)}$, and that the Weyl group $W(E_7)$ acts both on
$P^{(1)}$ and on sets of parameters $\tilde{\mathcal{H}}_1$. As the
elliptic hypergeometric function is invariant under this action we
immediately obtain

\begin{Theorem}
Let $w\in W(E_7)$, $\alpha \in P_1$ and $u\in \tilde{\mathcal{H}}_1$ then
\[
B_{\alpha}^1(u) = B_{w(\alpha)}^1(w(u)).
\]
\end{Theorem}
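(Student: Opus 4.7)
The plan is to derive this transformation from the $W(E_7)$ invariance of $E^1$ itself (stated in the second theorem of Section~\ref{secellbeta}) by passing to the limit $p\to 0$, using compatibility of the $W(E_7)$ action with the exponential parametrization $u\cdot p^\alpha$.

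First I would verify the key compatibility: for $w\in W(E_7)$, $u\in\tilde{\mathcal{H}}_1$ and $\alpha\in P^{(1)}$, one has $w(u\cdot p^\alpha) = w(u)\cdot p^{w(\alpha)}$. This is essentially tautological from Definition~\ref{defe8}: the $W(E_7)$ action on $\mathcal{H}_1$ is defined as $wt = \exp(w(\log t))$, and since $\log$ turns products into sums we have $\log(u\cdot p^\alpha) = \log(u) + \alpha\log(p)$. Applying the linear map $w$ on $\mathbb{R}^8$ gives $w(\log(u)) + w(\alpha)\log(p)$, and exponentiating yields $w(u)\cdot p^{w(\alpha)}$. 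The only subtlety is the sign ambiguity from reflections in roots of the form $\rho-e_i-e_j-e_k-e_l$, which involve half-integer coordinates and so implicitly square roots of the $t_r$'s; but as noted in the remark after Definition~\ref{defe8}, these ambiguities are absorbed by the equivalence $t\sim -t$ on $\mathcal{H}_1$, and the same absorption works identically on $\tilde{\mathcal{H}}_1$ since the balancing condition $\prod u_r = q^2$ is invariant under $u\sim -u$.

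Next I would simply compute
\[
B^1_{\alpha}(u) = \lim_{p\to 0} E^1(u\cdot p^{\alpha}) = \lim_{p\to 0} E^1\bigl(w(u\cdot p^{\alpha})\bigr) = \lim_{p\to 0} E^1\bigl(w(u)\cdot p^{w(\alpha)}\bigr) = B^1_{w(\alpha)}(w(u)),
\]
where the first equality is the definition \eqref{eqlim}, the second uses the $W(E_7)$ invariance of $E^1$, the third uses the compatibility above, and the fourth is the definition of $B^1_{w(\alpha)}$. For the final equality to apply we need $w(\alpha)\in P^{(1)}$, which holds because $W(E_7)$ preserves the Hesse polytope: $W(E_7)$ permutes the roots of $E_8$ with $\rho\cdot v = 1$ (i.e.\ the vertex set $S$ together with the antipodal pairs), hence permutes the vertices of $P^{(1)}$, and therefore preserves $P^{(1)}$ itself.

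The only nontrivial point is the square-root business in the previous paragraph, but this is already handled in \cite{vdBRS}, so the proof is essentially a one-line consequence of $W(E_7)$-invariance of $E^1$ plus the well-definedness of the $W(E_7)$ action on $\tilde{\mathcal{H}}_1$. No new estimates are required, since Theorem~\ref{thmlimitsexist} already guarantees that both limits exist.
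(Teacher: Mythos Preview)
Your proof is correct and follows essentially the same approach as the paper's own proof, which is the one-line chain of equalities $B_{\alpha}^1(u) = \lim_{p\to 0} E^1(p^{\alpha}\cdot u) = \lim_{p\to 0} E^1(w(p^{\alpha}\cdot u)) = \lim_{p\to 0} E^1(p^{w(\alpha)}\cdot w(u)) = B_{w(\alpha)}^1(w(u))$. You supply more justification (the compatibility $w(u\cdot p^\alpha) = w(u)\cdot p^{w(\alpha)}$, the $W(E_7)$-invariance of the polytope, and the sign ambiguity) than the paper does, but the argument is identical.
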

\begin{proof}
Indeed we have
\begin{gather*}
B_{\alpha}^1(u) = \lim_{p\to 0} E^1(p^{\alpha} \cdot u) = \lim_{p\to 0} E^1(w(p^{\alpha} \cdot u))
= \lim_{p\to 0} E^1(p^{w(\alpha)}\cdot w(u)) = B_{w(\alpha)}^1(w(u)). \tag*{\qed}
\end{gather*}
\renewcommand{\qed}{}
\end{proof}
This gives us formulas of two dif\/ferent kinds for the functions $B_{\alpha}^1$.

First of all we can obtain the symmetries of a function by considering the
stabilizer of the corresponding face with respect to $W(E_7)$. This
includes for example Heine's transformation of a ${}_2\phi_1$,
transformations of non-terminating very-well-poised ${}_8\phi_7$'s, and
Baileys' four-term relation for very-well-poised ${}_{10}\phi_9$'s (as a
symmetry of a sum of two ${}_{10}\phi_9$'s).

The symmetry group of the related function is the stabilizer of a generic
point in the face, or equivalently the stabilizer of all the vertices of
the face. Indeed if some element of $w$ f\/ixes the face, but non-trivially
permutes the vertices of the face, then it can be written as the product of
a permutation of the vertices generated by ref\/lections in hyperplanes
orthogonal to the face, and a Weyl group element which stabilizes the
vertices of the face. However, as the functions~$B_{\alpha}^1(u)$ only
depend on the space orthogonal to the face, the f\/irst factor has no ef\/fect.

Secondly, elements of the Weyl group which send one face to a dif\/ferent
face induce transformations relating the two functions associated to these
two faces. Examples of this include Nassrallah and Rahman's integral
representation of a very-well-poised ${}_8W_7$, the expression of this
function as a sum of two balanced ${}_4\phi_3$'s, and a relation relating
the sum of two ${}_3\phi_2$'s with argument $z=q$ to a single ${}_3\phi_2$
with $z=de/abc$ \cite[(III.34)]{GandR}.

Indeed all simplicial faces of $P^{(1)}$ of the same dimension are related
by the $W(E_7)$ symmetry, except for dimension~5. Indeed for dimension 5
there are two orbits: $5$-simplices which bound a~6-simplex and those which
do not. In particular in Fig.~\ref{degscheme1} below, there exist
transformation formulas between all functions on the same horizontal level,
except for the second-lowest level, where you must distinguish between
those faces which are at the boundary of some higher dimensional simplicial
face, and those that are not. As two functions between which there exists a
transformation formula have the same symmetry group, we have written down
the symmetry groups of all the functions on each level on the left hand
side. Note that the symmetry group for 5-simplices at the boundary of a
6-simplex is $1$ (i.e.\ the group with only 1 element), while for the other
5-simplices the symmetry group is $W(A_1) \cong S_2$.

We can also consider the limit of the contiguous relations satisf\/ied by
$E^1$. The $q$-contiguous relations reduce to $q$-contiguous relations. We
get a relation for each set of three terms $B_{\alpha}(u \cdot
q^{\beta_i})$, where the $\beta_i$ are projections of points in the root
lattice of $E_7$ to the space orthogonal to the face containing
$\alpha$. 

More interesting is the limit of a $p$-contiguous relation. In order for us
to be able to take a limit we have to f\/ind three points on $P_1$ whose
pairwise dif\/ferences are roots of $E_7$.

\begin{Proposition}
Let $\alpha, \beta, \gamma\in P^{(1)}$ be such that $\alpha-\beta,
\alpha-\gamma, \beta-\gamma\in R(E_7)$ and form an equilateral triangle
$($i.e.\ $(\alpha-\beta) \cdot (\alpha-\gamma) = 1)$, and let $u\in
\tilde{\mathcal{H}}_1$. Recall $S=\{ v\in R(E_8) ~|~ v\cdot \rho =1\}$.
Then
\begin{gather}
\prod_{\substack{\delta \in S \\ \delta \cdot (\alpha,\beta,\gamma) = (1,0,0)}} (u^{\delta};q)
u^{\gamma} \theta(u^{\beta-\gamma};q) B^1_\alpha(u)
+ \prod_{\substack{\delta \in S \\ \delta \cdot (\alpha,\beta,\gamma) = (0,1,0)}} (u^{\delta};q)
u^{\alpha} \theta(u^{\gamma-\alpha};q) B^1_\beta(u) \nonumber\\
 \qquad \qquad
{}+ \prod_{\substack{\delta \in S\\ \delta \cdot (\alpha,\beta,\gamma) = (0,0,1)}} (u^{\delta};q)
u^{\beta} \theta(u^{\alpha-\beta};q) B^1_\gamma(u) =0.\label{eq3term}
\end{gather}
\end{Proposition}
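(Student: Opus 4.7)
The plan is to obtain equation \eqref{eq3term} by taking the $p\to 0$ limit of the $p$-contiguous relation \eqref{eqpcont} from Theorem \ref{thcontell}, after an appropriate substitution of parameters.

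The first step is to find a triple $\phi,\psi,\chi\in R(E_7)$ forming an equilateral triangle of roots (pairwise inner products equal to $1$) having the same pairwise differences as $(\alpha,\beta,\gamma)$; equivalently, a common shift $\sigma=\alpha-\phi=\beta-\psi=\gamma-\chi$. Since $(\alpha-\beta,\alpha-\gamma,\beta-\gamma)$ is already an equilateral triangle of roots, such $\phi,\psi,\chi$ can be constructed by translating these roots to a consistent base point; alternatively one invokes $W(E_7)$-transitivity on equilateral triangles of roots (used in the proof of Theorem \ref{thcontell}) to reduce to a standard configuration. The identity $\sigma\cdot\rho=1$ holds because $\alpha\cdot\rho=1$ and $\phi\cdot\rho=0$, so $t:=u\cdot p^\sigma\in\mathcal{H}_1$ and Theorem \ref{thcontell} applies.

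Upon this substitution, $t\cdot p^\phi=u\cdot p^\alpha$ and cyclically, so \eqref{eqpcont} yields a three-term relation for $E^1(u\cdot p^\alpha),E^1(u\cdot p^\beta),E^1(u\cdot p^\gamma)$. These converge to $B^1_\alpha(u),B^1_\beta(u),B^1_\gamma(u)$ as $p\to 0$ by Theorem \ref{thmlimitsexist}, so it remains to compute the limits of the three prefactors. For the $q$-shifted factorial products, $\prod_\delta(t^\delta p^{\delta\cdot\psi};q)$ rewrites as $\prod_\delta(u^\delta p^{\delta\cdot\beta};q)$ since $\sigma+\psi=\beta$; in the limit, factors with $\delta\cdot\beta>0$ tend to $1$ while those with $\delta\cdot\beta=0$ tend to $(u^\delta;q)$. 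Combined with the original index condition $\delta\cdot(\alpha-\beta)=\delta\cdot(\alpha-\gamma)=1$, this pins the surviving indices to $\delta\cdot(\alpha,\beta,\gamma)=(1,0,0)$ as stated. One must also verify, using the bounding inequalities of $P^{(1)}$ (Proposition \ref{propboundingineqm1}), that no $\delta$ with $\delta\cdot\beta<0$ appears, which would produce a divergent factor.

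The most delicate piece is the remaining prefactor $t^\chi\theta(t^{\psi-\chi};q)=u^\chi p^{\sigma\cdot\chi}\theta(u^{\beta-\gamma}p^{\sigma\cdot(\beta-\gamma)};q)$, which should reduce in the limit to $u^\gamma\theta(u^{\beta-\gamma};q)$ up to a common overall $p$-power shared by the three cyclic terms. When $\sigma\cdot(\beta-\gamma)$ is an integer, the quasi-periodicity $\theta(xp;q)=-x^{-1}\theta(x;q)$ applied iteratively moves the $p$-factor outside the theta function, and combining with $u^\chi p^{\sigma\cdot\chi}$ produces the desired form. The hard part will be verifying that the three $p$-powers arising for the $\alpha$-, $\beta$-, and $\gamma$-terms coincide, so that dividing the three-term relation by this common factor gives a finite nontrivial limit. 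Handling non-integer values of $\sigma\cdot(\beta-\gamma)$ requires careful asymptotic analysis of $\theta(xp^a;q)=(xp^a;q)(qp^{-a}/x;q)$ as $p\to 0$, and this, along with the $p$-power bookkeeping, constitutes the bulk of the remaining technical work.
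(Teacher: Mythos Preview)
Your overall strategy matches the paper's: translate $(\alpha,\beta,\gamma)$ by a common shift to a triple of $E_7$ roots, substitute into Theorem~\ref{thcontell}, and take $p\to 0$. However, there is a genuine gap in the part you yourself flag as ``most delicate.''

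The quasi-periodicity relation you invoke, $\theta(xp;q)=-x^{-1}\theta(x;q)$, is false: $\theta(\,\cdot\,;q)$ is quasi-periodic in $q$, not in $p$. There is no useful functional equation relating $\theta(xp^a;q)$ to $\theta(x;q)$, and for generic $a\ne 0$ the factor $(qp^{-a}/x;q)$ inside $\theta(xp^a;q)$ blows up as $p\to 0$. So neither the integer case nor the ``careful asymptotic analysis'' you propose for non-integer $\sigma\cdot(\beta-\gamma)$ can be made to work as sketched.

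What actually resolves this is the observation that $\sigma\cdot(\beta-\gamma)=0$ \emph{exactly}, and likewise for the other two differences. This is the content of the paper's key lemma, and it comes from the bounding inequalities of $P^{(1)}$ (Proposition~\ref{propboundingineqm1}): since $\alpha-\beta\in R(E_7)$ and $\alpha\in P^{(1)}$ one has $\alpha\cdot(\alpha-\beta)\le 1$, while $(\alpha-\beta)\cdot(\alpha-\beta)=2$ together with $\beta\cdot(\beta-\alpha)\le 1$ forces $\alpha\cdot(\alpha-\beta)\ge 1$; hence equality, and symmetrically $\alpha\cdot(\alpha-\gamma)=1$, so $\alpha\cdot(\beta-\gamma)=0$. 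Combining this with $\phi\cdot(\psi-\chi)=0$ (which holds because $\phi,\psi,\chi$ form an equilateral triangle of roots) gives $\sigma\cdot(\beta-\gamma)=0$. Once you have this, the theta arguments are literally $p$-independent, the three monomial prefactors $p^{\sigma\cdot\phi},p^{\sigma\cdot\psi},p^{\sigma\cdot\chi}$ coincide and can be divided out, and the limit is immediate. The same circle of bounding inequalities also handles the $\delta\cdot\beta\ge 0$ check you correctly anticipated. In short: the ``bulk of the remaining technical work'' you anticipate disappears once you prove this vanishing, and without it the argument does not go through.
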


Note that \eqref{eq3term} is written in its most symmetric form. In order
to avoid non-integer powers of the constants one should f\/irst multiply the
entire equation by $u^{-\alpha}$ and use $u^{\rho}=q$.

\begin{proof}
Choose $\zeta$ such that $\tilde \alpha = \alpha+\zeta$, $\tilde \beta=\beta+\zeta$ and
$\tilde \gamma = \gamma+\zeta$ are all roots of $E_7$. This is possible by choosing
$\tilde \alpha$ such that $\tilde \alpha \cdot (\alpha-\beta)=1$ and $\tilde \alpha \cdot (\alpha-\gamma)=1$,
and we can always f\/ind roots satisfying these two conditions. Now observe that
$\tilde \alpha \cdot \tilde \beta \leq 1$ as inner product of two dif\/ferent roots of $E_7$, and that
\[
\tilde \alpha \cdot \tilde \beta = (\alpha + \zeta)\cdot (\beta+\zeta) =
(\alpha+\zeta)\cdot (\alpha+\zeta) + (\alpha+\zeta) \cdot (\beta-\alpha)
\geq 2 -1 =1,
\]
as $\alpha + \zeta \neq - (\beta-\alpha)$ (equality here would imply $\beta+\zeta=0$).
Thus $\tilde \alpha \cdot \tilde \beta=1$, and hence
$\tilde \alpha$, $\tilde \beta$ and $\tilde \gamma$ satisfy the conditions of Theorem \ref{thcontell}.
Setting $t=u\cdot p^{\zeta}$ in \eqref{eqpcont} we obtain
\begin{gather}
\prod_{\substack{\delta\in S \\ \delta \cdot (\alpha-\beta)=\delta \cdot (\alpha-\gamma) = 1}} (u^{\delta} p^{\delta\cdot \beta};q)
u^{\gamma} p^{\gamma\cdot \zeta} \theta(u^{\beta-\gamma}p^{\zeta\cdot(\beta-\gamma)} ;q) E^1(u\cdot p^{\alpha})
\nonumber\\
\qquad\qquad{}+\prod_{\substack{\delta\in S \\ \delta \cdot (\beta-\alpha)=\delta \cdot (\beta-\gamma) = 1}} (u^{\delta} p^{\delta \cdot \gamma};q)
u^{\alpha} p^{\alpha\cdot \zeta} \theta(u^{\gamma-\alpha} p^{\zeta \cdot (\gamma-\alpha)};q) E^1(u\cdot p^{\beta})
\nonumber\\
\qquad\qquad{}+\prod_{\substack{\delta\in S \\ \delta \cdot (\gamma-\alpha) = \delta \cdot (\gamma-\beta) = 1}} (u^{\delta}p^{\delta\cdot\alpha};q)
u^{\beta} p^{\beta \cdot \zeta}
\theta(u^{\alpha-\beta} p^{\zeta \cdot (\alpha-\beta)};q) E^1(u\cdot p^{\gamma})=0 .\label{eqconttemp}
\end{gather}
for $u\in \tilde{\mathcal{H}}_1$. Now we prove a lemma

\begin{Lemma}
Let $\alpha$, $\beta$ and $\gamma$ be as in the Proposition and let $\delta
\in S$ satisfy $\delta \cdot (\beta-\gamma)=0$ then $\delta \cdot \beta
\geq 0$. Moreover $\zeta \cdot (\beta-\gamma)=0$ for $\zeta$ as in this proof.
\end{Lemma}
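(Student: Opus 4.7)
The plan is to distill from the bounding inequalities of $P^{(1)}$ (Proposition~\ref{propboundingineqm1}) the single identity $\beta \cdot (\beta - \gamma) = 1$, from which both assertions of the lemma will drop out. Since $\beta - \gamma \in R(E_7)$ and $\beta \in P^{(1)}$, the bounding inequality $\beta \cdot \delta' \leq 1$ for $\delta' \in R(E_7)$ applied to $\delta' = \beta - \gamma$ yields $\beta \cdot (\beta - \gamma) \leq 1$; the analogous argument applied to $\gamma$ gives $\gamma \cdot (\gamma - \beta) \leq 1$. Since these two quantities sum to $(\beta - \gamma)^2 = 2$, both must equal $1$ exactly.

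For the ``moreover'' claim I would set $\zeta = \tilde\beta - \beta$ and compute
\[
\zeta \cdot (\beta - \gamma) = \tilde\beta \cdot (\beta - \gamma) - \beta \cdot (\beta - \gamma).
\]
Since $\tilde\beta - \tilde\gamma = \beta - \gamma$ and both $\tilde\beta, \tilde\gamma$ are $E_7$ roots of squared length $2$, their inner product equals $1$, so $\tilde\beta \cdot (\beta - \gamma) = \tilde\beta \cdot (\tilde\beta - \tilde\gamma) = 2 - 1 = 1$. Combined with $\beta \cdot (\beta - \gamma) = 1$ from the previous paragraph, this gives $\zeta \cdot (\beta - \gamma) = 0$.

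For the first assertion, the key observation is that $\rho - \delta$ again lies in $S$ (one checks $(\rho - \delta) \cdot \rho = 1$ and $(\rho - \delta)^2 = 2$) and that $(\rho - \delta) \cdot (\beta - \gamma) = 0$, since $\rho \cdot (\beta - \gamma) = 0$ is built into the definition of $R(E_7)$. I would then introduce the auxiliary vector
\[
\mu := (\rho - \delta) + (\beta - \gamma) \in \Lambda(E_8),
\]
which satisfies $\mu \cdot \rho = 1$ and, by the orthogonality just noted, $\mu^2 = 2 + 0 + 2 = 4$. The second family of bounding inequalities in Proposition~\ref{propboundingineqm1} then yields $\beta \cdot \mu \leq 2$. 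Expanding using $\beta \cdot \rho = 1$ and the identity $\beta \cdot (\beta - \gamma) = 1$ gives $(1 - \beta \cdot \delta) + 1 \leq 2$, i.e.\ $\beta \cdot \delta \geq 0$.

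The main obstacle is guessing the correct auxiliary vector for the first assertion; once one sees that shifting $\delta$ by $\rho$ and adding the orthogonal root $\beta - \gamma$ produces a squared-norm-$4$ lattice element in $\Lambda(E_8)$, both halves of the lemma reduce to direct applications of the two families of bounding inequalities collected in Proposition~\ref{propboundingineqm1}.
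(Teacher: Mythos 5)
Your proof is correct and follows essentially the same route as the paper: both parts rest on the two $W(E_7)$-invariant families of bounding inequalities of Proposition~\ref{propboundingineqm1}, with exactly the paper's auxiliary vector $\mu=\rho+\beta-\gamma-\delta$ for the first assertion. The only (harmless) cosmetic differences are that you first pin down $\beta\cdot(\beta-\gamma)=1$ and apply the norm-$4$ inequality once, where the paper adds two such inequalities, and you verify $\zeta\cdot(\beta-\gamma)=0$ via $\tilde\beta$ instead of via $\tilde\alpha$ and $\alpha\cdot(\beta-\gamma)=0$.
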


\begin{proof}
Recall the bounding inequalities for $P^{(1)}$ given in Proposition
\ref{propboundingineqm1}. Note that $\mu=\rho-\gamma+\beta-\delta \in
\Lambda(E_8)$ satisf\/ies $\mu \cdot \rho=1$ and $\mu\cdot \mu=4$, thus we
get $\beta \cdot (\rho-\gamma+\beta-\delta) \leq 2$ and similarly
$\gamma\cdot (\rho+\gamma-\beta-\delta) \leq 2$. Adding these two
inequalities and simplifying gives $\delta \cdot (\beta + \gamma) \geq 0$,
and as $(\beta -\gamma) \cdot \delta=0$, this implies $\delta \cdot \beta
\geq 0$.

Now observe that
\[
\zeta\cdot (\beta-\gamma) = (\tilde \alpha - \alpha) \cdot (\beta-\gamma) =
\tilde \alpha \cdot (\tilde \beta-\tilde \gamma) - \alpha \cdot (\beta-\gamma)
=-\alpha \cdot (\beta-\gamma),
\]
where in the last equality we used that $\tilde \alpha \cdot \tilde \beta=1=\tilde \alpha \cdot \tilde \gamma$. Thus we need to show that $\alpha \cdot (\beta-\gamma)=0$.
By the bounding inequalities we have $\alpha \cdot (\alpha-\beta) \leq 1$,
but also
\[
\alpha \cdot (\alpha-\beta) = (\alpha-\beta)\cdot (\alpha -\beta) - \beta\cdot(\beta-\alpha) = 2 - \beta\cdot(\beta-\alpha) \geq 1.
\]
Thus we f\/ind $\alpha \cdot (\alpha -\beta)=1$. By symmetry we also have
$\alpha \cdot (\alpha -\gamma)=1$. Thus it follows that $\alpha \cdot
\beta = \alpha \cdot \gamma$, or $\alpha \cdot (\beta-\gamma)=0$.
\end{proof}

The lemma shows that $p^{\gamma \cdot \zeta}=p^{\alpha \cdot \zeta} = p^{\beta \cdot \zeta}$, so we can
divide by this term. Using this lemma we see that we can subsequently take the limit $p\to 0$ in
\eqref{eqconttemp} directly as the arguments of the $\theta$ functions do
not depend on $p$, while the arguments of the $q$-shifted factorials are
either independent of $p$ or vanish as $p\to 0$.
\end{proof}

The relations obtained in this way are three-term relations. By the
geometry of the polytope the $\alpha$, $\beta$ and $\gamma$ in the above
proposition must be such that the faces they are contained in, are in the
same $W(E_7)$ orbit. In particular we can rewrite our three term relation
as a relation between three instances of the same function. Thus we get as
examples three-term relations for ${}_3\phi_2$'s \cite[(III.33)]{GandR}.
Moreover we obtain the six-term relations of ${}_{10}\phi_9$'s as studied
in \cite{GM} and \cite{LvdJ}.

One reason why the $p$-contiguous relations morally should exist on the
elliptic level is that the three functions related by $p$-shifts in roots
of $E_7$ satisfy the same second order $q$-dif\/ference equations (after a
suitable gauge transformation). In particular we can take the limit of
these $q$-dif\/ference equations and see that $B_{\alpha}^1$, $B_{\beta}^1$
and $B_{\gamma}^1$ also satisfy the same second order $q$-dif\/ference
equations. In a very degenerate case, there exist faces for which to a
vector $\alpha$ in that face there exists exactly one root $r \in R(E_7)$
such $\alpha+r \in P^{(1)}$. In particular, while we cannot f\/ind a three
term relation in this case, we do obtain the second solution of the
corresponding $q$-dif\/ference equations. In the general case we can obtain
the symmetry group of the $q$-dif\/ference equations by looking at the
stabilizer of the shifted lattice $\alpha + \Lambda(E_7)$ for a generic
point $\alpha$ in the face. This stabilizer, the stabilizer of $\alpha$
under the {\em affine} Weyl group, is denoted the af\/f\/ine symmetry group in
Fig.~\ref{degscheme1}.

\section[An extended example: ${}_2\phi_1$]{An extended example: $\boldsymbol{{}_2\phi_1}$}\label{Heine}

In this section we consider the simplicial face with vertices $w_{01}$,
$w_{02}$, $v_{67}$ and $v_{57}$. The centroid of this face is the point
$\alpha=(-1/4,0,0,1/4,1/4,1/2,1/2,3/4)$, and we f\/ind using Proposition~\ref{propsymbreak} 
that the limit can be
expressed as
\begin{gather*}
B_{\alpha}^1(u) = \frac{(q,u_3u_0,u_4u_0;q)}{(q/u_1u_2;q)} \int \theta(u_0u_1u_2/z;q)
\frac{(qz/u_7;q)}{(u_0/z,u_3z,u_4z;q)} \frac{dz}{2\pi i z} \\
\phantom{B_{\alpha}^1(u)}{} =
(u_1u_2,qu_0/u_7;q) {}_2\phi_1\left( \begin{array}{c} u_0u_3,u_0u_4 \\ qu_0/u_7\end{array} ;q,u_1u_2\right),
\end{gather*}
as long as this series converges (this integral expression for a
${}_2\phi_1$ is not very exciting, as it is related to the series by
picking up the residues upon moving the integration contour to zero).

The stabilizer group of this face under the $W(E_7)$ action equals the
stabilizer of $\alpha$ (as it should be a permutation of the four vertices
of the face). However those ref\/lections in $W(E_7)$ which non-trivially
permute the vertices of the face are in roots which are the dif\/ference of
two vertices, so they will just induce a shift along a vector in the face;
as our functions only depend on the space orthogonal to the face, they act
as identity on our function (for example they permute $u_1\leftrightarrow
u_2$ or $u_5\leftrightarrow u_6$). Thus we are only interested in those
elements of $W(E_7)$ which leave the four vertices of this face
invariant. In particular, this includes (and by Coxeter theory, is
generated by) the ref\/lections in the hyperplanes orthogonal to the roots
$\{ \pm(e_3-e_4),\pm (\rho-e_0-e_3-e_4-e_7), \pm (\rho-e_0-e_3-e_5-e_6),
\pm (\rho-e_0-e_4-e_5-e_6)\}$. These eight roots form the root system of
$A_2\times A_1$, thus the symmetry group of a ${}_2\phi_1$ is $W(A_2\times
A_1)$, or the permutation group $S_3\times S_2$. For example the ref\/lection
$s_{\rho-e_0-e_3-e_4-e_7}$ generates the symmetry $u\mapsto
(u_0/s,u_1s,u_2s,u_3/s,u_4/s,u_5s,u_6s,u_7/s)$, with $s=\sqrt{u_0
 u_3u_4u_7/q}$, or
\begin{gather*}
(u_1u_2,qu_0/u_7;q) {}_2\phi_1\left( \begin{array}{c} u_0u_3,u_0u_4 \\ qu_0/u_7\end{array} ;q,u_1u_2\right)
\\ \qquad{} =
(u_0u_1u_2u_3u_4u_7/q,qu_0/u_7;q) {}_2\phi_1\left( \begin{array}{c} q/u_4u_7,q/u_3u_7 \\ qu_0/u_7\end{array} ;q,u_0u_1u_2u_3u_4u_7/q \right) ,
\end{gather*}
or simplifying we get
\begin{equation*}
(z,c;q) {}_2\phi_1\left( \begin{array}{c} a,b \\ c\end{array} ;q,z\right)
=
(abz/c, c;q) {}_2\phi_1\left( \begin{array}{c} c/b,c/a \\ c\end{array} ;q,abz/c\right) ,
\end{equation*}
which is one of Heine's transformations \cite[(III.3)]{GandR}. Similarly
related to $s_{\rho-e_0-e_4-e_5-e_6}$ we obtain
\[
(z,c;q) {}_2\phi_1\left( \begin{array}{c} a,b \\ c\end{array} ;q,z\right)
 =
(c/a, az;q) {}_2\phi_1\left( \begin{array}{c} a,abz/c \\ az\end{array} ;q,c/a\right) ,
\]
another one of Heine's transformations \cite[(III.2)]{GandR}. Together with
the permutation swapping $a$ and $b$ (given by $s_{e_3-e_4}$) these two
transformations generate the entire symmetry group.

As for transformations to other functions, there are no less than 6 other
faces in the $W(E_7)$-orbit of the face containing $\alpha$ up to $S_8$
symmetry. The related transformations are given by (after some
simplif\/ication)
\begin{gather*}
(z,c;q) {}_2\phi_1\left( \begin{array}{c} a,b \\ c\end{array} ;q,z\right)
 = (bz,c;q) {}_2\phi_2 \left( \begin{array}{c} b,c/a \\ bz,c \end{array} ;q,az\right)
\\ \qquad{} = \frac{(a,b,abz/c,q;q)}{2} \int \frac{(y^{\pm 2}, \sqrt{cz} y^{\pm 1};q)}{(\sqrt{c/z} y^{\pm 1}, a\sqrt{z/c} y^{\pm 1}, b\sqrt{z/c} y^{\pm 1};q)} \frac{dy}{2\pi i y}
\\ \qquad{} = \frac{(z,c/b,c/a;q)}{(c/ab;q)}
\rphis{3}{2}
\left( \begin{array}{c} abz/c, a,b \\ qab/c,0 \end{array};q,q\right) +
\frac{(a,b,abz/c;q)}{(ab/c;q)} {}_3\phi_2\left( \begin{array}{c} z,c/a,c/b \\ qc/ab,0 \end{array};q,q\right)
\\ \qquad{} = \frac{(z,abz/c,c;q)}{(bz/c;q)} {}_3\phi_2\left( \begin{array}{c} c/b,a,0 \\ qc/bz,c \end{array};q,q\right)
+ \frac{(c/b,a,bz;q)}{(c/bz;q)} {}_3\phi_2\left( \begin{array}{c} z, abz/c,0 \\ qbz/c,bz \end{array} ;q,q\right)
 \\ \qquad{} =
\frac{(az,bz,c;q)}{(abz;q)} \rWsn{6}{5}{2}(\frac{abz}{q}; a,b,abz/c;q,cz)
 \\ \qquad{}= (q;q) \int \theta(z/y;q) \frac{(cy,aby;q)}{(ay,by, cy/z;q)} \frac{dy}{2\pi iy}.
\end{gather*}

Let us now consider the three term relations (as limit of $p$-contiguous relations). The points~$\beta$ in the
polytope with $\alpha-\beta \in \Lambda(E_7)$ in the root lattice of $E_7$ are
\begin{center}
\begin{tabular}{l|l}
$\beta$ & $B_{\beta}^1$ \\
\hline
\tsep{2ex} $(-\frac{1}{4},0,0,\frac{1}{4},\frac{1}{4},\frac{1}{2},\frac{1}{2},\frac{3}{4})=\alpha$ & $
(u_1u_2,qu_0/u_7;q) {}_2\phi_1\left( \begin{array}{c} u_0u_3,u_0u_4 \\ qu_0/u_7\end{array} ;q,u_1u_2\right)
$ \\
\tsep{2ex} $(\frac34,0,0,\frac14,\frac14,\frac12,\frac12,-\frac14)$ &
$
(u_1u_2,qu_7/u_0;q) {}_2\phi_1\left( \begin{array}{c} u_7u_3,u_7u_4 \\ qu_7/u_0\end{array} ;q,u_1u_2\right)
 $
 \\
\tsep{2ex} $(\frac14,\frac12,\frac12,\frac34,-\frac14,0,0,\frac14)$ &
$
(u_5u_6,qu_4/u_3;q) {}_2\phi_1\left( \begin{array}{c} u_0u_4,u_7u_4 \\ qu_4/u_3\end{array} ;q,u_5u_6\right)
 $
\\
\tsep{2ex} $(\frac14,\frac12,\frac12,-\frac14,\frac34,0,0,\frac14)$ &
$
(u_5u_6,qu_3/u_4;q) {}_2\phi_1\left( \begin{array}{c} u_0u_3,u_7u_3 \\ qu_3/u_4\end{array} ;q,u_5u_6\right)
 $
\end{tabular}
\end{center}
Here we can use the balancing condition $\prod_r u_r =q^2$ to rewrite $u_5u_6$ in terms of the previous parameters.
Any three of these four functions now give a three term relation, for example (after simplif\/ication)
\begin{gather*}
(bq/c,q/a,c;q) \frac{az}{c} \theta(c/az;q) {}_2\phi_1 \left( \begin{array}{c}
a,b \\ c \end{array} ;q, z\right) \\
 \qquad{} + (b,c/a,q^2/c;q) \frac{q}{c} \theta(az/q;q) {}_2\phi_1 \left(
 \begin{array}{c} aq/c,bq/c \\ q^2/c \end{array} ; q, z \right) \\
\qquad{} + (abz/c,cq/abz,bq/a;q) \theta(q/c;q) {}_2\phi_1 \left( \begin{array}{c} b,bq/c \\ bq/a\end{array} ; q, cq/abz \right) =0.
 \end{gather*}
The af\/f\/ine symmetry group is now given as the extension of the symmetry
group by also allowing elements which permute the four ${}_2\phi_1$'s
amongst themselves. Indeed the index $[W(A_3\times A_1) \colon W(A_2 \times
 A_1)]=4$. It is also the symmetry group of the $q$-dif\/ference equations
we discuss next.

The $q$-contiguous equations relate three terms of the form $B_{\alpha}^1(u
\cdot q^{\beta})$ where $\beta$ is in the projection $\Lambda_\alpha$ of
$\Lambda(E_7)$ on the space orthogonal to the face containing $\alpha$. In
particular the lattice $\Lambda_{\alpha}$ is generated by ($\pi$ denotes
the orthogonal projection on $\Lambda_\alpha$)
\begin{gather*}
 \pi(0,0,0,1,0,-1,0,0) = (1/4,0,0,3/4,-1/4,-1/2,-1/2,1/4),
 \\ \pi(0,0,0,0,1,-1,0,0) = (1/4,0,0,-1/4,3/4,-1/2,-1/2,1/4),
 \\ \pi(0,0,0,0,0,1,0,-1) = (1/4,0,0,-1/4,-1/4,1/2,1/2,-3/4),
 \\ \pi(0,0,1,0,0,-1,0,0) = (0,1/2,1/2,0,0,-1/2,-1/2,0).
\end{gather*}
If we simplify ${}_2\phi_1$ by setting $a=u_0u_3$, $b=u_0u_4$, $c=qu_0/u_7$
and $z=u_1u_2$, these four vectors correspond to multiplying respectively
$a$, $b$, $c$, or $z$ by $q$. In particular we have a relation for any
three sets of parameters where $a$, $b$, $c$ and $z$'s dif\/fer by an integer
power of $q$. For example using shifts $\pi(\rho - e_2-e_4-e_5-e_6)$,
$\pi(e_1-e_5)$ and $\pi(e_1-e_2)$ (corresponding to $a\mapsto aq$,
$z\mapsto qz$ and doing nothing), we get
\[
-(1-a) \rphisx{2}{1}{aq,b \\c }{z} - a\rphisx{2}{1}{a,b \\c}{qz} + \rphisx{2}{1}{a,b \\c}{z}=0.
\]

\section[Evaluations: the $m=0$ case]{Evaluations: the $\boldsymbol{m=0}$ case}\label{seceval}

In the $m=0$ case the general polytope picture is somewhat unsatisfying as
a description of the possible limits of the elliptic hypergeometric beta
integral evaluation. Indeed there are two issues. First of all the
polytope $P^{(0)}$ as described in Section~\ref{secpolytope} is not the
entire polytope for which proper limits exist, indeed we can give a larger
polytope for which this is true. Secondly, if we are interested in knowing
what the dif\/ferent evaluations on the basic hypergeometric level are, it
seems more natural to look at $P_{\rm I}^{(0)}$, $P_{\rm II}^{(0)}$ and
$P_{\rm III}^{(0)}$, and consider the faces of these polytopes, instead of
looking at $P^{(0)}$.

Let us f\/irst consider this second issue. In Section \ref{secpolytope} we
have actually shown that to each face of $P_{\rm I}$, $P_{\rm II}$ and $P_{\rm III}$
there is associated a function, which depends only on the space orthogonal
to that face. Moreover the iterated limit property holds in these
polytopes. If we therefore want to know what the dif\/ferent limit
evaluations are, we only have to write down the faces of these three
polytopes and the associated functions with their evaluations. As all
faces of these polytopes are simplicial, except for the interior of
$P_{\rm II}$, this is a simple combinatorial argument. All faces are listed in
the appendix in Fig.~\ref{degscheme2}. For those faces of $P$ which are
split in dif\/ferent faces of $P_{\rm I}$, $P_{\rm II}$ and $P_{\rm III}$ (i.e.\ the
interior and the facets given by $\alpha_r+\alpha_s+\alpha_t+\alpha_u=0$)
the value of the evaluation might be the same on all these dif\/ferent faces
of the smaller polytopes, but as the functions are a priori dif\/ferent, we
do obtain a dif\/ferent evaluation formula for each of these faces.

Now we look at the larger polytope.

\begin{Definition}
Def\/ine the extended polytope $P_{\rm ext}$ to be the polytope given as the
convex hull of the vectors $e_j$ ($0\leq j\leq 5$) and $f_j=\rho^{(0)} -
2e_j$ ($0\leq j\leq 5$).
\end{Definition}

The bounding inequalities are

\begin{Proposition}
The bounding inequalities of $P_{\rm ext}$ inside the subspace $2\rho^{(0)}\cdot \alpha=1$ are given~by
\begin{gather*}
 \alpha_r+\alpha_s \leq 1, \qquad (0\leq r<s\leq 5).
\end{gather*}
\end{Proposition}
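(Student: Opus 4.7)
The plan is to show that $P_{\rm ext}$ coincides with the polytope $Q$ defined inside the hyperplane $\sum_r \alpha_r = 1$ by the inequalities $\alpha_r + \alpha_s \leq 1$ for all $0 \leq r < s \leq 5$. The inclusion $P_{\rm ext} \subseteq Q$ is immediate from checking generators: at $\alpha = e_j$ one has $\alpha_r + \alpha_s \in \{0,1\}$, while at $\alpha = f_j = \rho^{(0)} - 2 e_j$ one has $\alpha_r + \alpha_s = -1$ when $j \in \{r,s\}$ and $\alpha_r + \alpha_s = 1$ otherwise; in every case the sum is at most $1$, and convexity propagates the inequality to the whole convex hull.

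For the reverse inclusion I would enumerate the vertices of $Q$ and check that they all already lie in $P_{\rm ext}$. Inside the five-dimensional hyperplane, a vertex of $Q$ must make at least five of the inequalities $\alpha_r + \alpha_s \leq 1$ tight with the resulting linear system (together with $\sum_r\alpha_r=1$) of full rank. For such $\alpha$ I form the tight graph $G$ on $\{0,\dots,5\}$ with edge set $\{\{r,s\} : \alpha_r + \alpha_s = 1\}$. Two structural facts apply: a connected component of $G$ containing an odd cycle forces all its coordinates to equal $1/2$ (so the component is actually the complete graph $K_n$ on its vertex set), and a connected bipartite component with bipartition $(A, B)$ forces $\alpha_i = c$ for $i \in A$ and $\alpha_j = 1-c$ for $j \in B$ for some $c$ (so it is complete bipartite $K_{|A|,|B|}$). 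Writing $b$ for the number of bipartite components carrying at least one edge, the rank of the tight system equals $|V(G)| - b$, so vertex-hood of $\alpha$ forces $|V(G)| - b = 5$.

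A short case analysis on this identity then leaves only two possibilities. If $|V(G)| = 5$ and $b = 0$, the tight graph is $K_5$ together with one isolated vertex $j$, so $\alpha_i = 1/2$ for $i\neq j$ and the constraint $\sum_r \alpha_r = 1$ gives $\alpha_j = -3/2$, i.e., $\alpha = f_j$. If $|V(G)| = 6$ and $b = 1$, the graph decomposes as a complete bipartite $K_{p,q}$ together with complete-graph pieces $K_{n_i}$ with $n_i \geq 3$ covering the remaining vertices; feasibility of the strict inequality $\alpha_r + \alpha_s < 1$ at non-tight pairs together with $\sum_r\alpha_r = 1$ eliminates every configuration except $K_{1,5}$ with $c = 1$, giving $\alpha = e_j$. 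Once $\{e_j, f_j : 0 \leq j \leq 5\}$ is confirmed to be exactly the vertex set of $Q$, the equality $Q = P_{\rm ext}$ follows, which is the content of the proposition.

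The main obstacle is executing this case analysis cleanly, especially excluding the close-call configurations $K_{2,4}$, $K_{3,3}$, and mixed shapes such as $K_{1,2} \sqcup K_3$. Each is ruled out in a uniform way: either the sum constraint $\sum_r\alpha_r = 1$ forces the parameter $c$ to a value that violates $2c < 1$ or $2(1-c)<1$ at a same-side non-tight pair, or a cross pair between a bipartite $c$-vertex and a neutral $1/2$-vertex in a $K_{n_i}$ piece yields $c + 1/2 = 1$ (so that pair must have been tight, contradicting the assumed component structure). Handling these sub-cases is routine but must be done carefully to ensure nothing is missed.
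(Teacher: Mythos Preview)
Your argument is correct but takes a genuinely different route from the paper's. The paper simply invokes the facet-finding method already developed in Proposition~\ref{propboundingineq}: one intersects $P_{\rm ext}$ with a fundamental Weyl chamber for its $S_6$ symmetry, lists the relevant vectors (the two vertex representatives $e_5$, $f_0$ together with the chamber-wall normals $e_i-e_{i+1}$, seven vectors in all), and for each choice of five of them solves for a candidate facet normal~$\mu$; this is the ``$\binom{7}{2}$ options'' alluded to there. You instead prove $P_{\rm ext}=Q$ by enumerating the vertices of $Q$ through the tight-edge graph and a case split. Both approaches are valid; the paper's has the virtue of recycling machinery already in place, while yours is self-contained and explains directly why only the $e_j$ and $f_j$ survive as extreme points. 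One bookkeeping wrinkle worth tightening: your rank formula ``rank $=|V(G)|-b$'' is correct only if $|V(G)|$ denotes the number of \emph{non-isolated} vertices, which clashes with your stated convention $V(G)=\{0,\dots,5\}$; and edge rank~$5$ is not quite sufficient for vertex-hood, since the hyperplane constraint $\sum_r\alpha_r=1$ must also be linearly independent of the tight edge equations (this, rather than an inequality violation, is what actually eliminates $K_{3,3}$ and $K_{1,1}\sqcup K_4$). Your case analysis disposes of every configuration correctly regardless, so the proof goes through once these points are stated precisely.
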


\begin{proof}
This follows from a calculation as in Proposition~\ref{propboundingineq}
(though now we have $\binom{7}{2}$ options as we need to take two vectors
from seven).
\end{proof}

The following proposition follows immediately from the evaluation formula~\eqref{eqeval}.

\begin{Proposition}
For $\alpha \in P_{\rm ext}$ the limit \eqref{eqlim} exists and
$B_{\alpha}^0(u)$ is the same for each $\alpha$ in a~face of $P_{\rm ext}$ and
depends only on $u$ orthogonal to the face containing $\alpha$. Moreover
the iterated limit property holds.
\end{Proposition}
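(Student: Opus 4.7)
The plan is to read everything off directly from the Spiridonov evaluation formula \eqref{eqeval}, which gives $E^0(t)$ as a product of $(p,q)$-shifted factorials; no contour manipulation or residue analysis is needed. Substituting $t_r=u_r p^{\alpha_r}$ into \eqref{eqeval} gives
\[
E^0(u\cdot p^{\alpha}) = \prod_{0\leq r<s\leq 5} \big(p^{1-\alpha_r-\alpha_s}\,q/(u_ru_s);p,q\big),
\]
so the whole problem reduces to understanding the limit of a single $(p,q)$-shifted factorial $(p^\gamma z;p,q)$ as $p\to 0$. From the explicit product definition one checks (as in the proof of Proposition \ref{proptrivlimit}) that this limit equals $(z;q)$ when $\gamma=0$, equals $1$ when $\gamma>0$, and fails to exist when $\gamma<0$.

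Applying this factor-by-factor, the limit \eqref{eqlim} exists precisely when $\alpha_r+\alpha_s\leq 1$ for all pairs $r<s$, i.e.\ exactly on $P_{\rm ext}$ by the previous proposition, and then
\[
B^0_\alpha(u) = \prod_{\substack{0\leq r<s\leq 5\\ \alpha_r+\alpha_s=1}} \big(q/(u_ru_s);q\big).
\]
Since the set of pairs $\{r<s:\alpha_r+\alpha_s=1\}$ is precisely the set of bounding inequalities of $P_{\rm ext}$ attained with equality at $\alpha$, it is constant as $\alpha$ ranges over a single (open) face, which yields the face-dependence claim.

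For orthogonality to the face, suppose $\alpha,\beta$ lie in the same open face, so $(\alpha-\beta)_r+(\alpha-\beta)_s=0$ for every active pair. Then
\[
(u\cdot x^{\alpha-\beta})_r(u\cdot x^{\alpha-\beta})_s = u_ru_s\cdot x^{(\alpha_r+\alpha_s)-(\beta_r+\beta_s)} = u_ru_s
\]
for each active pair, so $B^0_\alpha(u\cdot x^{\alpha-\beta})=B^0_\alpha(u)$, which is exactly the orthogonality statement of Theorem \ref{thmorthface} in this setting. The iterated limit property follows from the same computation: the convex combination $\gamma=t\alpha+(1-t)\beta$ with $0<t<1$ satisfies $\gamma_r+\gamma_s=1$ iff both $\alpha_r+\alpha_s=1$ and $\beta_r+\beta_s=1$, and applying $\lim_{x\to 0}$ to $B^0_\alpha(x^{\beta-\alpha}u)$ keeps exactly those factors where $\alpha_r+\alpha_s=1$ and additionally $\beta_r+\beta_s=1$ (strict inequality sends the factor to $1$), producing $B^0_\gamma(u)$. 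There is really no hard step here; the only thing to watch is the bookkeeping of which pairs remain active, and this is handled uniformly by the inequality $\alpha_r+\alpha_s\leq 1$.
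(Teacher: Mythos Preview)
Your proof is correct and follows exactly the approach the paper indicates: the paper simply states that the proposition ``follows immediately from the evaluation formula~\eqref{eqeval}'' and gives no further argument, so you have merely spelled out what the authors leave implicit. Your factor-by-factor analysis of $(p^{\gamma}z;p,q)$ and the bookkeeping of active pairs $\{r<s:\alpha_r+\alpha_s=1\}$ is precisely the intended computation.
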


The question this immediately raises is to what extent one can give series
or integrals corresponding to points in $P_{\rm ext} \backslash P^{(0)}$. So
far we have not been able to give a good description of these limits, let
alone a classif\/ication. However we expect this is where we have to look
for evaluations of bilateral series. In the next section we consider more
generally what we expect.

\section{Going beyond the polytope}\label{secrq}
As indicated in the previous section there exist proper limits outside the
polytopes as described in Section \ref{secpolytope}. While we only know of
the existence of proper limits as in \eqref{eqlim} outside of $P^{(m)}$ in
the case $m=0$, if we let $p$ tend to zero along a geometric progression
and rescale the functions we can obtain limits in many more cases. In
general these limits will depend on what geometric progression we use for
$p$ (unlike in the $m=0$ case). We do not know for which points in
$\mathbb{R}^{2m+6}$ we can take limits in this way but it does seem to
provide a very rich extra set of functions. In particular, this seems to be
where bilateral series reside. For instance, Chen and Fu \cite{CF}
prove a~${}_2\psi_2$ transformation as a limit of (in our notation) a
transformation of $B^{(1)}_{(w^{(1)}_{01}+w^{(1)}_{02}+v^{(1)}_{67})/3}$,
taken in a direction pointing outside the Hesse polytope.

As an example we consider $m=1$ and a limit along $\alpha=v_1 + \epsilon
v_2$ for small $\epsilon>0$ and $v_1$ a vertex of $P^{(1)}$ and $v_2$ a
root of $E_7$ with $v_1 \cdot v_2=0$. Note that all of these limits are
related to each other by the Weyl group of $E_7$ action, so we expect to
obtain transformation formulas relating the functions associated to these
vectors. Up to permutations we have the following 6 options:
\begin{enumerate}\itemsep=0pt
\item $(0,0,0,0,0,0,1-\epsilon, 1+\epsilon)$;
\item $(-\epsilon/2, -\epsilon/2, -\epsilon/2, \epsilon/2, \epsilon/2,\epsilon/2,1-\epsilon/2,1+\epsilon/2)$;
\item $(-\epsilon,0,0,0,0,\epsilon, 1,1) $;
\item $(-1/2,-1/2,1/2-\epsilon,1/2,1/2,1/2,1/2,1/2+\epsilon)$;
\item $(-1/2-\epsilon/2, -1/2+\epsilon/2, 1/2-\epsilon/2,1/2-\epsilon/2,1/2-\epsilon/2,1/2-\epsilon/2,1/2-\epsilon/2)$;
\item $(-1/2-\epsilon, -1/2+\epsilon, 1/2,1/2,1/2,1/2,1/2,1/2)$.
\end{enumerate}

For some of these we can obtain limits as integrals. In the rest of this
section we suppose $\epsilon =1/N$ for some large integer $N$, and $p=x^N
q^{kN}$ (where $k$ is allowed to vary).
\begin{Proposition}
For $\alpha = (0,0,0,0,0,0,1-\epsilon,1+\epsilon)$ we have
\[
\lim_{k\to \infty} E_1(p^{\alpha} \cdot u) (xu_7)^{2k} q^{2 \binom{k}{2}}
= \prod_{0\leq r<s\leq 5} (u_ru_s;q) \frac{(q;q)}{2}
\int \frac{\theta(x u_7 z^{\pm 1};q) (z^{\pm 2};q) }{\prod\limits_{r=0}^5 (u_r z^{\pm 1};q)} \frac{dz}{2\pi iz}.
\]
\end{Proposition}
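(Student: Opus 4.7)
The plan is to substitute $t_r = u_r p^{\alpha_r}$ into the integral definition of $E^1$ and handle the one problematic factor (indexed by $r=7$, whose exponent $\alpha_7 = 1+\epsilon$ lies outside the range $[0,1]$) by a single application of the difference equation for the elliptic gamma function, before passing to the limit as in Propositions~\ref{proptrivlimit}--\ref{propsumintegral}. Writing $p^{\epsilon} = xq^k$, the relation $\Gamma(pw) = \theta(w;q)\Gamma(w)$ of \eqref{eqellgammadiff} gives
\[
\Gamma\bigl(u_7 p^{1+\epsilon} z^{\pm 1}\bigr) = \theta\bigl(u_7 x q^k z^{\pm 1};q\bigr)\,\Gamma\bigl(u_7 x q^k z^{\pm 1}\bigr).
\]
The quasi-periodicity identity $\theta(q^k w;q) = (-w)^{-k} q^{-\binom{k}{2}}\theta(w;q)$, applied once with $w = xu_7 z$ and once with $w = xu_7/z$, then yields
\[
\theta\bigl(u_7 x q^k z^{\pm 1};q\bigr) = (xu_7)^{-2k}\, q^{-2\binom{k}{2}}\,\theta\bigl(xu_7 z^{\pm 1};q\bigr),
\]
which is exactly the $k$-dependent divergence that the normalization $(xu_7)^{2k} q^{2\binom{k}{2}}$ in the proposition is designed to cancel.

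After multiplication by this rescaling, the integrand takes the form
\[
\theta\bigl(xu_7 z^{\pm 1};q\bigr) \cdot \frac{\prod_{r=0}^{5} \Gamma(u_r z^{\pm 1})\,\Gamma\bigl(u_6 p^{1-\epsilon} z^{\pm 1}\bigr)\,\Gamma\bigl(u_7 xq^k z^{\pm 1}\bigr)}{\Gamma(z^{\pm 2})},
\]
and every factor now has a well-defined pointwise limit as $k\to\infty$: by the limit lemma in the proof of Proposition~\ref{proptrivlimit}, $\Gamma(u_r z^{\pm 1}) \to 1/(u_r z^{\pm 1};q)$ for $r = 0,\dots,5$, while $\Gamma(u_6 p^{1-\epsilon} z^{\pm 1}) \to 1$ since $1-\epsilon \in (0,1)$, and $\Gamma(u_7 xq^k z^{\pm 1}) \to 1$ because its argument tends to $0$. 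Simultaneously, the prefactor $\prod_{0\le r<s\le 7}(t_r t_s;p,q)$ factorizes: every pair involving index $6$ or $7$ carries a positive power of $p$ and so converges to $1$, while the remaining pairs contribute $\prod_{0 \le r<s\le 5}(u_r u_s; q)$. Together with $(p;p)\to 1$, this reproduces the right-hand side of the claimed identity.

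The final step is to justify interchanging the limit and the contour integral. For all sufficiently large $k$ the poles introduced by $t_6$ and $t_7$ lie in $\{|z|: |z|\to 0\}$ and their reciprocals escape to infinity, so a single contour close to the unit circle separates the upward from the downward pole sequences uniformly in $k$. On such a contour the rescaled integrand is uniformly bounded in $k$ by the same kind of elliptic-gamma and theta-function estimates invoked in the proof of Proposition~\ref{propsum}, so dominated convergence applies, and Stieltjes--Vitali then extends the limit to all parameters $u$ satisfying the balancing condition. The only real subtlety is that both $p$ and the shift parameter $q^k$ must be sent to zero simultaneously; the key observation is that the quasi-periodicity of $\theta$ packages all the $k$-dependent growth into one explicit scalar, after which the remaining limit is straightforward.
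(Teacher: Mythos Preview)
Your proof is correct and follows essentially the same route as the paper's own argument: apply the difference equation \eqref{eqellgammadiff} once to the factor $\Gamma(p^{1+\epsilon}u_7 z^{\pm 1})$, use the quasi-periodicity of $\theta$ to extract the explicit $k$-dependent scalar $(xu_7)^{-2k}q^{-2\binom{k}{2}}$, and then pass to the limit termwise after observing that a fixed contour works for all large $k$. One small remark: the justification ``$\Gamma(u_7 xq^k z^{\pm 1})\to 1$ because its argument tends to $0$'' is a heuristic rather than the actual reason---the correct statement is that this is $\Gamma(p^{\epsilon}u_7 z^{\pm 1};p,q)$ with $0<\epsilon<1$ and $p\to 0$, which tends to $1$ by the limit lemma in the proof of Proposition~\ref{proptrivlimit}---but this does not affect the validity of your argument.
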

\begin{proof}
We calculate
\begin{gather*}
\lim_{k\to \infty} E_1(p^{\alpha} \cdot u) (xu_7)^{2k} q^{2 \binom{k}{2}} \\
\qquad{}
= \lim_{k\to \infty} (xu_7)^{2k} q^{2 \binom{k}{2}}
\prod_{0\leq r<s\leq 7} (p^{\alpha_r+\alpha_s} u_ru_s;p,q) \frac{(p;p)(q;q)}{2} \\
\qquad \qquad {}\times
\int \frac{\prod\limits_{r=0}^5 \Gamma(u_r z^{\pm 1}) \Gamma(p^{1-\epsilon} u_6 z^{\pm 1},p^{1+\epsilon} u_7 z^{\pm 1}) }{\Gamma(z^{\pm 2})} \frac{dz}{2\pi iz} \\
\qquad{}
=
\prod_{0\leq r<s\leq 5} (u_ru_s;q) \frac{(q;q)}{2}\lim_{k\to \infty} (xu_7)^{2k} q^{2 \binom{k}{2}} \\
 \qquad\qquad {}\times
 \int \frac{\theta(p^{\epsilon} u_7 z^{\pm 1};q) \prod\limits_{r=0}^5 \Gamma(u_r z^{\pm 1}) \Gamma(p^{1-\epsilon} u_6 z^{\pm 1},p^{\epsilon} u_7 z^{\pm 1}) }{\Gamma(z^{\pm 2})} \frac{dz}{2\pi iz} \\
\qquad{}
= \lim_{k\to \infty}
\prod_{0\leq r<s\leq 5} (u_ru_s;q) \frac{(q;q)}{2} \\
\qquad\qquad {}\times
\int \frac{\theta(x u_7 z^{\pm 1};q) \prod\limits_{r=0}^5 \Gamma(u_r z^{\pm 1}) \Gamma(p^{1-\epsilon} u_6 z^{\pm 1},p^{\epsilon} u_7 z^{\pm 1}) }{\Gamma(z^{\pm 2})} \frac{dz}{2\pi iz} \\
\qquad{}
= \prod_{0\leq r<s\leq 5} (u_ru_s;q) \frac{(q;q)}{2}
\int \frac{\theta(x u_7 z^{\pm 1};q) (z^{\pm 2};q) }{\prod\limits_{r=0}^5 (u_r z^{\pm 1};q)} \frac{dz}{2\pi iz}.
\end{gather*}
Here we used that all the poles are on the right side of the contour as
$p\to 0$, so we can interchange limit and integral as before. Moreover we
used that
\[
\theta(p^{\epsilon} y;q) = \theta(q^k x y;q) = \theta(xy;q) \left( -\frac{1}{xy} \right)^k q^{-\binom{k}{2}}
\]
for $y=u_7z$ and $y=u_7/z$.
\end{proof}

The next two limits are analogous.

\begin{Proposition}
For $\alpha =
(-1/2,-1/2,1/2-\epsilon,1/2,1/2,1/2,1/2,1/2+\epsilon)$ we have
\begin{gather*}
\lim_{k\to \infty} E_1(p^{\alpha} \cdot u) \left( \frac{qu_7 x^2}{u_0u_1u_2}\right)^k q^{2\binom{k}{2}}
=
(u_0u_1,q;q) \prod_{r=0}^1 \prod_{s=3}^6 (u_ru_s;q) \\
\qquad\qquad{} \times
\int \frac{(1-z^2) \theta(u_0u_1u_2/zx,xu_7/z;q) (qz/u_3,qz/u_4,qz/u_5,qz/u_6;q)}{(u_0z^{\pm 1}, u_1z^{\pm 1}, u_3z,u_4z,u_5z,u_6z;q)} \frac{dz}{2\pi i z}
\end{gather*}
and for $\alpha =
(-\epsilon/2,-\epsilon/2,-\epsilon/2,\epsilon/2,\epsilon/2,\epsilon/2,1-\epsilon/2,1+\epsilon/2)$
we have
\begin{gather*}
\lim_{k\to \infty} E_1(p^{\alpha} \cdot u) \left( \frac{qu_7
 x^2}{u_0u_1u_2}\right)^k q^{2\binom{k}{2}} =(q;q) \prod_{r=0}^2
\prod_{s=3}^5 (u_ru_s;q) \\
\qquad\qquad{} \times \int \frac{\theta(u_0u_1u_2/zx,u_7x/z;q)
 (q/u_7z,qz/u_6;q)}{(u_0/z,u_1/z,u_2/z,u_3z,u_4z,u_5z;q)} \frac{dz}{2\pi i
 z}.
\end{gather*}
\end{Proposition}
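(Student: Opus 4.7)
The plan is to mimic the proof of the preceding proposition, but starting from an appropriate symmetry-broken representation of $E^1$ chosen so that the negative-exponent parameters do not produce divergent elliptic gamma factors, then to rescale the integration variable, and finally to apply the quasi-periodicity identity
\[
\theta\big(p^{\pm\epsilon} y;q\big) = \theta\big(q^{\pm k} x^{\pm 1} y;q\big) = \theta\big(x^{\pm 1} y;q\big)\bigl(-x^{\mp 1} y^{\mp 1}\bigr)^{k} q^{-\binom{k}{2}}
\]
arising from the geometric progression $p = x^N q^{kN}$ with $\epsilon = 1/N$. The compensating factor $(qu_7 x^2/(u_0u_1u_2))^k q^{2\binom{k}{2}}$ in each case is forced by the accumulated $q$-powers produced by this identity.

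For the first case, since $\alpha_0 = \alpha_1 = -1/2$, the naive integrand contains factors $\Gamma(p^{-1/2} u_{0,1} z^{\pm 1})$ that diverge as $p\to 0$. I would therefore start from the symmetry-broken form \eqref{eqsymbreak2spec} specialized with $s_0=t_0$, $s_1=t_1$, $s_2=w$, and substitute $z \to p^{-1/2} z$ so that the factors $\Gamma(p t_r z, t_r/z)$ for $r=0,1$ become $\Gamma(u_r z, u_r/z)$ with no surviving $p$-dependence. The natural specialization is $w = x u_7$, which produces the $\theta(u_0u_1u_2/(zx), xu_7/z;q)$ factor in the stated answer (the $w$-independence of the integral, noted after Proposition~\ref{propsumintegral}, ensures this choice is harmless). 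For $r \in \{3,4,5,6\}$ with $\alpha_r = 1/2$, the resulting factors $\Gamma(u_r z)$ and $\Gamma(pu_r/z) \to (qz/u_r;q)$ behave as in Proposition~\ref{propsumintegral}. The only places where $p$-dependence stubbornly remains are the factors coming from $r = 2, 7$ (with $\alpha_r = 1/2 \mp \epsilon$) and the $\theta(p^\epsilon u_7 z^{\pm 1};q)$-type factor from the original symmetry breaker; after using $\Gamma(pz) = \theta(z;q)\Gamma(z)$ to push all elliptic gammas into the continuous range $[0,1]$, the leftover dependence collapses to a product of factors $\theta(p^{\pm \epsilon} (\cdots);q)$, which is handled via the quasi-periodicity identity above. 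A direct count of the resulting $q^{-\binom{k}{2}}$ and $(xu_7/\cdots)^k$ powers should match the stated normalization.

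For the second case, all exponents lie in $[-1/2, 1]$, so no elliptic gamma diverges individually; the vector sits outside $P^{(1)}$ only because $\alpha_0 + \alpha_1 + \alpha_2 + \alpha_3 = -\epsilon < 0$ violates a bounding inequality of Proposition~\ref{propboundingineq}. Here I would instead start from \eqref{eqsymbreak3spec} with $s_0=t_0$, $s_1=t_1$, $s_2=t_2$ (the three negative-$\alpha$ indices), specialize the free parameter to $x u_7$ where possible, and rescale $z \to p^{-\epsilon/2} z$ so that the three factors $\Gamma(t_r/z)$ for $r \leq 2$ lose their negative $p$-powers. After this the structure is completely analogous to the first case: the remaining $p$-dependence lives inside $\theta(p^\epsilon(\cdots);q)$ factors, which are converted into explicit $k$-dependent expressions via quasi-periodicity, and the compensating normalization $(qu_7x^2/(u_0u_1u_2))^kq^{2\binom{k}{2}}$ is then confirmed by direct bookkeeping.

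In both cases, the justification for interchanging limit and integral is exactly as in the preceding proposition: the $p$-dependent poles of the integrand either converge to $0$ or diverge to $\infty$ as $p\to 0$, so for small enough $p$ one can fix a $p$-independent contour separating the upward and downward sequences, and the resulting integrand converges uniformly on that contour. The principal technical obstacle is purely bookkeeping: carefully tracking the compounded factors of $(-y)^{-k} q^{-\binom{k}{2}}$ produced by each application of the quasi-periodicity identity, across all the $\theta$ and $\Gamma$ factors that acquire a $p^{\pm \epsilon}$ argument, and checking that they combine to yield precisely the asserted normalization. No new conceptual ingredient beyond what was already used to handle the case $\alpha = (0,0,0,0,0,0,1-\epsilon,1+\epsilon)$ is required.
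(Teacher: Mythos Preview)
Your overall strategy—start from a symmetry-broken representation of $E^1$, shift the integration variable so the troublesome gamma factors become tame, extract the residual $p$-dependence via the quasi-periodicity of $\theta$, and then interchange limit and integral—is correct and is what the paper does. Your handling of the second case (use \eqref{eqsymbreak3spec} with $s_r=t_r$ for $r=0,1,2$ and shift $z\to p^{-\epsilon/2}z$) works and gives the stated integrand directly.

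For the first case, however, there is a concrete error. You propose the two-specialization form \eqref{eqsymbreak2spec} with the free parameter set to $w = xu_7$, claiming this ``produces the $\theta(u_0u_1u_2/(zx), xu_7/z;q)$ factor.'' It does not. After the shift $z\to p^{-1/2}z$ the $z$-dependent theta factors of \eqref{eqsymbreak2spec} become $\theta(p^{-1/2}u_0u_1 w/z;q)$ and $\theta(p^{-1/2}wz;q)$; no choice of $w$ proportional to $u_7$ turns the first of these into $\theta(u_0u_1u_2/(zx);q)$ (the argument would involve $u_0u_1u_7$, not $u_0u_1u_2$), and the second has argument proportional to $z$, whereas both target thetas have argument proportional to $1/z$. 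Moreover, in \eqref{eqsymbreak2spec} the parameter $t_2$ still sits inside an ordinary factor $\Gamma(t_2z^{\pm1})$, and after your shift this contributes $\Gamma(p^{-\epsilon}u_2z)$, which forces an extra $1/\theta(u_2z/x;q)$ that has to be cancelled somehow.

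The paper instead uses the three-specialization form \eqref{eqsymbreak3spec} with $s_r=t_r$ for $r=0,1,2$ and the shift $z\to p^{-1/2}z$: then $\theta(t_0t_1t_2/z;q)=\theta(p^{-\epsilon}u_0u_1u_2/z;q)$ yields $\theta(u_0u_1u_2/(zx);q)$ after quasi-periodicity, while $\theta(xu_7/z;q)$ arises not from the symmetry-breaking thetas at all but from the gamma factor
\[
\Gamma(t_7/z)=\Gamma(p^{1+\epsilon}u_7/z)=\theta(p^{\epsilon}u_7/z;q)\,\Gamma(p^{\epsilon}u_7/z).
\]
If you insist on starting from \eqref{eqsymbreak2spec}, the only specialization that reproduces the stated formula is $w=t_2$ (equivalently $\tilde w\propto u_2/x$ after rescaling), and that simply collapses \eqref{eqsymbreak2spec} to \eqref{eqsymbreak3spec}.
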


\begin{proof}
The f\/irst integral is a direct limit in the symmetry broken integral
\eqref{eqsymbreak3spec}, while the second limit comes from the symmetric
integral with $z\to p^{\epsilon/2} z$.
\end{proof}

For the other three limits we are unable to describe $B_{\alpha}$ using an
integral, but we do have series representations of these limits. As in the
case of Proposition~\ref{propsum} proofs of these limits involve tedious
calculations, so we just give a short sketch. As we have found is quite
common for series representations outside the polytope, we obtain bilateral
series. For example

\begin{Proposition}
For $\alpha = w_{01} + \epsilon(e_1-e_0)$ that if $|u_0u_1|<1$ we have
\begin{gather*}
\lim_{k\to \infty} E_1(p^{\alpha} \cdot u) x^{2k} q^{2\binom{k}{2}} \left(\frac{q}{u_0^2}\right)^k
=
\frac{(u_0u_1;q) }{(q;q) } \theta(u_0^2/x^2;q)
\prod_{r=2}^7 (qx/u_ru_0,qu_0/u_r x;q) \\ \qquad\qquad{}\times
{}_8\psi_8 \left( \begin{array}{c} \pm qu_0/x, u_2u_0/x, \ldots, u_7u_0/x \\
\pm u_0/x, qu_0/u_2x, \ldots, qu_0/u_7x \end{array} ; q, u_0u_1\right).
\end{gather*}
\end{Proposition}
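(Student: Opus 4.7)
The plan is to adapt the residue-picking argument from the proof of Proposition~\ref{propsum}. With $\alpha = (-1/2-\epsilon,-1/2+\epsilon,1/2,\ldots,1/2)$ and $p = x^N q^{kN}$, the parameters $t_0 = u_0 p^{-1/2}/(xq^k)$ and $t_1 = u_1 p^{-1/2} xq^k$ both diverge as $k\to\infty$, but $t_0/t_1 \to\infty$. Consequently the ``inside'' sequence $z = t_0 q^l$ of $\Gamma(t_0/z)$ is largely outside the unit circle, while the ``outside'' sequence $z = q^{-l}/t_0$ of $\Gamma(t_0 z)$ is largely inside. I would deform the contour of $E^1(p^\alpha\cdot u)$ to the unit circle and collect the residues of all the poles crossed; by the $z\leftrightarrow z^{-1}$ symmetry of the integrand this amounts to twice the sum of residues at $z = t_0 q^l$ for $l$ ranging over an interval of length $\sim (N/2+1)k$ straddling zero. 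The analogous contributions from $t_1$ (and from mixed $t_0,t_1$ residues) must be shown to vanish in the scaling limit, using that $|t_1/t_0|^{|l|}\to 0$ uniformly on the relevant range; this is the mechanism by which the answer is a single bilateral series rather than a $u_0\leftrightarrow u_1$ symmetric sum as in Proposition~\ref{propsum}.

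Next, I would compute the residue at $z = t_0 q^l$ via the reflection equation $\Gamma(z)\Gamma(pq/z)=1$ and the difference equation $\Gamma(qz) = \theta(z;p)\Gamma(z)$, obtaining a product $\prod_{r\ne 0}\Gamma(t_rt_0q^l, t_r q^{-l}/t_0)$ times a theta-product ratio encoding the $l$-shift. Combining this with the prefactor $\prod_{r<s}(t_rt_s;p,q)$ and substituting $p^{1/2} = x^{N/2}q^{kN/2}$, the $p$-dependence of the gamma factors collapses to basic-hypergeometric limits as in Proposition~\ref{propsum}, while the residual $p$-dependence sits in the theta ratios. These I would expand using $\theta(p^\epsilon y;q) = \theta(xy;q)(-1/xy)^k q^{-\binom{k}{2}}$, exactly as in the two preceding propositions. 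The scaling factor $x^{2k}q^{2\binom{k}{2}}(q/u_0^2)^k$ is precisely what is required to absorb these shift factors; after a reindexing $l\mapsto l+c_k$ with $c_k$ growing linearly in $k$, the summation range becomes $\mathbb{Z}$ in the limit and each residue converges to the $l$-th term of the claimed ${}_8\psi_8$. The $l$-independent prefactor emerges naturally: $(q;q)^{-1}$ comes from the elliptic-gamma residue constant, $\theta(u_0^2/x^2;q)$ from the $\Gamma(t_0^{\pm 2})$ contribution at the leading pole, the products $(qu_0/u_rx, qx/u_ru_0;q)$ from limiting values of $\Gamma(t_rt_0, t_r/t_0)$ combined with the surviving prefactor, and $(u_0u_1;q)$ from the cross-term $(t_0t_1;p,q)$.

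The main obstacle will be the convergence analysis: showing that the residual integral on the unit circle vanishes uniformly under the rescaling, and justifying the interchange of the $k\to\infty$ limit with the bilateral summation. Both require uniform bounds on elliptic gamma functions analogous to those employed in the proof of Proposition~\ref{propsum}, together with a maximum-principle argument in the $p$-plane to handle the countably many $p$-dependent singular divisors. The hypothesis $|u_0u_1|<1$ enters in exactly this step: once the very-well-poised structure of the residue is unpacked, the ratio of consecutive terms in the resulting ${}_8\psi_8$ is asymptotically $u_0u_1$ as $l\to+\infty$ and $1/(u_0u_1)$ as $l\to-\infty$, so absolute convergence, and hence dominated convergence for the interchange of limit and sum, forces $|u_0u_1|<1$.
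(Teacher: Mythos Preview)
Your overall strategy---pick up the residues associated to the $t_0$-poles, show the $t_1$-residues and the residual contour integral vanish after rescaling, then justify the interchange of limit and sum via dominated convergence using $|u_0u_1|<1$---is correct and matches the paper's intent. However, the paper takes a slightly different technical route: rather than working with the symmetric integrand and invoking the $z\leftrightarrow z^{-1}$ symmetry to double one set of residues, it first passes to the symmetry-broken form \eqref{eqsymbreak2spec} with $s_0=t_0$, $s_1=t_1$ specialised and then substitutes $z\to p^{\epsilon-1/2}z$. This removes the mirror-image poles entirely and replaces $\Gamma(t_0z,t_1z)/\Gamma(z^{\pm 2})$ by $\Gamma(pt_0z,pt_1z)\theta(z^{-2};p)$, so one never has to confront the awkward $\Gamma(t_0^2 q^{2l})$-type factors that appear in a direct residue computation on the symmetric integrand. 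The paper then picks up residues at $z=u_0p^{-2\epsilon}q^n$ for $0\le n\le 2k$, observes the summand is maximised near $n=k$, and recenters via $n\mapsto n+k$ to obtain the bilateral sum. Your approach should ultimately arrive at the same place, but the symmetry-breaking step buys a considerably cleaner residue calculus.

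One minor correction: the maximum-principle argument in the $p$-plane from Proposition~\ref{propsum} is not needed here. In that proposition $p\to 0$ continuously and one must bridge the $p$-dependent singular divisors; here $p=x^Nq^{kN}$ runs over a discrete sequence indexed by $k$, and for generic $x$, $q$, $u$ these values avoid the singular divisors for all sufficiently large $k$, so the estimates apply directly.
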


\begin{proof}
To obtain this limit we look at the symmetry broken integral
\eqref{eqsymbreak2spec} with two $s_i$ speciali\-zed, and change the
integration variable $z\to p^{\epsilon-1/2}z$. Subsequently we pick up the
poles at $z=u_0p^{-2\epsilon}q^n$ (for $n=0$ to $n=2k$) and observe that
the remaining integral vanishes in the limit. Moreover the absolute value
of the summand in the sum of the residues is maximized near $n=k$ and we
can show that we can interchange limit and sum in $\sum_{n=-k}^k
\Res(z=u_0p^{-2\epsilon}q^{n+k})$, giving a bilateral sum.
\end{proof}

Note that for $|u_0u_1|\geq 1$ we do not have an explicit expression for
the limit. The limit does have an analytic extension to the region
$|u_0u_1|\geq 1$, but we can only prove
this by using the Weyl group symmetry to relate the limit to the previous
limits obtained.

In the case $|u_0u_1|>1$ we could again use the general method of obtaining
a limit: discovering where the integrand or residues are maximized, rescaling
properly and interchanging limit and sum/integral. In this case it would lead to
\[
\lim_{k\to \infty} E_1(p^{\alpha} \cdot u) x^{2k} q^{2\binom{k}{2}} \left(\frac{q}{u_0^3u_1}\right)^k
=
\frac{\prod\limits_{r=2}^7 \theta(u_rx/u_0;q)}{\theta(x^2/u_0^2;q)} \rphisx{1}{0}{u_0u_1 \\ -}{\frac{1}{u_0u_1}}
\]
by picking up the residues at $z=u_0p^{-\epsilon}q^n$ in the same integral
as above but with $z\to p^{-1/2}z$ instead of $z\to
p^{\epsilon-1/2}z$. Note that the right hand side vanishes by the
evaluation formula for a $\rphis{1}{0}$. Indeed we can also see that this
limit vanishes by applying the Weyl group symmetry before taking the limit
and observing a factor $\lim_{k\to \infty} (1/u_0u_1)^k=0$ remains after
using one of the integral limits above. In particular this shows one has to
be careful taking limits in order to obtain something interesting.

\begin{Proposition}
For $\alpha = (-\epsilon,0,0,0,0,\epsilon,1,1)$ and $|u_0u_5|<1$ we obtain
the limit
\begin{gather*}
\lim_{k\to \infty} E^1(p^{\alpha} \cdot u) \left( \frac{u_5u_6u_7}{u_0}\right)^k x^{2k} q^{2\binom{k}{2}}
 =
\theta(u_0u_4/x,u_0u_1u_2 u_3/x ;q) \\
 \qquad\qquad{} \times \frac{(u_0u_5,u_1u_4,u_2u_4,qu_3/u_1, qu_3/u_2, qu_3/u_6, q/u_3u_6,qu_3/u_7,q/u_3u_7;q)}
{(q/u_1u_2, qu_3^2 , u_4 /u_3;q)} \\
 \qquad \qquad {} \times
{}_8W_7( u_3^2; u_1u_3,u_2u_3,u_4u_3,u_6u_3,u_7u_3; u_0u_5 )
+ (u_3\leftrightarrow u_4).
\end{gather*}
Moreover for $\alpha = w_{01} + \epsilon (\rho - e_0-e_2-e_3-e_4)$ $($without convergence conditions$)$
\begin{gather*}
\lim_{k\to \infty} E^1(p^{\alpha} \cdot u) x^{2k} q^{2\binom{k}{2}} \left( \frac{u_5u_6u_7 }{u_0}\right)^k \\
\qquad{} =
 \frac{ (u_0u_1;q) \prod\limits_{r=2}^4 ( qx/u_0u_r,u_1u_r;q) \prod\limits_{r=5}^7 (qu_0 /xu_r ,u_0u_r;q)}
{(q,u_0^2 /x, x u_1/u_0;q)} \\ \qquad \qquad {} \times
{}_4\psi_4\left( \begin{array}{c} u_0^2/x, u_0u_2/x,u_0u_3/x,u_0u_4/x \\ qu_0/xu_1, qu_0/xu_5,qu_0/xu_6,qu_0/xu_7 \end{array} ;q,q \right)
\\ \qquad\qquad {}+
\frac{\prod\limits_{r=2}^4 \theta( u_0u_r/x;q) }{\theta(u_0/xu_1;q)}
\prod\limits_{r=5}^7 (qu_1/u_r,u_0u_r;q)
{}_4\phi_3 \left( \begin{array}{c} u_0u_1, u_2u_1,u_3u_1,u_4u_1 \\ qu_1/u_5,qu_1/u_6,qu_1/u_7 \end{array} ;q,q\right).
\end{gather*}
\end{Proposition}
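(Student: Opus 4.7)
The plan is to follow the template used in the proof of the preceding bilateral ${}_8\psi_8$ limit: start from a symmetry-broken integral representation of $E^1$, rescale the integration variable by an appropriate power of $p$, extract the sequences of residues whose poles cross the contour as $k\to\infty$, and then show that both the remaining contour integral and the residues lying outside the ``resonant'' sequences vanish in the limit. The renormalizing prefactors $(u_5u_6u_7/u_0)^k x^{2k} q^{2\binom{k}{2}}$ are dictated by the large-$k$ behavior of $\theta$-factors evaluated at $p^\epsilon = xq^k$, using the identity $\theta(q^k y;q) = \theta(y;q)(-1/y)^k q^{-\binom{k}{2}}$ exactly as in the proof of the first Proposition of this section.

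For the first limit ($\alpha = (-\epsilon,0,0,0,0,\epsilon,1,1)$) I would begin with the symmetry-broken form \eqref{eqsymbreak3spec}, specializing the three $s_i$ so that $s_0=t_3$, $s_1=t_4$, and the third variable is chosen to expose the $u_3\leftrightarrow u_4$ symmetry apparent in the answer. Because $\alpha_3=\alpha_4=0$, the two sequences of poles $z=u_3 q^n$ and $z=u_4 q^n$ remain near the unit circle and, after rescaling $z\to p^{-\epsilon}z$, become the poles whose residues have to be collected; since $\alpha_6=\alpha_7=1$, the factors $\Gamma(pt_6 z^{\pm 1})\Gamma(pt_7 z^{\pm 1})$ limit to the $q$-Pochhammer symbols that appear in the ${}_8W_7$ coefficients. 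The resulting two one-sided series of residues are precisely the two ${}_8W_7$'s interchanged by $u_3\leftrightarrow u_4$, and the condition $|u_0u_5|<1$ matches the argument of the ${}_8W_7$, giving the required convergence.

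For the second limit ($\alpha = w_{01}+\epsilon(\rho-e_0-e_2-e_3-e_4)$) I would instead start from \eqref{eqsymbreak2spec} with $s_0=t_0$, $s_1=t_1$ and some $w$, and rescale $z\to p^{\alpha_0}z$ as before. The key new feature is that this direction simultaneously activates two kinds of resonant pole sequences: a \emph{bilateral} one at $z=u_0 p^{-\epsilon}q^n$ for $-k\le n\le k$, whose residues contribute the ${}_4\psi_4$ (in complete analogy with the ${}_8\psi_8$ limit), and a \emph{one-sided} one at $z=u_1 q^n$ for $n\ge 0$, whose residues contribute the ${}_4\phi_3$. The two distinct ratios of $\theta$-functions appearing in the prefactors, with $\theta(u_0^2/x;q)$ and $\theta(u_0u_r/x;q)$ respectively, reflect these two sources and their different $k$-asymptotics.

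The main obstacle, explicitly deferred by the authors in analogous earlier arguments, is the uniform estimate: one must bound the integrand on a suitably deformed $p$-independent contour by an $L^1$ function and each residue by a sequence summable in $k$, in order to justify dominated convergence. These bounds are obtained from the asymptotics of $\theta(\,\cdot\,;q)$ and of the elliptic gamma function via the difference equation \eqref{eqellgammadiff}, precisely as in the proof of Proposition~\ref{propsum} and the preceding ${}_8\psi_8$ proposition. The bookkeeping is more intricate here because two different resonant sequences have to be tracked simultaneously in the second limit, and because some $p$-dependent residue sequences must be shown to yield only subleading contributions; the argument itself, however, is a straightforward but lengthy extension of the earlier ones.
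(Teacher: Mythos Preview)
Your strategy is essentially the paper's: for the first limit it too uses \eqref{eqsymbreak3spec} with three specializations, and for the second \eqref{eqsymbreak2spec} with two, in each case shifting $z$ by a power of $p$ and then collecting the residues that cross the contour, with the second limit indeed splitting into a bilateral $u_0$-sequence (the ${}_4\psi_4$) and a one-sided $u_1$-sequence (the ${}_4\phi_3$).

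The concrete choices you make differ from the paper's, however, and some of them are inaccurate. For the first limit the paper keeps the default specialization $s_r=t_r$, $r=0,1,2$, shifts $z\to p^{\epsilon}z$ (not $p^{-\epsilon}z$), and picks up the residues at $z=p^{\epsilon}q^n/u_3$ and $z=p^{\epsilon}q^n/u_4$ rather than at $u_3q^n$, $u_4q^n$; the remark about breaking the $u_1,u_2\leftrightarrow u_3,u_4$ symmetry refers to this choice of residue sequences, not to a different specialization of the $s_i$. For the second limit the paper shifts $z\to p^{-1/2+\epsilon}z$ (not $p^{\alpha_0}z=p^{-1/2-\epsilon/2}z$), with residues at $z=p^{-3\epsilon/2}u_0q^n$ and $z=p^{-\epsilon/2}u_1q^n$; in the original variable these are exactly the genuine poles $t_0q^n$ and $t_1q^n$, whereas the points $u_0p^{-\epsilon}q^n$ and $u_1q^n$ you name are not poles of the integrand under either shift. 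These are bookkeeping slips rather than a missing idea, but they would have to be corrected before the residue-and-bound argument actually goes through.
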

\begin{proof}
For the f\/irst limit, use the symmetry broken integral
\eqref{eqsymbreak3spec} with three specializations, shift $z\to
p^{\epsilon}z $ and pick up the residues at $z=p^{\epsilon} q^n/u_3$ and
$z=p^{\epsilon}q^n/u_4$. We arbitrarily broke the symmetry between $u_1$
and $u_2$, with $u_3$ and $u_4$ here to be able to write this as the sum of
only two series.

The second limit is obtained by picking up the residues at
$z=p^{-3\epsilon/2} u_0q^n$ (for the ${}_4\psi_4$) and $z=p^{-\epsilon/2}
u_1 q^n$ (for the ${}_4\phi_3$) in the symmetry broken integral
\eqref{eqsymbreak2spec} with two specializations, with $z\to
p^{-1/2+\epsilon}$.
\end{proof}

Now we have obtained these limits we can obtain relations for these
functions as before. In particular we obtain the symmetries of these
functions (the symmetry group, the stabilizer of~$\alpha$ in~$W(E_7)$, is
isomorphic to $W(A_5)$),
and we obtain transformation formulas relating all these six functions in
terms of each other. This includes the formula \cite[(III.38)]{GandR}
expressing an ${}_8\psi_8$ in two very-well-poised ${}_8\phi_7$'s.

As mentioned before the big dif\/ference between the limits inside the
polytope and those outside is that inside we do not need to specialize $p$
to a geometric progression. In particular the functions just obtained do
depend non-trivially on the parameter $x$; $x$ is not just a cosmetic
factor necessary to calculate the limit as $w$ was in Proposition
\ref{propsumintegral}. The simplest way to see this is to specialize to an
evaluation formula. In the elliptic beta integral $E^1$ we reduce to~$E^0$
if the product of two parameters equals $pq$. Thus we must f\/ind a pair $r$,
$s$ such that $\alpha_r+\alpha_s=1$ and set $u_ru_s=q$. This is not
possible for all of the limits, but for example in the case $\alpha =
(-\epsilon/2,-\epsilon/2,-\epsilon/2,\epsilon/2,\epsilon/2,\epsilon/2,
1-\epsilon/2,1+\epsilon/2)$ we can set $u_5u_6=q$. The limit for the
evaluation formula works in precisely the same way, and we obtain
\begin{gather*}
(q/u_0u_7,q/u_1u_7,q/u_2u_7;q) \theta(q/xu_3u_7,q/xu_4u_7;q)
\\ \qquad{} =(q;q) \prod_{r=0}^2 \prod_{s=3}^5 (u_ru_s;q)
\int \frac{\theta(u_0u_1u_2/zx,u_7x/z;q) (q/u_7z;q)}{(u_0/z,u_1/z,u_2/z,u_3z,u_4z;q)} \frac{dz}{2\pi i z}.
\end{gather*}
The left hand side is clearly non-trivially dependent on $x$, while the
right hand side is the (rescaled) limit for $\alpha$ specialized in
$u_5u_6=q$.

\newpage

\begin{landscape}
\appendix

\section[The $m=1$ limits]{The $\boldsymbol{m=1}$ limits}\label{secapp1}

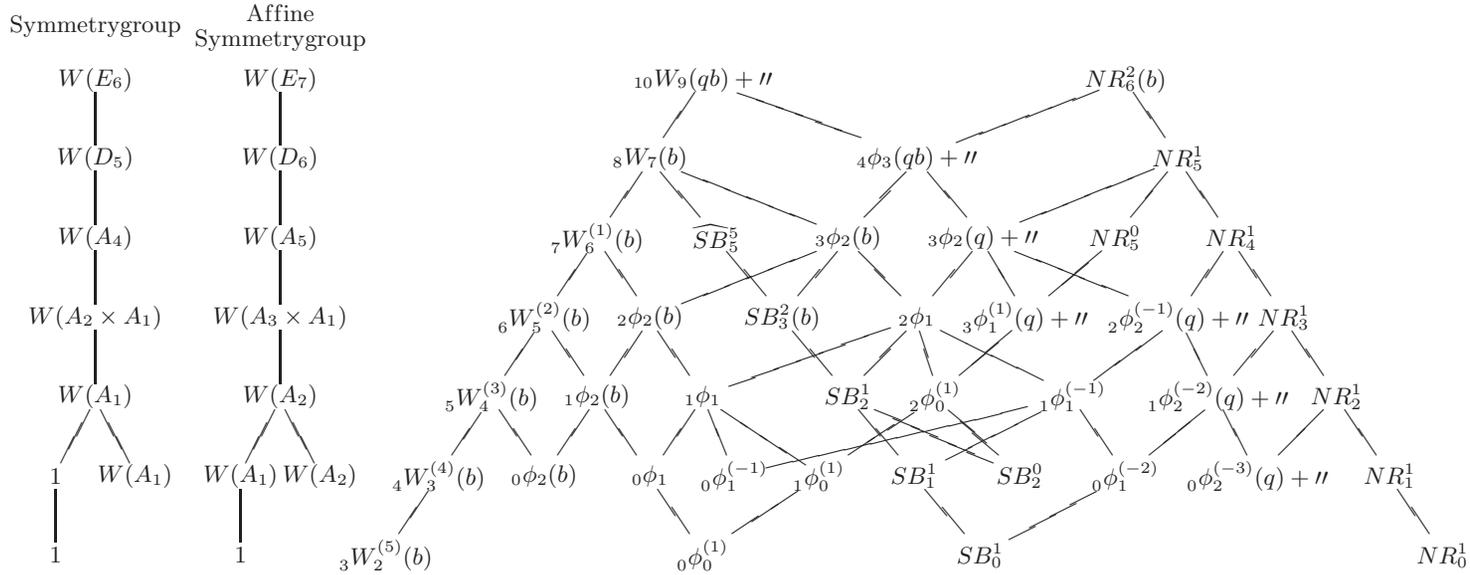
\begin{figure}[h]
\begin{center}
\begin{picture}(500,220)(-110,30)

\footnotesize


\mput{0,30}{$\rWsn{3}{2}{5}(b)$} 
\mput{120,30}{$\rphisn{0}{0}{1}$} 
\mput{225,30}{$SB_0^1$} 
\mput{400,30}{$NR_0^1$} 

\connect{400}{380}{30}{8}{5}
\connect{225}{280}{30}{6}{6}
\connect{225}{200}{30}{8}{5}
\connect{120}{164}{30}{8}{6}
\connect{120}{100}{30}{8}{5}
\connect{0}{20}{30}{8}{6}

\mput{20,60}{$\rWsn{4}{3}{4}(b)$} 
\mput{60,60}{${}_0\phi_2(b)$} 
\mput{100,60}{${}_0\phi_1$} 
\mput{132,60}{$\rphisn{0}{1}{-1}$} 
\mput{164,60}{$\rphisn{1}{0}{1}$} 
\mput{200,60}{$SB_1^1$} 
\mput{240,60}{$SB_2^0$} 
\mput{280,60}{$\rphisn{0}{1}{-2}$} 
\mput{330,60}{$\rphisn{0}{2}{-3}(q)+\ditto$}
\mput{380,60}{$NR_1^1$} 

\connect{380}{360}{60}{8}{5}
\connect{330}{360}{60}{8}{5}
\connect{330}{315}{60}{8}{5}
\connect{280}{315}{60}{8}{5}
\connect{280}{260}{60}{8}{5}
\connect{240}{208}{60}{8}{5}
\connect{240}{175}{60}{5}{5}
\connect{200}{260}{60}{6}{6}
\connect{200}{175}{60}{8}{5}
\connect{164}{208}{60}{8}{6}
\connect{164}{120}{60}{4}{5}
\connect{132}{260}{60}{3}{4}
\connect{132}{120}{60}{8}{5}
\connect{100}{120}{60}{8}{5}
\connect{100}{80}{60}{8}{5}
\connect{60}{80}{60}{8}{5}
\connect{60}{40}{60}{8}{5}
\connect{20}{40}{60}{8}{6}

\mput{40,90}{$\rWsn{5}{4}{3}(b)$} 
\mput{80,90}{${}_1\phi_2(b)$} 
\mput{120,90}{${}_1\phi_1$} 
\mput{175,90}{$SB_2^1$} 
\mput{208,90}{$\rphisn{2}{0}{1}$} 
\mput{260,90}{$\rphisn{1}{1}{-1}$} 
\mput{315,90}{$\rphisn{1}{2}{-2}(q) + \ditto$}
\mput{360,90}{$NR_2^1$} 

\connect{360}{340}{90}{8}{5}
\connect{315}{340}{90}{8}{5}
\connect{315}{300}{90}{8}{6}
\connect{260}{300}{90}{6}{5}
\connect{260}{201}{90}{6}{5}
\connect{208}{242}{90}{8}{5}
\connect{208}{201}{90}{8}{5}
\connect{175}{201}{90}{8}{5}
\connect{175}{150}{90}{8}{5}
\connect{120}{201}{90}{4}{4}
\connect{120}{100}{90}{8}{5}
\connect{80}{100}{90}{8}{5}
\connect{80}{60}{90}{8}{5}
\connect{40}{60}{90}{8}{6}

\mput{60,120}{$\rWsn{6}{5}{2}(b)$} 
\mput{100,120}{${}_2\phi_2(b)$} 
\mput{150,120}{$SB_3^2(b)$} 
\mput{201,120}{${}_2\phi_1$}													
\mput{242,120}{$\rphisn{3}{1}{1}(q) + \ditto$}
\mput{300,120}{$\rphisn{2}{2}{-1}(q) + \ditto$}
\mput{340,120}{$NR_3^1$}																						

\connect{340}{320}{120}{8}{5}
\connect{300}{320}{120}{8}{5}
\connect{300}{226}{120}{8}{5}
\connect{242}{276}{120}{8}{5}
\connect{242}{226}{120}{8}{5}
\connect{201}{226}{120}{8}{5}
\connect{201}{175}{120}{8}{5}
\connect{150}{175}{120}{8}{5}
\connect{150}{125}{120}{8}{5}
\connect{100}{175}{120}{5}{5}
\connect{100}{80}{120}{8}{5}
\connect{60}{80}{120}{8}{6}

\mput{80,150}{$\rWsn{7}{6}{1}(b)$}																			
\mput{125,150}{$\widehat{SB}{}^5_5$}																		
\mput{175,150}{${}_3\phi_2(b)$}												
\mput{226,150}{${}_3\phi_2(q) +\ditto$}
\mput{276,150}{$NR_5^0$}																						
\mput{320,150}{$NR_4^1$}																						

\connect{320}{300}{150}{8}{5}
\connect{276}{300}{150}{8}{5}
\connect{226}{300}{150}{6}{4}
\connect{226}{201}{150}{8}{5}
\connect{175}{201}{150}{8}{5}
\connect{175}{100}{150}{5}{5}
\connect{125}{100}{150}{8}{5}
\connect{80}{100}{150}{8}{5}

\mput{100,180}{${}_8W_7(b)$}														
\mput{201,180}{${}_4\phi_3(qb) + \ditto 
$}									
\mput{300,180}{$NR_5^1$} 																					

\connect{300}{280}{180}{8}{5}
\connect{201}{280}{180}{6}{4}
\connect{201}{120}{180}{8}{5}
\connect{100}{120}{180}{8}{5}

\mput{120,210}{${}_{10}W_9(qb) + \ditto 
$}					
\mput{280,210}{$NR_6^2(b)$}														

\mput{-125,30}{$1$}
\mput{-95,60}{$W(A_1)$} \mput{-125,60}{$1$}
\mput{-110,90}{$W(A_1)$}
\mput{-110,120}{$W(A_2 \times A_1)$}
\mput{-110,150}{$W(A_4)$}
\mput{-110,180}{$W(D_5)$}
\mput{-110,210}{$W(E_6)$}

\connect{-125}{-125}{30}{5}{5}
\connect{-95}{-110}{60}{5}{5}
\connect{-125}{-110}{60}{5}{5}
\connect{-110}{-110}{90}{5}{5}
\connect{-110}{-110}{120}{5}{5}
\connect{-110}{-110}{150}{5}{5}
\connect{-110}{-110}{180}{5}{5}

\mput{-110,230}{Symmetrygroup}

\mput{-40,235}{Af\/f\/ine}
\mput{-40,225}{Symmetrygroup}

\mput{-40,210}{$W(E_7)$}
\mput{-40,180}{$W(D_6)$}
\mput{-40,150}{$W(A_5)$}
\mput{-40,120}{$W(A_3 \times A_1)$}
\mput{-40,90}{$W(A_2)$}
\mput{-25,60}{$W(A_2)$} \mput{-55,60}{$W(A_1)$}
\mput{-55,30}{$1$}

\connect{-55}{-55}{30}{5}{5}
\connect{-25}{-40}{60}{5}{5}
\connect{-55}{-40}{60}{5}{5}
\connect{-40}{-40}{90}{5}{5}
\connect{-40}{-40}{120}{5}{5}
\connect{-40}{-40}{150}{5}{5}
\connect{-40}{-40}{180}{5}{5}

\normalsize

\end{picture}
\end{center}
\caption{The simplicial faces of $P^{(1)}$.}\label{degscheme1}

\end{figure}
Fig.~\ref{degscheme1} shows the functions associated to the ($S_8$-orbits
of) simplicial faces of $P^{(1)}$. We connect two faces by an edge if one
is a facet of the other; the degenerations of a given function are those
connected to it by a downward path in the graph.

To simplify the picture, we omitted the non-simplicial faces, i.e., the
interior together with the facets cut out by equations of the form
$\alpha_0+\alpha_1+\alpha_2+\alpha_3=0$ or $\alpha_7=1+\alpha_0$. Note
that these non-simplicial faces all correspond to evaluations.

In the scheme we used the following abbreviations for integrals
\begin{gather*}
NR_{a}^b = \int \frac{(z^{\pm 2};q) \prod\limits_{r=1}^b (w_r z^{\pm 1};q)}{\prod\limits_{r=1}^a (v_rz^{\pm 1};q)} \frac{dz}{2\pi i z}, \qquad
SB^b_a = \int \theta(u/z;q) \frac{\prod\limits_{r=1}^b (w_rz;q)}{\prod\limits_{r=1}^a (v_rz;q)}
(1-z^2)^{1_{\widehat{SB}}} \frac{dz}{2\pi i z}.
\end{gather*}
Here $1_{\widehat{SB}}$ denotes 1 if we consider $\widehat{SB}$ and 0 otherwise. We write $\rphis{r}{s} + \ditto$ or ${}_r W_{r-1}+\ditto$ to
denote the sum of two series with related coef\/f\/icients (as in Proposition~\ref{propsymbreak}).

After series we write $(q)$ if $z=q$ in the series, $(b)$ if the balancing condition $z \prod a_r = \prod b_r$ should hold, and
$(qb)$ if the series is balanced (i.e.\ both the above properties hold).
Similarly $S_3^2(b)$ and $NR_6^2(b)$ indicate that a balancing condition holds amongst their parameters.

We want to stress that all functions are entire in their parameters. In
particular for the non-conf\/luent series without the condition $z=q$ (which
might fail to converge) we have an integral representation which gives an
analytic extension to all values of $z$.

\begin{figure}[h]
\begin{center}
\setlength{\unitlength}{1.25pt}
\begin{picture}(400,160)

\mput{210,0}{$SB_0^0$} 
\mput{390,0}{$NR_0^0$} 

\connect{390}{360}{0}{8}{5}
\connect{210}{240}{0}{8}{5}
\connect{210}{180}{0}{8}{5}

\mput{0,30}{$\rWsn{3}{2}{3}(b)$} 
\mput{60,30}{${}_0\phi_1(b)$} 
\mput{120,30}{${}_0\phi_0$} 
\mput{180,30}{$SB_1^0$} 
\mput{240,30}{$\rphisn{0}{0}{-1}$} 
\mput{300,30}{$\rphisn{0}{1}{-2}(q) +\ditto$}
\mput{360,30}{$NR_1^0$} 

\connect{360}{330}{30}{8}{5}
\connect{300}{330}{30}{8}{5}
\connect{300}{270}{30}{8}{5}
\connect{240}{270}{30}{8}{5}
\connect{240}{210}{30}{8}{5}
\connect{180}{210}{30}{8}{5}
\connect{180}{150}{30}{8}{5}
\connect{120}{210}{30}{5}{5}
\connect{120}{90}{30}{8}{5}
\connect{60}{90}{30}{8}{5}
\connect{60}{30}{30}{8}{5}
\connect{0}{30}{30}{8}{5}

\mput{30,60}{$\rWsn{4}{3}{2}(b)$} 
\mput{90,60}{${}_1\phi_1(b)$} 
\mput{150,60}{$SB_2^1(b)$} 
\mput{210,60}{${}_1\phi_0$}													
\mput{270,60}{$\rphisn{1}{1}{-1}(q) +\ditto$}
\mput{330,60}{$NR_2^0$}																						

\connect{330}{300}{60}{8}{5}
\connect{270}{300}{60}{8}{5}
\connect{270}{240}{60}{8}{5}
\connect{210}{240}{60}{8}{5}
\connect{210}{180}{60}{8}{5}
\connect{150}{180}{60}{8}{5}
\connect{150}{120}{60}{8}{5}
\connect{90}{180}{60}{5}{5}
\connect{90}{60}{60}{8}{5}
\connect{30}{60}{60}{8}{5}

\mput{60,90}{$\rWsn{5}{4}{1}(b)$}																			
\mput{120,90}{$\widehat{SB}{}^3_3$}																		
\mput{180,90}{${}_2\phi_1(b)$}												
\mput{240,90}{${}_2\phi_1(q) + \ditto$}
\mput{300,90}{$NR_3^0$}																						

\connect{300}{270}{90}{8}{5}
\connect{240}{270}{90}{8}{5}
\connect{240}{210}{90}{8}{5}
\connect{180}{210}{90}{8}{5}
\connect{180}{150}{90}{8}{5}
\connect{120}{150}{90}{8}{5}
\connect{60}{150}{90}{5}{5}

\mput{150,120}{${}_6W_5(b)$}														
\mput{210,120}{${}_3\phi_2(qb) +\ditto$}
\mput{270,120}{$NR_4^0$} 																					

\connect{270}{240}{120}{8}{5}
\connect{210}{240}{120}{8}{5}
\connect{210}{180}{120}{8}{5}
\connect{150}{180}{120}{8}{5}

\mput{180,150}{${}_{8}W_7(qb) +\ditto$}
\mput{240,150}{$NR_5^1(b)$}														

\end{picture}
\end{center}
\caption{The simplicial faces of $P_{\rm I}^{(0)}$, $P_{\rm II}^{(0)}$ and $P_{\rm III}^{(0)}$.}\label{degscheme2}
\end{figure}
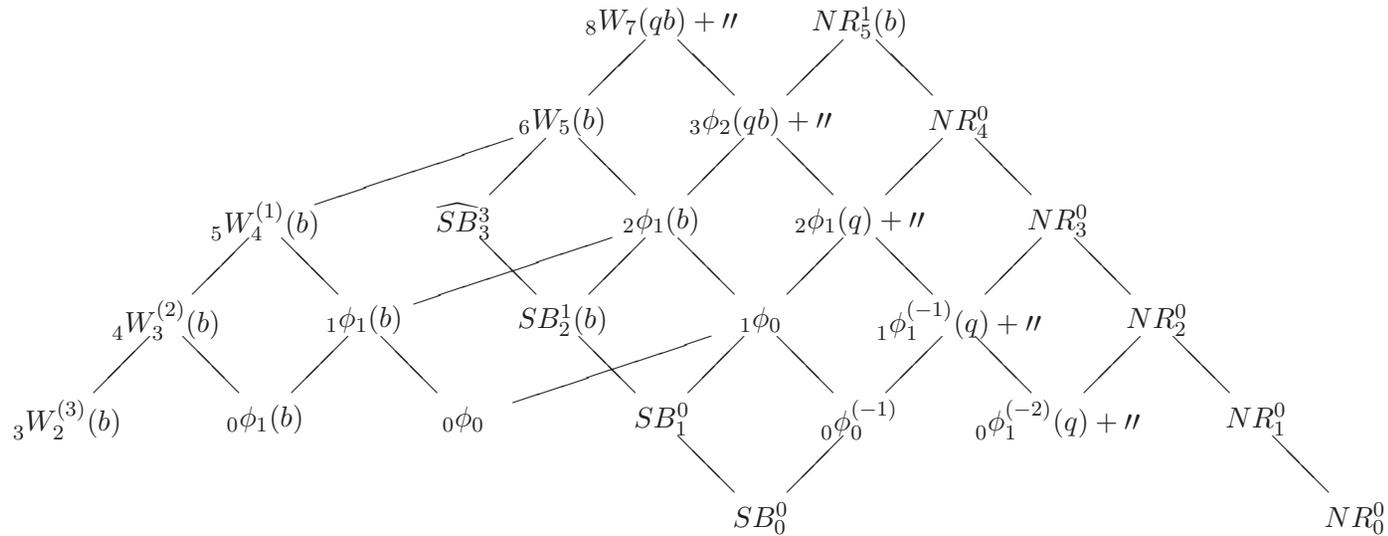

In Fig.~\ref{degscheme2} we have depicted the functions corresponding to
the simplicial faces of $P_{\rm I}^{(0)}$, $P_{\rm II}^{(0)}$, $P_{\rm III}^{(0)}$,
i.e.\ those functions for which we obtain an evaluation formula. The only
non-simplicial face of these three polytopes is the interior of $P_{\rm II}$
(on which the function given by the relevant list of functions, Proposition
\ref{propsum}, equals 1 identically). The faces of $P_{\rm I}$ are those which
have $NR_{0}^0$ as a limit; i.e., the limits of the form $NR^{*}_*$.
Similarly, the faces of $P_{\rm III}$ are those which have $SB_0^0$ as a limit,
and any function not of the form $NR_0^0$ or $SB$ corresponds to a
simplicial face of $P_{\rm II}$.

\end{landscape}

\subsection*{Acknowledgements}

The second author was supported in part by NSF grant DMS-0833464.

\pdfbookmark[1]{References}{ref}
\LastPageEnding

\end{document}